\newcommand{\N}{{\mathbb{Z}} _{> 0} }
\newcommand{\Zp} {\Z _ {\ge 0} }
\newcommand{\Z}{\mathbb{Z}}
\newcommand{\C}{\mathbb{C}}
\providecommand{\U}[1]{\protect\rule{.1in}{.1in}}
\newcommand{\HVir}{\mathcal H}
\newtheorem{theorem}{Theorem}[section]
\newtheorem*{theorem*}{Theorem}
\newtheorem{corollary}[theorem]{Corollary}
\newtheorem{lemma}[theorem]{Lemma}
\newtheorem{remark}[theorem]{Remark}
\newtheorem{definition}[theorem]{Definition}
\newtheorem{proposition}[theorem]{Proposition}
\DeclareMathOperator{\Ker}{Ker}
\DeclareMathOperator{\Img}{Im}
\newcommand{\bea}{\begin{eqnarray}}
\newcommand{\eea}{\end{eqnarray}}
\begin{document}
\keywords{Heisenberg--Virasoro algebra, logarithmic representations, Whittaker modules, self-dual modules, singular vectors}
\title[{Self-dual and logarithmic representations } ]{Self-dual and logarithmic representations of the twisted Heisenberg--Virasoro algebra at level zero}
\subjclass[2000]{ Primary 17B69, Secondary 17B67, 17B68, 81R10}
\author{Dra\v zen Adamovi\' c }
\curraddr{Department of Mathematics, Faculty of Science, University of Zagreb, Bijeni\v cka 30, 10 000
Zagreb, Croatia}
\email{adamovic@math.hr}
\author{Gordan Radobolja }
\curraddr{Faculty of Science, University of Split, Ru\dj era Bo\v{s}kovi\'{c}a 33,
21 000 Split, Croatia }
\email{gordan@pmfst.hr}
\date{}
\maketitle

\begin{abstract}
This paper is a continuation of \cite{AR1}. We present certain new applications and generalizations of the free field realization of the twisted Heisenberg--Virasoro algebra $\HVir$ at level zero.
  We find explicit formulas for singular vectors in certain Verma modules. A free field realization of self-dual modules for $\HVir$ is presented by combining a bosonic construction of Whittaker modules from \cite{ALZ} with a construction of logarithmic modules for vertex algebras. As an application, we prove that there exists a non-split self-extension of irreducible self-dual module which is a logarithmic module of rank two. 
  We construct a large family of logarithmic modules containing  different types of highest weight modules as subquotients. We believe that these logarithmic modules are related with projective covers of irreducible modules in a suitable category of $\HVir$--modules.
\end{abstract}

\baselineskip=14pt
\newenvironment{demo}[1]{\vskip-\lastskip\medskip\noindent{\em#1.}\enspace
}{\qed\par\medskip}

 \section{Introduction}
The twisted Heisenberg--Virasoro Lie algebra $\HVir$ is an important example of a Lie algebra whose associated vertex algebra has many applications in the representation theory and conformal field theory. If the level of the corresponding Heisenberg vertex subalgebra is non-zero, the Heisenberg--Virasoro vertex algebra is isomorphic to the tensor product of Heisenberg vertex algebra and the (universal or simple) Virasoro vertex algebra (cf.\ \cite{ACKP},\cite{CW}). The study of the twisted Heisenberg--Virasoro algebra at level zero was initiated by Y.\ Billig \cite{Billig} motivated by applications to the toroidal Lie algebras. New results on the representation theory were obtained in recent papers \cite{AR1},  \cite{JJ},  \cite{R-2013}.
 
 In this paper we continue our study of free field realization of the twisted Heisenberg--Virasoro algebra from \cite{AR1}. Our main motivation is to present free field realization of self-dual modules and certain Verma modules which we were unable to construct using methods from \cite{AR1}.

 Let $\HVir$ be the twisted Heisenberg--Virasoro algebra at level zero. Let $V^{\HVir}(h,h_{I})$ (resp.\ $L^{\HVir}(h,h_{I})$) denote the Verma module (resp.\ the irreducible highest weight module) with highest weight $(c_{L},c_{I},c_{L,I},h,h_{I})$ such that $c_I=0$, $c_{L,I}\neq 0$ is fixed but arbitrary, and with highest weight vector $v_{h, h_I}$. Then $V^{\HVir}(h,h_{I})$ is reducible if and only if $h_I /c_{L,I} -1 \in {\Z} \setminus \{ 0\}$ (cf.\ \cite{Billig}).
 
A free field realization of irreducible highest weight modules for the twisted Heisenberg--Virasoro algebra at level zero was presented by the authors in \cite{AR1}. Using free field realization we calculated the fusion rules for non-generic irreducible modules i.e., for those irreducible modules which are not isomorphic to a Verma $\HVir$--module. In our approach, the screening operator $Q$ introduced in \cite[Section 2]{AR1} has played an important role. In particular, the singular vector in $V^{\HVir}(h,h_I)$ in the case 
 $$ h_I / c_{L,I} -1 = p \in \Z_{> 0} $$
 is expressed as $Q v_{h + p , h_I } = S_p (c) v_{h,h_I}$ where $S_p(c)$ is $p^{th}$--Schur polynomial in variables $(c(-1), c(-2), \cdots)$ defined by  (\ref{schur-def})  where  $c = - I / c_{L,I}$: $$S_p(c)=S_p(c(-1),\ldots,c(-p))=S_p\left(\frac{-1}{c_{L,I}}I(-1),\ldots,\frac{-1}{c_{L,I}}I(-p)\right)$$ (see (\ref{Heis}) and \cite[Theorem 4.3]{AR1}).
 
 In the present paper we found a similar approach for the singular vector in the case
\bea h_I / c_{L,I} -1 = -p \quad ( p \in \Z_{> 0} ).\label{case-drugi} \eea
The formula is 
\bea
&&   \sum_{i = 1}^{p}   \left(   L(-i) S_{p-i} (- c) \right) v_{h,h_I}  +  \left( h  +   \frac{c_L- 2}{24} (p-1)  \right ) S_p (-c)   v_{h,h_I}   \nonumber \\   &&  -  \frac{c_L- 26}{24} \left(  \sum_{i = 1}  ^{p} (i-1) c(-i )  S_{p-i} (-c) \right)  v_{h,h_I}. \label{singular-drugi}
 \eea

Verma modules $V^{\HVir}(h, h_I)$ in the case (\ref{case-drugi}) are explicitly constructed in Section \ref{verma} by using certain deformation studied previously in \cite{AM-Selecta}. Singular vector (\ref{singular-drugi}) admits a nice interpretation as an element $e^{ \frac{p-1}{2} d^2 + \frac{1-r}{2} c + c}$ of the group algebra in the explicit lattice realization (cf.\ Theorem \ref{verma-realization}).
 
It turned out that the methods from \cite{AR1} do not provide a free field realization of self-dual modules $L^{\HVir} (h, h_I)$ such that $h \ne \frac{c_L-2}{24}$,  $h_I = c_{L,I}$. On the other hand, the same paper contains certain results on vanishing of the fusion rules in the category of modules which contains self-dual module $L^{\HVir} (h, h_I)$ as above (see \cite[Theorem 5.4(3), Remark 6.5]{AR1}). In order to understand these fusion rules properties from the vertex--algebraic point of view, one needs to find a bosonic realization of self-dual modules. 
We shall see that the realization includes both the concepts of Whittaker and logarithmic modules for vertex algebras. We shall prove that the Whittaker module $\Pi_{\lambda}$ for the vertex algebra $\Pi(0)$ introduced in \cite{ALZ}, after certain logarithmic deformation (cf.\ Theorem \ref{deformation}) becomes a highly reducible $\HVir$--module $\widetilde \Pi_{\lambda}$ which contains $L^{\HVir} (h, h_I)$ as a submodule. Since $\widetilde \Pi_{\lambda}$ is not a module for the Heisenberg vertex algebra $M(1)$, it is clear that such module could not have appeared in the fusion rules analysis made in \cite{AR1}.

\vskip 5mm

In Section \ref{review-ar} we recall from \cite{AR1} a free field realization of $\HVir $, the definition of vertex algebra $\Pi(0)$ and its modules $\Pi(p,r)$. We study an extension of the Heisenberg--Virasoro vertex algebra $\overline{\Pi(0)} \subset \Pi(0)$ and present a structure of $\overline{\Pi(0)}$--modules $\Pi(p,r)$ in Section \ref{extension}. By using certain relation in $\Pi(0)$--modules we recover formula \ref{singular-drugi} in Section \ref{singv}. Then we consider a deformed action of $\HVir$ on these modules  and obtain a family of modules $\widetilde{ \Pi(p,r) }$ [cf.\ Theorem  \ref{deformation}, 
Theorem \ref{p-neg}]
with the following properties:
\begin{itemize}
\item $\widetilde{\Pi(p,r)}$ is a logarithmic  $\HVir$--module with the following action of the element 
$\widetilde{L(0)}$ of the Virasoro algebra:
\begin{itemize}
	\item ${\C}[\widetilde{L(0)} ] v$ is finite--dimensional for every $v \in \widetilde{\Pi(p,r)}$, $p \ge 1$,
	\item ${\C}[\widetilde{L(0)} ] v$ is infinite--dimensional for every  $v \in \widetilde{\Pi(p,r)}$, $p \le 0$.
 \end{itemize}
 \item For $p \ge 1$,  $\widetilde{ \Pi(p,r)} $ admits a filtration 
 $$\widetilde{ \Pi(p,r) }\cong \bigcup_{m \ge 0} Z  ^{(m)} , $$ such that $ Z  ^{(m )} $ is a logarithmic  $\HVir$--module of rank $m+1$ and 
  $ Z  ^{(m)} /  Z  ^{(m-1)}$ is a weight $\HVir$--module. [cf.\ Theorem \ref{filtracija-2}]
 \end{itemize}

In Section \ref{whittaker} we consider a deformed action on Whittaker $\Pi(0)$--modules and obtain a realization of self-dual $\HVir$--modules $L^{\HVir}(h,c_{L,I})$ , $h \ne \frac{c_L-2}{24}$ [cf.\ Theorem  \ref{self-dual-real}]

Finally, we find new applications of our results on the vertex algebra associated to the $W(2,2)$--algebra. We present a non-local bosonic formula (\ref{formula-screening-2}) for the screening operator introduced by the authors in \cite{AR2}.  We hope that our new expression for screening operator can be applied to construction of a quantum group which would play the role of Kazhdan--Lusztig dual of the vertex algebra $W(2,2)$. 

We also discuss a connection of our approach with a  realization of the BMS$_3$--algebra obtained in \cite{BJMN} in the case $c_L = 26$.

\vskip 5mm
{\bf Acknowledgements.}  The authors are partially supported by the Croatian Science Foundation under the project 2634
and by the QuantiXLie Centre of Excellence, a project cofinanced by the Croatian Government and European Union through the European Regional Development Fund - the Competitiveness and Cohesion Operational Programme (Grant KK.01.1.1.01.0004). We thank the referee for his/her  valuable comments.

\section{Results from \cite{AR1}}
\label{review-ar}
In this section we recall from \cite{AR1} the free field realization of the twisted Heisenberg--Virasoro algebra, the construction of vertex algebra $\Pi(0)$ and their modules $\Pi(p,r)$. The definition of the lattice is slightly changed, but the action of the generators of the Heisenberg--Virasoro algebra is the same as in \cite{AR1}. Main results of Section 2 of \cite{AR1} are stated in Proposition \ref{kratka-verzija}. 

We also present new, explicit formulas for sub--singular vectors introduced in \cite[Proposition 2.7]{AR1}.

Recall that the twisted Heisenberg--Virasoro algebra is an infinite--dimensional complex Lie algebra $\HVir$ with basis
$$
\{L(n),I(n):n\in\mathbb{Z}\}\cup\{C_{L},C_{LI},C_{I}\}
$$
and commutation relation:
\bea
&& \left[  L(n),L(m)\right]  =(n-m)L(n+m)+\delta_{n,-m}\frac{n^{3}-n}{12}%
C_{L},\\
&& \left[  L(n),I(m)\right]  =-mI(n+m)-\delta_{n,-m}( n^{2}+n)
C_{LI},\\
 && \left[  I(n),I(m)\right]  =n\delta_{n,-m}C_{I},\\ && \left[  {\HVir} ,C_{L}\right]  =\left[  {\HVir},C_{LI}\right]  =\left[
\mathcal{H},C_{I}\right]  =0.
\eea

Let $V^{\HVir}(c_{L},c_{I},c_{L,I},h,h_{I})$ denote the Verma module with highest
weight $(c_{L},c_{I},c_{L,I},h,h_{I})$, and  $L^{\HVir}(c_{L},c_{I},c_{L,I},h,h_{I})$ its irreducible quotient (cf.\ \cite{Billig}). In this paper we consider the case $c_I=0$. For simplicity we shall denote the Verma module $V^{\HVir}(c_{L}, 0,c_{L,I},h,h_{I})$ with $V^{\HVir} (h,h_{I})$ and its irreducible quotient with $L^{\HVir}( h,h_{I})$.

\vskip 5mm


Define the following hyperbolic lattice $Hyp= {\mathbb{Z}} x  +
{\mathbb{Z}}   y  $ such that
$$
\langle x ,  x \rangle=  \langle  y  ,  y   \rangle = 0, \ \langle
 x , y    \rangle= 1.
$$

Let ${\mathfrak h} = {\C} \otimes_{\Z}  Hyp$ and extend the form $\langle \cdot, \cdot \rangle $ on ${\mathfrak h}$.
We can consider ${\mathfrak h}$ as an abelian Lie algebra. Let $\widehat{\mathfrak h}= {\C}[t,t^{-1}]  \otimes {\mathfrak h} \oplus {\C} K$ be the affinization of ${\mathfrak h}$. Let $\gamma \in {\mathfrak h}$ and consider $\widehat{\mathfrak h}$--module
$$ M(1, \gamma) := U(\widehat{\mathfrak h}) \otimes _{ U ( {\C} [t] \otimes {\mathfrak h} \oplus {\C} K ) } {\C}  $$
where $ t {\C}   [t] \otimes {\mathfrak h} $ acts trivially on ${\C}$, $ {\mathfrak h}$ acts as $ \langle \delta , \gamma \rangle $ for $\delta \in {\mathfrak h} $ and $K$ acts as $1$.
We shall denote the highest weight vector in $M(1, \gamma)$ by $e ^{\gamma}$.

We shall write $M(1)$ for $M(1,0)$.  For $h \in {\mathfrak h}$ and $n \in {\Z}$ we write $h(n)$ for $t ^n \otimes h$. Set $h(z) = \sum_{n \in {\Z} } h(n)  z ^{-n-1}$. Then $M(1)$ is a vertex algebra which is generated by the fields $h(z)$, $h \in {\mathfrak h}$ (cf.\ \cite{LL}. Moreover, $M(1, \gamma)$ for $\gamma \in {\mathfrak h}$, are irreducible $M(1)$--modules.

Let $V_{Hyp } = M(1) \otimes{\mathbb{C}}[Hyp] $ be the vertex algebra associated to the lattice $Hyp$,
where ${\mathbb{C}}[Hyp] $ is the group algebra of $Hyp$.

In this paper we shall consider the lattice $L= {\Z} c + {\Z} d$ such that $c= x$ and $d = 2 y$. Then $V_L$ is a  vertex subalgebra of $V_{Hyp}$.
 
Define the Heisenberg and the Virasoro vector:
\bea
I &= & - c_{L,I} c (-1), \label{Heis}  \\
\omega & = &       \frac{1}{2} c(-1) d (-1) + \frac{ c_L- 2 }{24 } c(-2) -   \frac{1}{2}  d (-2). \label{Vir}
\eea

Then the components of the fields
$$
I(z)=Y(I,z)=\sum_{n\in{\mathbb{Z}}}I(n)z^{-n-1},\quad L(z)=Y(\omega
,z)=\sum_{n\in{\mathbb{Z}}}L(n)z^{-n-2}%
$$
satisfy the commutation relations for the twisted Heisenberg--Virasoro Lie
algebra $\HVir$  and  $I$ and $\omega$ generate the simple Heisenberg--Virasoro vertex
algebra $L^{\HVir} (c_{L},0,c_{L,I},0,0)$ which we shall denote by $L^{\mathcal{H}
}(c_{L},c_{L,I})$.

We have:
\bea
\left[L(n), c(m) \right] &=& -m c(n+m) + (m ^2 -m)  \delta_{m+n,0} \\ 
\left[L(n),  d  (m) \right]   & = &   - m  d  (n+m) - \frac{c_L - 2} {12} (m ^2 -m)  \delta_{m+n,0}
\eea

Let $u = e^{c}$. Then  by \cite[Lemma 2.4]{AR1}
$$
Q= \mbox{Res}_{z} Y(u,z) = u_{0}.
$$
is a screening operator. This means that
$$[Q, I(n)] = [Q, L(n)] = 0 \quad \forall n \in {\Z},$$
i.e., $Q$ commutes with the action   of the Heisenberg-Virasoro algebra.

Recall that screening operators provide an important tool for construction of singular vectors in free-field realizations (cf.\ \cite{TK}).

One can show that
$$
\Pi(0)=M(1)\otimes{\mathbb{C}}[\Z c ]
$$
is a simple vertex algebra (cf.\ \cite{BDT}). Let $Y(\cdot, z) $ be the associated vertex operator. The vertex
operator $Y(u,z)$ and the screening operator  $Q$ are well-defined on every
$\Pi(0)$--module.


Let $$ d^1 =  d + \frac{c_L-26} {12} c, \quad d^2 =  d -\frac{c_L-26} {12 } c. $$
Consider the following  irreducible $\Pi(0)$--modules  $$\Pi(p,r):= \Pi(0)\cdot e ^{ \frac{p-1}{2}  d^2 +   \frac{ 1-r}{2}   c}  \quad \mbox{where} \quad p  \in  \Z, r \in {\C} . $$
(The irreducibility of $\Pi(p,r)$ was also proved in \cite{BDT}).

Let
\bea && h_{p,r} =  (1-p^2) \frac{c_L-26}{24} + 1-p + (1-r) \frac{p}{2}. \nonumber \\
 && W_{p,r } = U(\HVir  )\cdot e^{\frac{p-1}{2} d^2 + \frac{1-r}{2} c }  \subset  \Pi(p,r) . \label{mod-simple} \eea
Set $v_{p,r} = e^{\frac{p-1}{2} d^2 + \frac{1-r}{2} c } $. Note that $h_{p,r+2} = h_{p,r}-p$.
 
The following result is proved in Propositions 2.5 and 2.7 of  \cite{AR1}.
\begin{proposition} \label{kratka-verzija}
We have:
\item[(1)] $W_{p,r } \cong V^{\HVir} (h_{p,r}, (1-p) c_{L, I} )$ if and only if $ p\in {\C} \setminus {\Z}_{\ge 1}$.
\item[(2)] $W_{p,r } \cong L^{\HVir} (h_{p,r}, (1-p) c_{L, I} )$ if  $ p\in  {\Z}_{\ge 1}$.
\item[(3)] For every  $r \in {\C}$,  $W_{0,r}\cong L^{\HVir} (\frac{c_L -2}{24},  c_{L, I} )$.
\end{proposition}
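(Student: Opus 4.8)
The plan is to realize $v_{p,r}$ as a highest weight vector, pass to the induced map out of the Verma module, and detect its kernel by comparing $W_{p,r}$ with the surrounding Fock space. First I would verify, directly in $V_L$, that $v_{p,r}=e^{\frac{p-1}{2}d^2+\frac{1-r}{2}c}$ is a highest weight vector of weight $(h_{p,r},(1-p)c_{L,I})$. Writing $\lambda=\frac{p-1}{2}d^2+\frac{1-r}{2}c$ and $\rho=\frac{c_L-2}{24}c-\frac12 d$, one has $I(0)v_{p,r}=-c_{L,I}\langle c,\lambda\rangle v_{p,r}$ and $L(0)v_{p,r}=\bigl(\tfrac12\langle\lambda,\lambda\rangle-\langle\rho,\lambda\rangle\bigr)v_{p,r}$, which with $\langle c,c\rangle=\langle d,d\rangle=0$ and $\langle c,d\rangle=2$ evaluate to $(1-p)c_{L,I}$ and $h_{p,r}$; moreover $I(n)v_{p,r}=L(n)v_{p,r}=0$ for $n>0$ since $v_{p,r}$ spans the lowest--degree space of $M(1)v_{p,r}$ and these operators strictly lower the degree. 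This produces a surjective homomorphism $\phi\colon V^{\HVir}(h_{p,r},(1-p)c_{L,I})\to W_{p,r}$.

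Because $I(n)$ and $L(n)$ preserve the $\Z c$--grading and fix the charge of $v_{p,r}$, we have $W_{p,r}\subseteq F:=M(1)v_{p,r}=\C[c(-k),d(-k):k\ge1]v_{p,r}$. Both $F$ and the Verma module are free on two families of creation operators, so they share the graded character $\prod_{k\ge1}(1-q^{k})^{-2}$; hence $\phi$ is an isomorphism if and only if $v_{p,r}$ generates all of $F$ under $\HVir$. The decisive point is that in $L(-k)v_{p,r}$ the vector $d(-k)v_{p,r}$ occurs with coefficient $\frac{p-k}{2}$, every other summand being $-c_{L,I}^{-1}I(-j)$ applied to a vector of strictly smaller degree; this coefficient vanishes for some $k\ge1$ exactly when $p\in\Z_{\ge1}$.

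For $p\notin\Z_{\ge1}$ all the coefficients $\frac{p-k}{2}$ are nonzero. Since the $I(-k)$ already produce every $c(-k)$ (they merely adjoin a factor $c(-k)$, as $\langle c,c\rangle=0$), an induction on degree lets me solve for each new $d(-k)$--factor out of $L(-k)$ applied to a known lower--degree vector, every correction term lying in $W_{p,r}$ by the inductive hypothesis. Thus $W_{p,r}=F$ and $W_{p,r}\cong V^{\HVir}(h_{p,r},(1-p)c_{L,I})$, uniformly for non--integral $p$, for $p=0$, and for $p\le-1$ (where the Verma module is reducible by the criterion of \cite{Billig}, yet its singular vector, realized through the screening operator as the nonzero vector $S_{-p}(c)v_{p,r}$, survives). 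Taking $p=0$ gives $h_{0,r}=\frac{c_L-2}{24}$ and $h_I=c_{L,I}$ independently of $r$, and there $h_I/c_{L,I}-1=0$ so the Verma module is already irreducible; hence $W_{0,r}\cong L^{\HVir}(\tfrac{c_L-2}{24},c_{L,I})$, which is (3), and this also gives the forward implication of (1).

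For the converse take $p\in\Z_{\ge1}$. Now $h_I/c_{L,I}-1=-p$, so by \cite{Billig} the Verma module is reducible with a singular vector at level $p$, and the content of formula (\ref{singular-drugi}) is precisely that this vector, evaluated on $v_{p,r}$ through the Virasoro and Heisenberg modes and organized by the Schur polynomials $S_{j}(-c)$, lies in $\ker\phi$. Since that singular vector generates the maximal proper submodule of the Verma module, $\ker\phi$ equals it, so $W_{p,r}\cong L^{\HVir}(h_{p,r},(1-p)c_{L,I})$; as the Verma module is reducible this is not isomorphic to it, giving (2) and the reverse implication of (1). The main obstacle is exactly this last case: one must show that the explicit level--$p$ vector really lies in $\ker\phi$ (equivalently, that the failure to generate the $d(-p)$--direction is not repaired by any other combination) and that it generates the \emph{whole} maximal submodule, so that the quotient is the irreducible module and not a larger subquotient. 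The vanishing is the genuine computation, best organized through the Schur--polynomial identities behind (\ref{singular-drugi}); the identification with $L^{\HVir}$ then follows from the character equality together with the standard fact that the minimal singular vector generates the maximal submodule of such a Verma module.
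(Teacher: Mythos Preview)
The paper does not prove this proposition here; it is quoted from Propositions~2.5 and~2.7 of \cite{AR1}, so there is no in-paper argument to compare against directly. Your outline is sound and would yield a self-contained proof, but two points deserve tightening.

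First, the induction ``on degree'' for $W_{p,r}=F$ when $p\notin\Z_{\ge1}$ is not quite enough as stated. When you move $L(-k)$ through the creation operators of a monomial $M'$ of degree $n-k$ and then let it hit $v_{p,r}$, the vector $L(-k)v_{p,r}$ contributes, besides $\tfrac{p-k}{2}\,d(-k)v_{p,r}$, the cross terms $\tfrac12\,c(-m)d(-(k-m))v_{p,r}$ for $1\le m\le k-1$. Multiplied back by the prefactors of $M'$, these give monomials of the \emph{same} degree $n$ and the \emph{same} number of $d$-factors as your target $M$, so a plain degree induction does not close. A lex order on (degree, number of $d$-factors, sum of $d$-indices) does: every correction term is then strictly smaller than $M$, and the argument goes through.

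Second, for $p\in\Z_{\ge1}$ your appeal to formula~(\ref{singular-drugi}) is legitimate and not circular: its derivation in Section~\ref{singv} (Proposition~\ref{singular-formula}) is a direct computation of $\mathcal Q\,v_{p,r}=0$ in $\Pi(p,r)$ using only the vertex-algebra relation~(\ref{relacija}), not Proposition~\ref{kratka-verzija}. You should, however, make explicit that $\Phi_p(L,c)\cdot v_{h,h_I}\ne0$ in the Verma module --- the summand $L(-p)v_{h,h_I}$ is a PBW basis vector not cancelled by the remaining terms, each of which carries at least one $I$-factor or has smaller leading $L$-index. That is what forces $\ker\phi$ to meet the (one-dimensional) level-$p$ weight space of the maximal submodule, hence to contain the singular vector and all of $M_1$.

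As for the comparison: the argument in \cite{AR1} is organized around the screening operator $Q=e^c_0$ rather than the explicit Schur-polynomial singular vector. For $p\ge1$ one has $Qv_{p,r}=0$ while $Q\,d(-p)v_{p,r}=-2\,v_{p,r-2}\ne0$, so $W_{p,r}\subseteq\Ker_F Q\subsetneq F$; the structure of $\Pi(p,r)$ via cosingular vectors then pins $W_{p,r}$ down as the irreducible quotient. Your route via~(\ref{singular-drugi}) is more explicit about the singular vector but imports the Section~\ref{singv} computation; the screening-operator route trades that computation for kernel analysis.
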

 
The following modules were not constructed in \cite{AR1}:
\begin{itemize}
\item Reducible Verma modules $V^{\HVir} (h_{p,r}, (1-p) c_{L, I} )$ in the case  $ p\in  {\Z}_{\ge 1}$.
\item Self-dual modules  $L^{\HVir} (h, c_{L, I} )$ for $h \ne \frac{c_L-2}{24}$.
\end{itemize}
We shall present the construction of these modules in Sections \ref{verma} and \ref{self-dual}.

We shall also need the following result.
\begin{lemma} \label{sub-sing}
Assume that $p\in\N$. As an ${\HVir}$--module, $\Pi(p,r)$ is generated by a family of ${\HVir}$--singular vectors $\{ v_{p,r-2\ell}  \ \vert \ \ell \in {\Z} \} $ and a family of $\HVir$--cosingular vectors
$\{ v_{p,r-2\ell } ^{(m)}  \ \vert \ \ell, m  \in {\Z}, \ m \ge 1 \}, $ where
 $$ v_{p,r-2\ell} ^{(m)} = \frac{(-1)^m}{ 2^m m!}  d(-p) ^m v_{p,r-2\ell} $$
\end{lemma}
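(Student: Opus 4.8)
The plan is first to split $\Pi(p,r)$ into a direct sum of $\HVir$--stable Fock spaces and reduce to one of them. Writing $\gamma_\ell=\tfrac{p-1}{2}d^2+\tfrac{1-r}{2}c+\ell c$, we have $v_{p,r-2\ell}=e^{\gamma_\ell}$, and since the factor $\C[\Z c]$ of $\Pi(0)$ shifts the lattice part by integer multiples of $c$,
\[
\Pi(p,r)=\bigoplus_{\ell\in\Z}F_\ell,\qquad F_\ell:=M(1)\,e^{\gamma_\ell}.
\]
Both $I=-c_{L,I}c(-1)$ and $\omega$ lie in $M(1)$ and carry no lattice part, so every mode $I(n),L(n)$ is a polynomial in the Heisenberg operators $c(n),d(n)$ and hence preserves each $F_\ell$ (the summands are distinguished by the $d(0)$--eigenvalue, which jumps by $2$ with $\ell$). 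Thus the displayed splitting is one of $\HVir$--modules; since $v_{p,r-2\ell}$ and $v_{p,r-2\ell}^{(m)}=\tfrac{(-1)^m}{2^mm!}d(-p)^mv_{p,r-2\ell}$ all lie in $F_\ell$, it suffices to prove that each $F_\ell$ is generated by $v_{p,r-2\ell}$ together with the $v_{p,r-2\ell}^{(m)}$, $m\ge1$. Translating $r$, I may take $\ell=0$ and abbreviate $v=v_{p,r}$, $v^{(m)}=v_{p,r}^{(m)}$.

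The key is one coefficient. Since $v$ is a highest weight vector for the full rank--two Heisenberg algebra, with $c(n)v=d(n)v=0$ for $n>0$ and $c(0)v=(p-1)v$, the $c$--modes are available at once through $c(-n)=-c_{L,I}^{-1}I(-n)$. For the $d$--modes I read off from $\omega$ that
\[
L(-k)v=\frac{p-k}{2}\,d(-k)v+\tfrac12\sum_{0<i<k}c(-i)\,d(-(k-i))\,v+(\text{scalar})\,c(-k)v ,
\]
so the coefficient of $d(-k)$ equals $\tfrac{p-k}{2}$, which is nonzero exactly when $k\neq p$ and vanishes at $k=p$. Thus, while $d(-k)v$ can be solved for out of $L(-k)v$ whenever $k\neq p$, the strings $d(-p)^m$ cannot be produced this way and must be supplied directly by the generators $v^{(m)}\propto d(-p)^mv$. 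One checks, using $[L(n),d(-p)]=p\,d(n-p)$ for $0<n<p$, that $L(n)v^{(1)},I(n)v^{(1)}\in U(\HVir)v$ for all $n>0$ --- here $d(-j)v\in U(\HVir)v$ for $1\le j\le p-1$ follows by solving the displayed relation recursively, the coefficients $\tfrac{p-j}{2}$ being nonzero --- so $v^{(1)}$ is genuinely cosingular, and similarly each $v^{(m)}$ is cosingular modulo the submodule generated by the lower ones.

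Granting this, the generation statement follows by induction. Let $G$ be the $\HVir$--submodule of $F_0$ generated by $v$ and all $v^{(m)}$; since $G$ is $U(\HVir)$--stable and contains $d(-p)^mv$, it is enough to show $d(-b_1)\cdots d(-b_t)v\in G$ for every multi--index, as the remaining $c$--factors can then be attached via $I(-n)$. I would induct on the pair $(t,\sum_i b_i)$ ordered lexicographically. If all $b_i=p$ the monomial is a multiple of $v^{(t)}\in G$. Otherwise choose an index $b_j\neq p$ and apply $L(-b_j)$ to the monomial with that factor deleted (which lies in $G$ by induction); by the displayed formula and the commutators $[L(-b_j),d(-c)]=c\,d(-(b_j+c))$ this equals $\tfrac{p-b_j}{2}\,d(-b_1)\cdots d(-b_t)v$ plus a sum of terms of strictly smaller $(t,\sum_i b_i)$ --- either of lower $d$--degree or of the same degree but smaller index sum --- all of which lie in $G$ by induction. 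Since $\tfrac{p-b_j}{2}\neq0$, the target monomial lies in $G$, and therefore $G=F_0$.

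The step I expect to require real care is the bookkeeping in this induction. One must verify that \emph{every} correction produced while commuting $L(-b_j)$ to the vacuum --- the mixed terms $c(-i)d(-(b_j-i))v$, the shifts $d(-(b_j+c))$, and the pure $c$--mode terms --- is strictly smaller in the chosen order; in particular a shift $[L(-b_j),d(-c)]=c\,d(-(b_j+c))$ may well create a factor $d(-p)$, but always at strictly lower $d$--degree, so the lexicographic induction still closes. Pinning down the ordering so that $L(-b_j)$ acts triangularly with diagonal entry $\tfrac{p-b_j}{2}$, and confirming that the all--$p$ monomials are precisely the base inputs supplied by the cosingular generators, is where the argument must be assembled carefully; the Fock--space splitting and the computation of $L(-k)v$ are then the routine ingredients.
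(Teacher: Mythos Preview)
Your argument is correct and genuinely different from the paper's. The paper does not prove generation from scratch: it invokes \cite[Proposition~2.7]{AR1}, which already says that $\Pi(p,r)$ is generated by the singular vectors $v_{p,r-2\ell}$ together with \emph{some} cosingular vectors $w^{(m)}_{p,r-2\ell}$ characterized by $Q^m w^{(m)}_{p,r-2\ell}=v_{p,r-2(\ell+m)}$; the only work done here is to check, via the commutators $[d(-k),e^{c}_j]=2e^{c}_{j-k}$, that the explicit vectors $v^{(m)}_{p,r-2\ell}=\tfrac{(-1)^m}{2^m m!}d(-p)^m v_{p,r-2\ell}$ satisfy this $Q$--property. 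You instead split $\Pi(p,r)=\bigoplus_\ell M(1)e^{\gamma_\ell}$ as $\HVir$--modules (valid, since $I(n),L(n)$ are built from Heisenberg modes only), compute the coefficient $\tfrac{p-k}{2}$ of $d(-k)$ in $L(-k)v$, and run a self-contained lexicographic induction on $d$--monomials to prove generation directly, with the all--$p$ monomials supplied by the $v^{(m)}$. Your approach is more elementary and explains transparently why exactly the mode $d(-p)$ is the obstruction; the paper's approach is shorter but depends on \cite{AR1} and, crucially, yields the formula $Q^m v^{(m)}_{p,r-2\ell}=v_{p,r-2(\ell+m)}$ (their~(\ref{Q-sub-sing})), which is used repeatedly later (e.g.\ in Theorems~\ref{filtracija-1} and~\ref{filtracija-2}). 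If you want your version to plug into the rest of the paper, you should separately verify that $Q$--identity; it follows easily from $[d(-p),e^{c}_0]=2e^{c}_{-p}$ and $e^{c}_{-p}v_{p,r-2\ell}=v_{p,r-2(\ell+1)}$. One cosmetic point: avoid using $c$ both for the lattice element and for a running index in $[L(-b_j),d(-c)]$.
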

\begin{proof}
By \cite[Proposition 2.7]{AR1} we have that $\Pi(p,r)$ is generated  as ${\HVir}$--module by a family of singular vectors $\{ v_{p,r-2\ell} \ \vert \ell \in {\Z} \}$ and by a family of cosingular vectors $\{w^ {(m)} _{p, r-2\ell }  \ \vert \ \ell, m \in {\Z}, \ m \ge 1 \}$ satisfying
\bea
\label{Qm}
Q^ m w^ {(m)} _{p, r-2\ell }   = v_{p, r-2(\ell+m)}.
\eea
Let us prove that for cosingular vectors $w^ {(m)} _{p, r-2\ell }$ we may choose $v_{p,r-2\ell } ^{(m)}$. 

Recall the commutator relations in $\Pi (0)$:
\bea
[d(-k), e^c _j ]=2e^c_{j-k}, \qquad [e^c_j,c(-k)]=0, \qquad [e^c_0,e^c_j]=0. \nonumber
\eea
By induction on $m$ we see that $e^c_{-kp}v_{p,r-2\ell}^{(m)}$ is spanned by $S_{(k+i-1)p}(c)v_{p,r-2(\ell+1)}^{(m-i)}$, $ i=k-1,\dots m$ for each $k\in\Zp$. 
Therefore we have
\bea
\label{Q-sub-sing} Q^{j} v_{p,r-2\ell}^{(m)} = 
\begin{cases}
v_{p,r-2(\ell +j)}^{(m-j)} \bmod  \Ker_{\Pi(p,r)}  Q^m, \quad &j<m,\\
v_{p,r-2(\ell +m)}, \quad &j=m.
\end{cases}
\eea
The proof follows.
\end{proof}

See Figure \ref{diag} in the Appendix \ref{app} for structure of $\Pi(0)$--module $\Pi (p,r)$.

\begin{remark}
Note that $\Pi (p,r) = \Pi (p,s)$  if and only if $r-s \in 2 \Z $.
Moreover, $\Pi (1,1) = \Pi(0)$ and $U(\HVir) v_{1,1} \cong L^{\HVir} (c_L,c_{L,I})$.
\end{remark}
 
\section{An extension of the vertex algebra $L^{\HVir} (c_L, c_{L,I})$}{An extension of the Heisenberg--Virasoro vertex algebra} \label{extension}
Now we study modules $\Pi(p,r)$ in more details. There is a vertex subalgebra of $\Pi(0)$ which can be treated as an extension of the vertex algebra $L^{\HVir} (c_L, c_{L,I})$:
$$\overline{\Pi(0)} =\Ker_{\Pi(0)}  Q. $$
In this section we obtain filtrations of $\Pi(p,r)$ by $\overline{\Pi(0)}$--modules such that the subquotients are irreducible over $\overline{\Pi(0)}$.


\begin{proposition} \label{alg-1}
Let $  \Pi(0)  ^{(n)}  =\Ker_{\Pi(0)}  Q ^{n+1}$. Then we have
\begin{itemize}
\item[(1)]  $\overline{\Pi(0)}$ is a vertex subalgebra of $\Pi (0)$ which is as  $L^{\HVir} (c_L, c_{L,I})$--module isomorphic to
$$ \overline { \Pi (0) } = \bigoplus_ {n \in {\Z} }  W_{1,1-2n} \cong \bigoplus_ {n \in {\Z} }  L^{\HVir}  ( n, 0). $$
 
\item[(2)] For every $n \in {\Z}_{\ge 0}$,  $\Pi(0) ^{(n)}$ and $\Pi(0) ^{(n+1)} / \Pi(0) ^{(n)}$ are  $\overline{\Pi(0)}$--modules. Moreover we have 
 $$ \Pi(0) = \bigcup_{n \ge 0}  \Pi(0)  ^{(n)}, \quad  \Pi(0)  ^{(n)} \cdot \Pi(0)  ^{(m )} \subset \Pi(0)  ^{(n+m)}. $$

\end{itemize}
\end{proposition}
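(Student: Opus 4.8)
The plan is to treat $Q = u_0$ as a derivation of the vertex algebra $\Pi(0)$ and to read off every assertion from this single structural fact together with the module decomposition of Lemma \ref{sub-sing}. First I would record that for a zero mode the commutator formula gives $[Q, Y(a,z)] = Y(Qa, z)$, so that $Q(a_k b) = (Qa)_k b + a_k (Qb)$ for all $k$; thus $Q$ is a derivation of every product $a_k b$. Since $Q \mathbf{1} = u_0 \mathbf{1} = 0$ by the creation property, the kernel $\overline{\Pi(0)} = \Ker_{\Pi(0)} Q$ contains the vacuum and is closed under all products, hence is a vertex subalgebra. Because $Q$ commutes with the $\HVir$-action and $\omega = L(-2)\mathbf{1}$, $I = I(-1)\mathbf{1}$, we obtain $Q\omega = L(-2)Q\mathbf{1} = 0$ and $QI = I(-1)Q\mathbf{1} = 0$; therefore $L^{\HVir}(c_L, c_{L,I})$, generated by $\omega$ and $I$, sits inside $\overline{\Pi(0)}$.

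For the module identification in (1) I would use the $c$-charge grading of $\Pi(0) = M(1)\otimes\C[\Z c]$: the fields $I(z)$ and $L(z)$ are built from $c(n)$ and $d(n)$, which preserve the lattice charge, so $\HVir$ acts within each charge space while $Q$ raises the charge by one. The singular vectors are $v_{1,1-2n} = e^{nc}$, and since $\langle c, c\rangle = 0$ the operator $Y(e^c, z)e^{nc}$ carries no negative powers of $z$, whence $Q e^{nc} = 0$. Each $e^{nc}$ thus lies in $\overline{\Pi(0)}$ and generates $W_{1,1-2n} \cong L^{\HVir}(n,0)$ by Proposition \ref{kratka-verzija}(2) (here $h_{1,1-2n} = n$); as these occupy distinct charges, the sum $\bigoplus_{n} W_{1,1-2n}$ is direct. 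By Lemma \ref{sub-sing} the remaining generators are the cosingular vectors $v_{1,1-2n}^{(m)} = \tfrac{(-1)^m}{2^m m!}d(-1)^m e^{nc}$, and (\ref{Q-sub-sing}) shows that $Q$ sends these nontrivially down the chain, so they contribute nothing new to $\Ker Q$; this yields $\overline{\Pi(0)} = \bigoplus_{n\in\Z} W_{1,1-2n}$.

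Part (2) would follow from iterating the derivation property into the higher Leibniz identity
\[
Q^{N}(a_k b) = \sum_{i=0}^{N}\binom{N}{i}(Q^i a)_k (Q^{N-i} b).
\]
Taking $a \in \Ker Q$, $b \in \Ker Q^{n+1}$ and $N = n+1$ kills every term with $i \geq 1$ and leaves $a_k(Q^{n+1}b) = 0$, so $\Pi(0)^{(n)} = \Ker Q^{n+1}$ is stable under all modes of $\overline{\Pi(0)}$, i.e.\ is an $\overline{\Pi(0)}$-module; the inclusions $\Pi(0)^{(n)}\subset \Pi(0)^{(n+1)}$ are immediate, so the quotients are modules as well. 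Taking instead $a \in \Ker Q^{n+1}$, $b \in \Ker Q^{m+1}$ and $N = n+m+1$, in each summand either $i \geq n+1$ (so $Q^i a = 0$) or $N - i \geq m+1$ (so $Q^{N-i}b = 0$); hence $a_k b \in \Ker Q^{n+m+1}$, which is exactly the bound $\Pi(0)^{(n)}\cdot\Pi(0)^{(m)}\subset\Pi(0)^{(n+m)}$. Finally $\Pi(0) = \bigcup_{n\geq 0}\Pi(0)^{(n)}$ amounts to local nilpotence of $Q$: since $Q$ commutes with $\HVir$, one has $Q^N(Xv) = X(Q^N v)$ for $X \in U(\HVir)$, so it suffices that each generator in Lemma \ref{sub-sing} be killed by a power of $Q$, which holds because $Qv_{1,1-2n} = 0$ and $Q^{m+1}v_{1,1-2n}^{(m)} = 0$ by (\ref{Q-sub-sing}).

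The main obstacle I expect is the converse inclusion in part (1): showing that $\Ker Q$ is exhausted by the singular-vector submodules and that no nontrivial combination of cosingular descendants survives in the kernel. This is precisely where the fine zig-zag structure of $\Pi(p,r)$ recorded in Lemma \ref{sub-sing} and Figure \ref{diag} is needed, whereas everything else reduces to the formal derivation calculus above.
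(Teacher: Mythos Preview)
Your proposal is correct and follows essentially the same route as the paper: the paper simply says that $\overline{\Pi(0)}$ is a subalgebra because $Q$ is a screening operator, invokes \cite[Proposition~2.7]{AR1} (i.e.\ the structural input behind Lemma~\ref{sub-sing}) for the decomposition $\Ker Q=\bigoplus_{n}W_{1,1-2n}$, and declares part~(2) ``clear.'' You have merely unpacked these steps---the derivation property of the zero mode, the charge grading, and the higher Leibniz rule for $Q^{N}$---and you correctly flag that the nontrivial inclusion $\Ker Q\subset\bigoplus_{n}W_{1,1-2n}$ is exactly the content borrowed from \cite{AR1}.
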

\begin{proof}
Since $Q$ is a screening operator, $\overline{\Pi(0)} =\Ker_{\Pi(0)}  Q$ is a vertex subalgebra of $\Pi(0)$. By using \cite[Proposition 2.7]{AR1} we get
\bea \overline{\Pi(0) }& = &  \bigoplus_ {n \in {\Z} }  L^{\HVir} (c_L, c_{L,I}) . e^{n c} \nonumber \\
& = &   \bigoplus_ {n \in {\Z} } W_{1,1-2n} . \nonumber \eea
The proof of assertion (2) is clear.
\end{proof}

Condition (2) from Proposition \ref{alg-1} shows that $\HVir$--modules (and $\overline{\Pi(0)}$--modules) $ \Pi(0)  ^{(n)}  $ give a $\Z_{\ge 0}$ filtration on the vertex algebra $\Pi(0)$. 
In the same way we can construct a filtration on certain $\Pi(0)$--modules.

\begin{theorem} \label{filtracija-1}
Assume that $p\in\N$. Let $\Pi(p,r) ^{(m )}   =\Ker_{\Pi(p,r) }  Q ^{m+1} $. Then we have 
\begin{itemize}
\item[(1)] $\Pi(p,r) \cong \bigcup_{m	 \ge 0}  \Pi(p,r)  ^{(m)} , \quad \Pi(0)  ^{(n)} \cdot \Pi(p,r)  ^{(m )} \subset \Pi(p,r) ^{(n+m)}$.
\item[(2)]  For every $m \in {\Z}_{\ge 0} $  $\Pi(p,r) ^{(m )} $   is $\overline{\Pi(0)}$--module and $ \Pi(p,r)  ^{(m)} \subset  \Pi(p,r)  ^{(m+1)}$.
\item[(3)] $ \Pi(p,r)  ^{(m+1)} /  \Pi(p,r)  ^{(m)}$ is an  irreducible $\overline{\Pi(0)}$--module which is as  $\HVir$--module isomorphic to 
 \bea
 \bigoplus_{n \in {\Z} } W_{p,r-2n}  \cong  \bigoplus_{n \in {\Z} }  L^{\HVir}(h_{p,r}+np,(1-p)c_{L,I}). \label{dec-3} \eea
\end{itemize}
\end{theorem}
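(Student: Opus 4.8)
The plan is to analyze the module $\Pi(p,r)$ through the screening operator $Q$, which by construction commutes with the $\HVir$-action, so that each filtration piece $\Pi(p,r)^{(m)} = \Ker Q^{m+1}$ is automatically an $\HVir$-submodule. The three claims are interrelated: (1) is essentially a packaging of the fact that $Q$ acts locally nilpotently on $\Pi(p,r)$ together with the $\Pi(0)$-module structure; (2) follows since each $\Pi(p,r)^{(m)}$ is by definition a $Q$-stable subspace, and $\overline{\Pi(0)} = \Ker_{\Pi(0)} Q$ acts on it because $\Pi(0)^{(n)} \cdot \Pi(p,r)^{(m)} \subset \Pi(p,r)^{(n+m)}$ from (1) specializes at $n=0$ to $\overline{\Pi(0)} \cdot \Pi(p,r)^{(m)} \subset \Pi(p,r)^{(m)}$; and (3) is the genuinely substantive part, identifying the successive quotients.

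\textbf{Step 1 (the filtration and inclusions).} First I would establish (1). The inclusion $\Pi(0)^{(n)} \cdot \Pi(p,r)^{(m)} \subset \Pi(p,r)^{(n+m)}$ is the module-theoretic analogue of the algebra statement in Proposition~\ref{alg-1}(2); since $Q$ is a derivation-like screening operator (it commutes with the full $\HVir$-action and acts on products via a Leibniz rule coming from $Q = u_0$), applying $Q^{n+m+1}$ to a product of a $Q^{n+1}$-annihilated algebra element and a $Q^{m+1}$-annihilated module element gives zero by binomial expansion. That $\Pi(p,r) = \bigcup_{m\ge 0}\Pi(p,r)^{(m)}$ amounts to local nilpotence of $Q$, which follows from the structural description of $\Pi(p,r)$ in Lemma~\ref{sub-sing}: every generator is either a singular vector (killed by $Q$) or a cosingular vector $v_{p,r-2\ell}^{(m)}$ killed by $Q^{m+1}$, and weight considerations bound $m$ on each graded piece.

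\textbf{Step 2 (identifying the quotient as an $\HVir$-module).} The heart of the proof is (3). By Lemma~\ref{sub-sing} and the relation (\ref{Q-sub-sing}), the map $Q^{m+1}$ sends $\Pi(p,r)^{(m+1)}$ onto the span of the singular and lower-order cosingular vectors, and the quotient $\Pi(p,r)^{(m+1)}/\Pi(p,r)^{(m)}$ is spanned by the images of the vectors $v_{p,r-2n}^{(m+1)}$. I would show that modulo $\Pi(p,r)^{(m)}$ each such image is again an $\HVir$-\emph{singular} vector of the same weight as $v_{p,r-2n}$, so the quotient decomposes as $\bigoplus_{n\in\Z} W_{p,r-2n}$, and then invoke Proposition~\ref{kratka-verzija}(2) (valid since $p\in\N$) to rewrite each $W_{p,r-2n} \cong L^{\HVir}(h_{p,r-2n},(1-p)c_{L,I})$, using the weight shift $h_{p,r-2n} = h_{p,r}+np$ recorded after (\ref{mod-simple}) to obtain (\ref{dec-3}).

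\textbf{Step 3 (irreducibility over $\overline{\Pi(0)}$).} Finally I would upgrade the $\HVir$-decomposition to irreducibility over the larger algebra $\overline{\Pi(0)}$. The point is that $\overline{\Pi(0)} = \bigoplus_n W_{1,1-2n}$ contains the fields $e^{nc}$, whose action intertwines the summands $W_{p,r-2n}$ in (\ref{dec-3}): the operator $e^c$ shifts $n \mapsto n+1$ and is a bijection between consecutive summands at the level of the quotient (this is the content of $[d(-k),e^c_j]$ and the $S_{kp}(c)$-relations recorded in the proof of Lemma~\ref{sub-sing}). Since each $W_{p,r-2n}$ is $\HVir$-irreducible and the $\overline{\Pi(0)}$-action glues them transitively, the quotient has no proper nonzero $\overline{\Pi(0)}$-submodule. \textbf{I expect the main obstacle} to be Step~2: controlling precisely how $\HVir$ acts on the cosingular vectors $v_{p,r-2n}^{(m+1)}$ modulo $\Pi(p,r)^{(m)}$, i.e.\ verifying that the lower-order correction terms produced by $L(k)$ and $I(k)$ all land in $\Pi(p,r)^{(m)}$ so that the quotient vectors behave as honest singular vectors; this is where the explicit commutator relations and the Schur-polynomial identities from the preceding sections must be deployed carefully.
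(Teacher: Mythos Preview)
Your outline is correct and matches the paper's strategy: parts (1) and (2) are declared clear there, the $\HVir$--decomposition in (3) is imported from \cite[Proposition 2.7]{AR1} (Lemma~\ref{sub-sing} here), and irreducibility over $\overline{\Pi(0)}$ is obtained from the action of the vectors $e^{nc}$ linking the summands. The one point worth flagging is that the ``main obstacle'' you anticipate in Step~2 never arises in the paper: rather than computing how $L(k)$, $I(k)$ act on the cosingular vectors modulo $\Pi(p,r)^{(m)}$, one simply notes that $Q^{m+1}$ induces an injective (indeed bijective, by (\ref{Q-sub-sing})) $\HVir$--linear map $\Pi(p,r)^{(m+1)}/\Pi(p,r)^{(m)}\to \Pi(p,r)^{(0)}$, so the quotient inherits the known decomposition $\bigoplus_n W_{p,r-2n}$ for free. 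The same device powers the paper's irreducibility argument: to check that $e^{nc}_{k_0}u_{p,r-2\ell}\notin\Pi(p,r)^{(m)}$ (with $u_{p,r-2\ell}$ the cosingular lift and $k_0=-n(p-1)-1$), apply $Q^{m+1}$ and use $[Q,e^{nc}_{k_0}]=0$ to reduce to the identity $e^{nc}_{k_0}v_{p,r-2\ell}=v_{p,r-2(\ell+n)}\ne 0$ among singular vectors. This replaces your commutator-and-Schur-polynomial bookkeeping in both Steps~2 and~3 by a single lifting trick.
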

\begin{proof}
The proof of assertions (1) and (2) is clear. The decomposition  (\ref{dec-3})  essentially  follows from \cite[Proposition 2.7]{AR1}. Let us prove the irreducibility result  in (3). It  suffices to prove that
$$ W_{1,1-2n} \cdot W_{p,r-2 \ell  } = W_{p, 1-2 (n+\ell )}. $$
Recall that
$$  W_{1,1-2n} = U({\HVir} )\cdot e^{nc}, \qquad  W_{p,r-2\ell }  = U ({\HVir})\cdot u_{p,r-2\ell }  \bmod \Pi(p,r)  ^{(m)} ,$$
where $u_{p,r-2\ell }$ is an $\HVir$--cosingular vector $v_{p,r-2(\ell-m-1)}^{(m+1)}$ such that $Q^{m+1} u_{p,r-2\ell } = v_{p,r-2\ell } $.

Since $e^{nc}_{k_0}  v_{p, r-2\ell} = v_{p, r-2(\ell +n)}$ for $k_0 = - n (p-1) -1$ we get that
$$ Q^{m+1} e^{nc}_{k_0}  u_{p, r-2\ell}  =v_{p, r-2(\ell +n)} \ne 0. $$
So $ e^{nc}_{k_0}  u_{p, r-2\ell}  + \Pi(p,r)  ^{(m)} $ generates  $W_{p, 1-2 (n+\ell )} $ in $\Pi(p,r)  ^{(m+1)} /  \Pi(p,r)  ^{(m)}$. The proof follows.
\end{proof}

Figure \ref{diag} in Appendix \ref{app} represents a portion of module $\Pi (p,r)$ with action of $Q$ and $e^c_{-p}$ on (sub)singular generators obtained in Lemma \ref{sub-sing}. Quotient module $\Pi (p,r)^{(m+1)} / \Pi (p,r)^{(m)}$ is a direct sum of "slices", each generated by $v_{p,r-2\ell}^{(m+1)} + \Pi (p,r)^{(m)}$ and isomorphic to $W_{p,r-2(\ell+m+1)}$.

\bigskip
Let us now consider modules $\Pi(-p,r)$. Recall that $h_{-p,r+2}=h_{-p,r}+p$.
\begin{theorem} \label{nedef}
\item[(1)] As an $\HVir$--module $\Pi(0,r)$ is isomorphic to $$\bigoplus_{n\in\Z}W_{0,r}\cong\bigoplus_{n\in\Z}L^{\HVir}\left(\frac{c_L-2}{24},c_{L,I}\right).$$

\item[(2)] Let $p \in \N$. Consider $Q$ as $\overline{\Pi(0)}$--endomorphism of $\Pi(-p,r)$, and let $\Pi(-p,r)^{(m)}=\Img Q^m$. Then for $m\in\N$ we have
\begin{itemize}
	\item[(a)] $\Pi(-p,r) \cong \Pi(-p,r)^{(m)}$ as $\overline{\Pi(0)}$--modules,
	\item[(b)] $\Pi(-p,r)^{(m)}/\Pi(-p,r)^{(m+1)}$ is an irreducible $\overline{\Pi(0)}$--module which is as an $\HVir$--module isomorphic to $$\bigoplus_{\ell\in\Z}L^{\HVir}(h_{-p,r}+\ell p,(1+p)c_{L,I}) $$
\end{itemize}
\end{theorem}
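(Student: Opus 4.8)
The plan is to treat both parts through the lattice (charge) decomposition of $\Pi(p,r)$ together with the explicit action of the screening operator $Q=e^c_0$, proceeding exactly as in Theorem \ref{filtracija-1} but replacing the kernel filtration by an image filtration. Throughout I use that the modes $L(n),I(n)$ are built only from the Heisenberg modes $c(k),d(k)$ and carry no lattice shift, so each Fock space $M(1)v_{p,r-2\ell}$ is an $\HVir$--submodule, whereas $Q$ shifts the lattice sector, $Q\colon M(1)v_{p,r-2\ell}\to M(1)v_{p,r-2(\ell+1)}$. For part (1), note that $p=0$ gives $h_I=c_{L,I}$, so $h_I/c_{L,I}-1=0$ and the Verma module $V^{\HVir}(\tfrac{c_L-2}{24},c_{L,I})$ is \emph{irreducible} by Billig's criterion \cite{Billig}; since $c_I=0$ its graded dimension is $\prod_{k\ge1}(1-q^k)^{-2}$, which is exactly that of the two--boson Fock space $M(1)v_{0,r-2n}$. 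As $W_{0,r-2n}=U(\HVir)v_{0,r-2n}\cong L^{\HVir}(\tfrac{c_L-2}{24},c_{L,I})$ (Proposition \ref{kratka-verzija}(3)) is a submodule of $M(1)v_{0,r-2n}$ with the same character, the two coincide, and summing the $\HVir$--stable sectors over $n\in\Z$ yields $\Pi(0,r)=\bigoplus_n M(1)v_{0,r-2n}\cong\bigoplus_n W_{0,r}$.

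For part (2), the same character comparison, now via Proposition \ref{kratka-verzija}(1) (applicable since $-p\notin\Z_{\ge1}$), identifies each sector as a full Verma module $V_\ell:=M(1)v_{-p,r-2\ell}\cong V^{\HVir}(h_{-p,r-2\ell},(1+p)c_{L,I})$, so $\Pi(-p,r)=\bigoplus_{\ell}V_\ell$ as an $\HVir$--module. Because $Q$ commutes with the $\HVir$--action, $Q\colon V_\ell\to V_{\ell+1}$ is an $\HVir$--homomorphism, and by the Schur--polynomial computation recalled in the introduction and in \cite[Theorem 4.3]{AR1} it sends the highest weight vector to the level--$p$ singular vector, $Qv_{-p,r-2\ell}=S_p(c)\,v_{-p,r-2(\ell+1)}$.

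The central point, which I expect to be the main obstacle, is to show that $Q$ is \emph{injective} on $\Pi(-p,r)$. Here I would filter each $V_\ell$ by $d$--degree (the number of factors $d(-k)$) and pass to the associated graded. Using the commutators recorded in the proof of Lemma \ref{sub-sing}, namely $[Q,c(-k)]=0$ and $[Q,d(-k)]=-2e^c_{-k}$, together with $e^c_{-k}v_\ell=S_{k+p}(c)v_{\ell+1}$, a short induction on $d$--degree shows that the contributions of $[Q,d(-k)]$ do not raise the $d$--degree, so the associated graded of $Q$ is multiplication by $S_p(c)$ composed with the lattice shift. Since the underlying space is the polynomial algebra $\C[c(-k),d(-k):k\ge1]$, an integral domain, and $S_p(c)\ne0$, this graded map is injective; hence $Q$ is injective. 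Granting this, $Q^m$ is an injective $\overline{\Pi(0)}$--endomorphism (it commutes with $\overline{\Pi(0)}=\Ker_{\Pi(0)}Q$) with image $\Pi(-p,r)^{(m)}=\Img Q^m$, so $Q^m\colon\Pi(-p,r)\xrightarrow{\ \sim\ }\Pi(-p,r)^{(m)}$ is the desired $\overline{\Pi(0)}$--isomorphism of (2)(a).

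Finally, for (2)(b) the isomorphism $Q^m$ identifies $\Pi(-p,r)^{(m)}/\Pi(-p,r)^{(m+1)}$ with $\operatorname{coker}Q=\Pi(-p,r)/Q\,\Pi(-p,r)$, independently of $m$. On the $\ell$--sector, $Q\,V_{\ell-1}=U(\HVir)\,S_p(c)v_{-p,r-2\ell}$ is the submodule of $V_\ell$ generated by its level--$p$ singular vector; invoking the uniqueness of that singular vector and the Verma structure of the maximal submodule (Billig \cite{Billig}, as used in \cite{AR1}) — equivalently, matching the character $q^{h_{-p,r-2\ell}}(1-q^p)\prod_{k\ge1}(1-q^k)^{-2}$ of $V_\ell/QV_{\ell-1}$, computed from the injectivity of $Q$, against that of the irreducible quotient — I would conclude $V_\ell/QV_{\ell-1}\cong L^{\HVir}(h_{-p,r-2\ell},(1+p)c_{L,I})$. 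Summing over $\ell$ and reindexing $\ell\mapsto-\ell$ gives the stated $\HVir$--decomposition. To upgrade it to $\overline{\Pi(0)}$--irreducibility I would mirror the argument of Theorem \ref{filtracija-1}(3): the states $e^{nc}\in\overline{\Pi(0)}$ possess modes carrying the (nonzero) image of $v_{-p,r-2\ell}$ to that of $v_{-p,r-2(\ell+n)}$ in $\operatorname{coker}Q$, so the $\overline{\Pi(0)}$--action links all the $\HVir$--irreducible sectors; any nonzero $\overline{\Pi(0)}$--submodule then contains, and hence equals, the whole cokernel.
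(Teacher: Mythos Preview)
Your argument is correct and follows the same overall architecture as the paper: decompose $\Pi(-p,r)$ into its lattice sectors, identify each as a Verma module via Proposition~\ref{kratka-verzija}(1), establish that $Q$ is injective, and then read off the cokernel as a direct sum of irreducible highest weight quotients, upgrading to $\overline{\Pi(0)}$--irreducibility via the modes of $e^{nc}$ exactly as in Theorem~\ref{filtracija-1}(3). The one substantive difference is your handling of the injectivity of $Q$, which you flag as the main obstacle. The paper dispatches it in one line: once each summand $W_{-p,r+2\ell}$ is known to be a Verma module, $Q$ restricted to it is a nonzero $\HVir$--homomorphism out of a Verma module (nonzero because $Qv_{-p,r+2\ell}=S_p(c)\,v_{-p,r+2(\ell-1)}\ne 0$), and any such homomorphism is injective since a Verma module is generated freely by its highest weight vector. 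Your $d$--degree filtration and integral--domain argument is correct and reaches the same conclusion, but with more effort; its merit is that it does not invoke the Verma structure, though that structure is already available here and makes the paper's route shorter. For part~(1) you also supply, via an explicit character comparison, what the paper simply cites from \cite{AR1}; this is a legitimate self--contained alternative.
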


\begin{proof}
It was shown in \cite{AR1} that $\Pi(p,r) \cong \bigoplus_{\ell\in\Z} W_{p,r+2\ell}$ as $\HVir$--module when $p \notin \N $. Decomposition in (1) then follows from Proposition \ref{kratka-verzija} (3). Since
\bea\label{Q-nedef}
Qv_{-p,r}=S_{p}(c)v_{-p,r-2}
\eea
we see that $Q(W_{-p,r+2\ell})\subset W_{-p,r+2(\ell-1)}$ and since $Q$ commutes with the action of $\HVir$ we have $\Ker Q=0$. Therefore $\Pi(-p,r) \cong \Img Q = \Pi(-p,r)^{(1)}$, so claim (a) follows by iteration.
 
Let us prove assertion (b). It suffices to prove the claim for $m=0$. General statement then follows from (a). Recall that $W_{-p,r+2\ell}\cong V^{\HVir}(h_{-p,r+2\ell},(1+p)c_{L,I})$ and notice that $Qv_{-p,r+2(\ell+1)} = S_{p}(c)v_{-p,r+2\ell}$ is an $\HVir$--singular vector in $W_{-p,r+2\ell}$ which generates the maximal submodule. Now $U(\HVir)\cdot(v_{-p,r+2\ell} + \Img Q) \cong L^{\HVir}(h_{-p,r+2\ell},(1+p)c_{L,I}) $. This proves the decomposition in (b). Proof of irreducibility in (b) is similar to the proof of Theorem \ref{filtracija-1}.
\end{proof}
See Figure \ref{diag2} in Appendix \ref{app} for reference.

\section{Relations in \texorpdfstring{$\Pi(0)$}{PI(0)}--modules}\label{singv}

In this section we shall apply a relation in the vertex algebra $\Pi(0)$ on its modules and recover an explicit formula for a singular vector in the Verma module $V^{\HVir} (h, (1-p)c_{L,I})$, for $p \ge 1$.
We shall use this formula in Section \ref{verma} when we construct a free field realization of these Verma modules.

Recall first that the Schur polynomials $S_r(x_1, x_2, \dots)$   in variables $x_1, x_2, \dots$ are defined by 
\bea \exp\left( \sum_{n=1}^{\infty} \frac{x_n}{n}  y^n\right) = \sum_{r=1}^{\infty} S_r (x_1, x_2, \dots) y^r \label{schur-def} \eea
Then for any $\gamma \in L$ we set
$ S_r (\gamma):= S_r( \gamma(-1), \gamma(-2), \dots)$.

Let $$e^{-c}(z) = Y(e^{-c}, z) = \sum_{i \in \Z} e^{-c}_i z^{-i-1}. $$
By direct calculation we get
 
\bea  L(-2) e^{-c} = \frac{c_L- 26}{24} c(-2) e^{-c} - \frac{1}{2} L(-1) (d(-1) e^{-c}) . \label{relacija} \eea
 
 Let $$s =(  L(-2) -  \frac{c_L- 26}{24} c(-2)  ) e^{-c}. $$
 Then we get
 \begin{lemma}
 On every $\Pi(0)$--module we have
\bea \mathcal Q  &=& s_0=  \mbox{Res}_z Y(s,z)  \nonumber \\
   & = & \sum_{i = 0}^{\infty}\left(  L(-2-i ) e^{-c} _i + e^{-c}_{-i-1} L(-1 + i) \right) \nonumber \\
    & & -  \frac{c_L- 26}{24}  \sum_{i \in {\Z} } (i+1) c(-i-2 )  e^{-c} _i\nonumber \\
  & = & 0. \nonumber \eea
 \end{lemma}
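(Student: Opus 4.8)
The statement packages two claims about the single object $\mathcal Q=s_0=\mathrm{Res}_z\,Y(s,z)$: the explicit mode expansion in its middle two lines, and the vanishing in the last line. I would prove them in that order. Both are identities of operators acting on $\Pi(0)$, and since $s\in\Pi(0)$ they transport verbatim to an arbitrary $\Pi(0)$--module $M$ through the module vertex operator $Y_M(\cdot,z)$, because every manipulation used below is equally valid for module vertex operators.

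For the expansion I would start from the definition $s=L(-2)e^{-c}-\tfrac{c_L-26}{24}\,c(-2)e^{-c}$ and read off the coefficient of $z^{-1}$ in $Y(s,z)$ summand by summand. Since $L(-2)=\omega_{-1}$, the first piece is the $(-1)$--product, for which $Y(\omega_{-1}e^{-c},z)={:}Y(\omega,z)Y(e^{-c},z){:}$; splitting this normally ordered product into its creation and annihilation halves and extracting $\mathrm{Res}_z$ produces exactly $\sum_{i\ge 0}\big(L(-2-i)e^{-c}_i+e^{-c}_{-i-1}L(-1+i)\big)$. For the Heisenberg piece I would write $c(-2)$ as the $(-2)$--mode of the Heisenberg state $c(-1)\mathbf 1$ and apply the iterate (Borcherds) formula to $Y\big((c(-1)\mathbf 1)_{-2}e^{-c},z\big)$, expanding $(z-w)^{-2}$ in the two regions $|z|>|w|$ and $|w|>|z|$. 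The decisive simplification is that $\langle c,c\rangle=0$, so every mode $c(m)$ commutes with every mode $e^{-c}_i$; this lets me merge the two halves of the iterate into the single sum $\sum_{i\in\Z}(i+1)\,c(-i-2)e^{-c}_i$, with the term $i=-1$ dropping out through the factor $(i+1)$.

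The vanishing is then immediate from relation (\ref{relacija}), which rewrites $s$ as $-\tfrac12\,L(-1)\big(d(-1)e^{-c}\big)$; that is, $s$ lies in the image of the translation operator $L(-1)=D$. Because $Y(L(-1)a,z)=\partial_z Y(a,z)$ holds for every state $a$, and likewise for module vertex operators, the field $Y(s,z)$ is a total $z$--derivative, and the residue of any $\partial_z(\cdots)$ vanishes identically. Hence $\mathcal Q=s_0=-\tfrac12\,\mathrm{Res}_z\,\partial_z Y\big(d(-1)e^{-c},z\big)=0$ on $\Pi(0)$, and therefore on every $\Pi(0)$--module $M$.

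Since (\ref{relacija}) is already in hand, the conceptual step---recognizing $s$ as a derivative and concluding $s_0=0$---costs essentially nothing. The genuine labor, and the only place I expect to stumble, is the index bookkeeping in the expansion: correctly pairing the creation and annihilation halves of the two normally ordered products with the displayed sums, and in particular confirming that the Heisenberg contribution collapses to $\sum_{i\in\Z}(i+1)c(-i-2)e^{-c}_i$ with precisely this coefficient and index shift. I would guard against sign and shift errors by testing the identity against the low--lying vectors $e^{-c}_i\,v$ for small $|i|$ before committing to the general form.
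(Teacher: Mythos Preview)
Your proposal is correct and follows essentially the same route as the paper: the vanishing is obtained by combining relation~(\ref{relacija}), which identifies $s$ with $-\tfrac12 L(-1)(d(-1)e^{-c})$, with the general fact $(L(-1)a)_0=0$. The paper's proof records only this one-line argument and treats the mode expansion as routine; your plan to derive the expansion from the $(-1)$--product formula for the Virasoro piece and the iterate formula (simplified via $\langle c,c\rangle=0$) for the Heisenberg piece is the standard computation and merely fills in what the paper leaves implicit.
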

 \begin{proof}
 The assertion follows from  (\ref{relacija}) and the fact that 
 $$ (L(-1) u )_0 = 0$$
 in every vertex operator algebra.
 \end{proof}
 
Now we shall see some consequences of the relation $\mathcal Q=0$ for irreducible $\HVir$--modules $L^ {\HVir} (h,h_I)$ such that $h_I = (1-p) c_{L,I}$, $p \in \N$, which are realized as $ W_{p,r}$, for $r \in \mathbb{C} $.

We have
\bea
0 & = &   \mathcal Q e^{\frac{p-1}{2} d^2 + \frac{1-r}{2} c }  \nonumber \\ 
 & = &   \left( \sum_{i = 0}^{\infty} (  L(-2-i ) e^{-c} _i   )   + e^{-c} _{-1}  L(-1)  + h _{p,r} e^{-c} _{-2} \right)  e^{\frac{p-1}{2} d^2 + \frac{1-r}{2} c }  \nonumber \\
 &  &  -  \frac{c_L- 26}{24} \left(  \sum_{i \in {\Z} } (i+1) c(-i-2 )  e^{-c} _i \right) e^{\frac{p-1}{2} d^2 + \frac{1-r}{2} c }  \nonumber \\
& = &   \left( \sum_{i = 0}^{p-2} (  L(-2-i ) e^{-c} _i   )   + e^{-c} _{-1}  L(-1)  + h _{p,r} e^{-c} _{-2} \right)  e^{\frac{p-1}{2} d^2 + \frac{1-r}{2} c }  \nonumber \\
 &  &  -  \frac{c_L- 26}{24} \left(  \sum_{i = -2 }  ^{p-2} (i+1) c(-i-2 )  e^{-c} _i \right) e^{\frac{p-1}{2} d^2 + \frac{1-r}{2} c }  \nonumber \\
 & = &  \left( \sum_{i = 1}^{p}    L(-i) e^{-c} _{i-2}   +   \left( h _{p,r} -1  +   \frac{c_L- 26}{24} (p-1) \right) e^{-c} _{-2}  \right. \nonumber  \\
  &  &   \left. -  \frac{c_L- 26}{24}  \sum_{i = -1 }  ^{p-2} (i+1) c(-i-2 )  e^{-c} _i  \right) e^{\frac{p-1}{2} d^2 + \frac{1-r}{2} c }  \nonumber  \\
  & = & \left(  \sum_{i = 1}^{p}    L(-i) S_{p-i} (-c)   +  \left( h _{p,r} -1  +   \frac{c_L- 26}{24} (p-1) \right)S_p (-c)   \right.\\
 &  &  \left. -  \frac{c_L- 26}{24} \sum_{i = 1}  ^{p} (i-1) c(-i )  S_{p-i} \right) e^{\frac{p-1}{2} d^2 + \frac{1- (r+2)}{2} c }  \nonumber  
\eea
 
In this way we have proved:
\begin{proposition} \label{singular-formula}
Let $p\in\N$. In $W_{p,r+2} $ we have:
\bea 0&=&  \left(  \left (\sum_{i = 1}^{p}    L(-i) S_{p-i} (-c)    \right )  +    \left( h _{p,r} -1  +   \frac{c_L- 26}{24} (p-1)  \right) S_p (-c) +\phantom{x} \right.  \nonumber  \\
 &  &  \left. -  \frac{c_L- 26}{24} \left(  \sum_{i = 1}  ^{p} (i-1) c(-i )  S_{p-i} (-c)  \right)\right)  v_{p,r+2}  \nonumber \eea
 In particular, let $h_I = (1-p) c_{L,I} $, $h = h_{p, r+2}$. Then the singular vector of level $p$ in $V^{\HVir} (h, h_I)$ is  $\Phi_{p}  (L,c)\cdot v_{h,h_I}$ where 
\bea 
&&    \Phi_{p} (L, c)  :=  \sum_{i = 1}^{p}   \left(   L(-i) S_{p-i} (- c) \right)   +   S_p (-c)   \left( L(0)  +   \frac{c_L- 2}{24} (p-1)  \right )   \nonumber \\   &&  -  \frac{c_L- 26}{24} \left(  \sum_{i = 1}  ^{p} (i-1) c(-i )  S_{p-i} (-c)  ) \right)    \nonumber 
\eea
 \end{proposition}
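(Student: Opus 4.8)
The plan is to derive the stated identity as a direct consequence of the vanishing relation $\mathcal Q = 0$ established in the preceding Lemma, by applying the operator $\mathcal Q$ to the specific highest weight vector $e^{\frac{p-1}{2}d^2 + \frac{1-r}{2}c}$ and computing how each summand acts. The core mechanism is that $\mathcal Q$, being the zeroth mode of the state $s = \bigl(L(-2) - \frac{c_L-26}{24}c(-2)\bigr)e^{-c}$, is a sum of products of Virasoro modes, Heisenberg modes, and the modes $e^{-c}_i$ of the vertex operator $Y(e^{-c},z)$. Since $e^{-c}$ carries charge $-1$ in the $c$--direction of the lattice, applying $e^{-c}_i$ to $v_{p,r}$ shifts the group-algebra exponent and produces a Schur polynomial factor; this is the standard lattice vertex algebra computation.

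First I would record exactly how the modes $e^{-c}_i$ act on the vector $v_{p,r} = e^{\frac{p-1}{2}d^2 + \frac{1-r}{2}c}$. The key input is the vertex operator formula for $Y(e^{-c},z)$ acting on a lattice module, which (after accounting for the pairing $\langle \cdot,\cdot\rangle$ on $\mathfrak h$ and the commutator relations $[L(n),c(m)]$ and $[L(n),d(m)]$ recorded in the excerpt) yields $e^{-c}_{i-2}\,v_{p,r} = S_{p-i}(-c)\,v_{p,r+2}$ up to the uniform exponent shift $c \mapsto c$ reflected in $v_{p,r} \to v_{p,r+2}$. Concretely one checks that the charged part lowers the $c$--exponent by one and that the vacuum expansion $\exp\bigl(\sum_n \frac{(-c)(-n)}{n}z^n\bigr)$ generates precisely the Schur polynomials $S_r(-c)$ via the defining generating function (\ref{schur-def}). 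This step must be done carefully for the three types of terms: the Virasoro terms $L(-i)e^{-c}_{i-2}$ give $L(-i)S_{p-i}(-c)$; the term $e^{-c}_{-1}L(-1) + h_{p,r}e^{-c}_{-2}$ collapses onto $L(0)$ and the scalar $h_{p,r}$ because $L(-1)v_{p,r}$ and $L(0)v_{p,r}$ are determined by the highest-weight data; and the last Heisenberg sum reindexes to $\sum_{i=1}^p (i-1)c(-i)S_{p-i}(-c)$.

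Second, I would track the range of the summation. The infinite sums in $\mathcal Q$ truncate to finite ones because $e^{-c}_i\,v_{p,r} = 0$ for $i$ large enough: once $i \ge p-1$ the lowering annihilates the vector, which is why the Virasoro sum runs only from $i=1$ to $p$ and the Heisenberg sum likewise terminates at $p$. This truncation is exactly what converts the abstract operator identity into a finite, explicit level-$p$ vector. Assembling the three pieces and absorbing the constant $-1 + \frac{c_L-26}{24}(p-1)$ together with $h_{p,r}$ into the combination $h_{p,r} - 1 + \frac{c_L-26}{24}(p-1)$, and then rewriting $h_{p,r+2} = h_{p,r}-p$ so that the coefficient becomes $L(0) + \frac{c_L-2}{24}(p-1)$ acting on $v_{p,r+2}$, produces the operator $\Phi_p(L,c)$ in the stated form. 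Identifying $v_{p,r+2}$ with the highest weight vector $v_{h,h_I}$ of $V^{\HVir}(h,h_I)$ at $h = h_{p,r+2}$, $h_I = (1-p)c_{L,I}$ via Proposition \ref{kratka-verzija} finishes the identification of the singular vector.

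The main obstacle I expect is bookkeeping rather than conceptual: correctly computing the mode action $e^{-c}_i\,v_{p,r}$ with all signs, the normal-ordering constants coming from $\langle c, d^2\rangle$ and the self-pairing of the lattice vectors, and the reconciliation of the shift $h_{p,r} \to h_{p,r+2}$ with the change $\frac{c_L-26}{24} \to \frac{c_L-2}{24}$ in the constant term. Getting the constant exactly right is delicate because it blends a contribution from $h_{p,r}$, a contribution from the $-1$ produced by $e^{-c}_{-1}L(-1)$ acting through the commutator, and the $\frac{c_L-26}{24}(p-1)$ term; the clean final form $\frac{c_L-2}{24}(p-1)$ only emerges after the $L(0)$--eigenvalue shift is substituted. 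Everything else reduces to the generating-function definition (\ref{schur-def}) of the Schur polynomials and the already-established relation $\mathcal Q = 0$.
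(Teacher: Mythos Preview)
Your approach is precisely the paper's: the identity is obtained by applying the relation $\mathcal Q = 0$ from the preceding Lemma to $v_{p,r}$, using the lattice formula $e^{-c}_{i-2}\,v_{p,r} = S_{p-i}(-c)\,v_{p,r+2}$ together with the truncation $e^{-c}_i\,v_{p,r}=0$ for $i>p-2$. One correction to your bookkeeping sketch: the term $e^{-c}_{-1}L(-1)\,v_{p,r}$ does \emph{not} simplify via highest-weight data (note $L(-1)v_{p,r}\neq 0$); instead one commutes $e^{-c}_{-1}$ past $L(-1)$ so that $L(-1)e^{-c}_{-1}$ supplies the $i=1$ term of $\sum_{i=1}^{p} L(-i)e^{-c}_{i-2}$, and the commutator $[L(-1),e^{-c}_{-1}]=e^{-c}_{-2}$ (since $e^{-c}$ is primary of weight $-1$) is what produces the extra $-1$ in the constant. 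The $\tfrac{c_L-26}{24}(p-1)$ contribution then arises from the $i=-2$ term of the Heisenberg sum via $c(0)\equiv p-1$, exactly as you anticipated.
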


\section{Deformed action of $\HVir$ on weight \texorpdfstring{$\Pi(0)$}{PI(0)}--modules and realization of Verma modules}
\label{verma}

As we noticed in Section \ref{review-ar}, the free field realization from \cite{AR1} does not provide realization of Verma modules $V^{\HVir} (h, (1-p)c_{L.I})$  and their singular vectors in the case $p \ge 1$. In order to understand these Verma modules, we shall use certain deformation of free field realization from \cite{AR1}.
We shall use the construction from \cite{AM-Selecta} to deform the action of the twisted Heisenberg--Virasoro algebra on $\Pi(0)$--modules (see also \cite{Bakalov}, \cite{Huang}). Let
$$\Delta (u, z) = z^{u_0} \exp  \left( \sum_{n=1} ^{\infty}  \frac{ u_n} {-n}  (-z) ^{-n} \right) . $$

First we recall a definition of  logarithmic modules. More information about structure theory of logarithmic modules can be found in literature on logarithmic vertex operator algebras  (see  \cite{AdM-2013}, \cite{CG},  \cite{HLL},  \cite{Mil}, \cite{Miy} and reference therein).
\begin{definition} 
\begin{itemize}
\item[(1)]   A module  $(M, Y_M)$ for the conformal vertex algebra with conformal vector $\omega$ is a logarithmic module of rank $m \in {\Z_{\ge 1} }$ if $$ (L(0)-L_{ss} (0) ) ^{m}  = 0, \quad (L(0)-L_{ss} (0) ) ^{m-1}   \ne 0,$$
where $L_{ss}(0)$ is the semisimple part of $L(0)$. 

\item[(2)] If  for every  $m \in {\Z_{\ge 1} }$  $(L(0)-L_{ss} (0) ) ^{m}  \ne 0$ on $M$, we say that $(M,Y_M)$ is a logarithmic module of infinite rank.
\end{itemize}
\end{definition}

\begin{theorem} \label{deformation}
For every $\Pi(0)$--module $(M,Y_M(\cdot,z))$,  $$(\widetilde{M}, \widetilde{Y}_{\widetilde{M} } (\cdot, z) ) := (M, Y_M (\Delta(u, z) \cdot, z))$$
is a $\overline{\Pi(0)} := \Ker_{\Pi(0)}  Q$--module. The action of Heisenberg--Virasoro algebra is
\bea
\widetilde{I(z)} & = &\sum_{n \in {\Z} } \widetilde{I(n)} z ^{-n-1}   =    \widetilde{Y}_{\widetilde{M} }  ( I,  z)  = I(z) \nonumber \\
\widetilde{L(z)} & = &  \sum_{n \in {\Z} } \widetilde{L(n)} z ^{-n-2}  =  \widetilde{Y}_{\widetilde{M} }  ( \omega,  z)  =  L(z) +  z ^{-1} Y_M (u ,z) .\nonumber  \eea
In particular,
\bea \label{deformed-action} \widetilde{I(n)} = I(n), \quad  \widetilde{L(n)} = L(n) + u_n. \eea
and $\widetilde{L(0)}  - \widetilde{L_{ss}(0)} = u_0 = Q$.
\end{theorem}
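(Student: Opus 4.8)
The plan is to realize this statement as a special case of the $\Delta$--operator construction of H.~Li used in \cite{AM-Selecta}, applied to the distinguished vector $u=e^{c}$ whose zero mode is the screening operator $Q=u_0$. First I would record the two structural properties of $u$ that drive everything. Because $\langle c,c\rangle=\langle x,x\rangle=0$ in the hyperbolic lattice, the operator product $Y(e^c,z)e^c$ is regular, so $u_n u=0$ for all $n\ge 0$; and because $Q=u_0$ is a screening operator, $[Q,I(n)]=[Q,L(n)]=0$ for all $n$, whence $u_0 I=u_0\omega=0$ (apply the commutator formula $[u_0,a_n]=(u_0 a)_n$ and evaluate at the vacuum). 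These are precisely the inputs needed to run Li's construction and to evaluate $\Delta(u,z)$ on the generators.

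Next I would compute $\Delta(u,z)$ on $I$ and on $\omega$ directly from $\Delta(u,z)=z^{u_0}\exp\bigl(\sum_{n\ge 1}\frac{u_n}{-n}(-z)^{-n}\bigr)$. For $I=-c_{L,I}c(-1)$, using $[e^c_n,c(-1)]=0$ and $e^c_n\mathbf{1}=0$ for $n\ge 0$ gives $u_n I=0$ for all $n\ge 0$, so $\Delta(u,z)I=I$ and hence $\widetilde{I(z)}=I(z)$. For $\omega$, since $u_0\omega=0$ we have $z^{u_0}\omega=\omega$. The vector $u=e^c$ is an $\HVir$--highest weight vector of $L(0)$--weight $h_{1,-1}=1$, hence primary of weight one; skew--symmetry $Y(u,z)\omega=e^{zL(-1)}Y(\omega,-z)u$ together with $L(0)u=u$ and $L(n)u=0$ for $n\ge 1$ then yields $u_1\omega=u$ and $u_n\omega=0$ for $n\ge 2$. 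The only potentially obstructing quadratic term of the exponential is proportional to $u_1 u$, which vanishes by $\langle c,c\rangle=0$. Therefore $\Delta(u,z)\omega=\omega+z^{-1}u$, and applying $Y_M(\cdot,z)$ gives $\widetilde{L(z)}=L(z)+z^{-1}Y_M(u,z)$, i.e. $\widetilde{L(n)}=L(n)+u_n$.

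The main conceptual obstacle, and the reason the conclusion is phrased over $\overline{\Pi(0)}$ rather than over all of $\Pi(0)$, is that $u_0=Q$ is not semisimple: the factor $z^{u_0}=\exp((\log z)\,Q)$ produces genuine $\log z$ terms, so $a\mapsto Y_M(\Delta(u,z)a,z)$ fails to be a single--valued family of fields on all of $\Pi(0)$. I would resolve this by restricting to $\overline{\Pi(0)}=\Ker_{\Pi(0)}Q=\Ker u_0$, which is a vertex subalgebra by Proposition \ref{alg-1} and on which $z^{u_0}$ acts as the identity; there Li's construction applies and endows $\widetilde M$ with an honest $\overline{\Pi(0)}$--module structure (note $I,\omega\in\Ker Q$, consistent with the computations above). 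Finally, since $[Q,L(0)]=0$ with $L(0)$ semisimple and $Q$ locally nilpotent, the equality $\widetilde{L(0)}=L(0)+Q$ is exactly the semisimple--plus--nilpotent splitting, giving $\widetilde{L_{ss}(0)}=L(0)$ and $\widetilde{L(0)}-\widetilde{L_{ss}(0)}=Q=u_0$, as claimed.
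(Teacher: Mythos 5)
Your proposal is correct and takes essentially the same route as the paper, which states Theorem \ref{deformation} without proof as an instance of the $\Delta(u,z)$--construction of \cite{AM-Selecta}: your explicit verification that $\Delta(u,z)I=I$ and $\Delta(u,z)\omega=\omega+z^{-1}u$ (from $u_nu=0$ for $n\ge 0$, $u_0\omega=0$, $u_1\omega=u$, $u_n\omega=0$ for $n\ge 2$), together with the restriction to $\overline{\Pi(0)}=\Ker_{\Pi(0)}Q$ to avoid the $\log z$ terms coming from $z^{u_0}$, supplies exactly the details that the citation is meant to cover. The one caveat concerns the final line: your identification $\widetilde{L(0)}-\widetilde{L_{ss}(0)}=Q$ relies on $Q$ being locally nilpotent, which holds on the weight modules $\Pi(p,r)$ but not, for instance, on the Whittaker module $\Pi_{\lambda}$, where $u_0$ has nonzero semisimple part $\lambda\,\mathrm{Id}$ on the top component; this caveat is already implicit in the paper's own blanket statement for ``every $\Pi(0)$--module'' and is not a defect introduced by your argument.
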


Recall the definition of $\Pi(0)$--modules $$\Pi(p,r):= \Pi(0)\cdot e ^{\frac{p-1}{2}   d^2 + \frac{1-r}{2} c} \quad \mbox{where} \quad p  \in  \Z, r \in {\C} . $$
Then  $\widetilde{\Pi(p,r)}$ are logarithmic ${\HVir}$--modules which are uniquely determined by the action (\ref{deformed-action}). 

We shall also consider the cyclic submodules: 
$$ \widetilde{ W_{p,r}} = U(\HVir )\cdot e^{\frac{p-1}{2} d^2 + \frac{1-r}{2} c }  \subset \widetilde{\Pi(p,r)}. $$

\subsection{Case $h_I=(1-p)c_{L,I}$}
We saw that for the undeformed action of ${\HVir}$ studied in \cite{AR1}, vector $v_{p,r}= e^{\frac{p-1}{2} d^2 + \frac{1-r}{2} c } $, for $ p \ge 1$, generates the irreducible highest weight module $W_{p,r}$. But we shall see below that $ \widetilde{ W_{p,r}}$ is isomorphic to a Verma module.  
 
 \begin{theorem} \label{verma-realization} Assume that $p \in \N$,  $ r \in {\C}$.  We have
 \item[(1)]  $\widetilde{W_{p,r}} \cong V^{\HVir} (h_{p,r}, (1-p)c_{L,I} )$.
  \item[(2)]  Singular vectors in $\widetilde{ W_{p,r}} \cong V^{\HVir} (h_{p,r}, (1-p)c_{L,I} )$ are
  $$ Sing= \{ v_{p,r-2n}  \ \vert \ n \ge 0\}.$$
 \end{theorem}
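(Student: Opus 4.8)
The plan is to prove both statements by transporting the structural information already established for the \emph{undeformed} module to the deformed one. The key observation from Theorem~\ref{deformation} is that the deformation only changes the Virasoro action via $\widetilde{L(n)} = L(n) + u_n$, while $\widetilde{I(n)} = I(n)$ is unchanged. Thus, as a vector space, $\widetilde{W_{p,r}}$ is the same underlying space spanned by the undeformed action, and the highest-weight data is unchanged at the top: $e^{\frac{p-1}{2}d^2 + \frac{1-r}{2}c}$ remains a highest-weight vector of weight $(h_{p,r}, (1-p)c_{L,I})$, since the positive modes $u_n$ for $n > 0$ act the same way as in $\Pi(0)$ and annihilate the cyclic generator. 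The central difference is that $\widetilde{L(0)} = L(0) + Q$, so $\widetilde{L(0)}$ is no longer semisimple and the old singular vectors get entangled.

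For part~(1), I would prove $\widetilde{W_{p,r}} \cong V^{\HVir}(h_{p,r},(1-p)c_{L,I})$ by showing the cyclic generator satisfies the universal property of the Verma module, i.e.\ that $\widetilde{W_{p,r}}$ has no singular vector of level $p$. Recall that in the undeformed $W_{p,r}$, Proposition~\ref{singular-formula} gives a genuine relation $\Phi_p(L,c)\, v_{p,r} = 0$ coming from $\mathcal{Q} = 0$, which is precisely what \emph{kills} the would-be singular vector and makes $W_{p,r}$ irreducible rather than a Verma module. The heart of the argument is that after deformation this relation \emph{fails}: applying $\widetilde{L(0)} = L(0) + Q$ in place of $L(0)$ inside $\Phi_p$, the extra term $Q$ acting on $S_p(-c)\, v_{p,r}$ produces a nonzero contribution (using the relation $Q v_{-p,\cdots} = S_p(c)\cdots$ type identities, or more directly that $Q$ shifts $r \mapsto r-2$ and does not annihilate the relevant vector). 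So the element $\Phi_p(\widetilde{L},c)\,v_{p,r}$, which would have to vanish for the module to be a proper quotient, is instead nonzero and equals a nonzero multiple of $v_{p,r-2}$ up to the $Q$-correction. Comparing graded dimensions, since $\widetilde{W_{p,r}}$ and $W_{p,r}$ have the same underlying space and $W_{p,r}$ is the irreducible quotient of $V^{\HVir}(h_{p,r},(1-p)c_{L,I})$ by the submodule generated by the level-$p$ singular vector, the deformed module has \emph{at least} the graded dimension of the Verma module; one then checks it has exactly that dimension, forcing $\widetilde{W_{p,r}} \cong V^{\HVir}(h_{p,r},(1-p)c_{L,I})$.

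For part~(2), I would identify the singular vectors directly. In a Verma module with the stated highest weight, which by the Billig criterion $h_I/c_{L,I} - 1 = -p \in \Z \setminus \{0\}$ is reducible, the singular vectors occur at the weights reached by lowering $h_{p,r}$ by multiples of $p$; the vectors $v_{p,r-2n}$ for $n \ge 0$ all lie in $\widetilde{\Pi(p,r)}$ and, by the remark $h_{p,r+2} = h_{p,r} - p$, they sit exactly at these predicted weights. I would verify that each $v_{p,r-2n}$ is annihilated by all $\widetilde{L(k)}$, $\widetilde{I(k)}$ for $k > 0$: since these are highest-weight vectors for the \emph{undeformed} action (they generate $W_{p,r-2n}$), and the positive deformation modes $u_k$, $k>0$, act as raising operators that annihilate them, they remain singular after deformation. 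Conversely, to show there are no others, I would use the filtration structure and the fact that the Verma module of this highest weight has its singular vectors exactly on this single chain, so the count matches.

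The main obstacle I expect is the nonvanishing computation at the core of part~(1): one must show precisely that the deformation term $Q$ prevents $\Phi_p(\widetilde{L},c)\,v_{p,r}$ from vanishing, i.e.\ that $Q\bigl(S_p(-c)\,v_{p,r}\bigr)$ contributes a genuinely nonzero element rather than being absorbed into the other terms of the relation. This requires carefully tracking how $Q$ interacts with the Schur-polynomial coefficients and the $L(-i)$ terms appearing in $\Phi_p$, using the commutation $[Q, L(n)] = 0$ together with the explicit action $Q v_{p,r} = S_p(c)\, v_{p,r-2}$-type formulas; the bookkeeping between $S_p(c)$ and $S_p(-c)$ and the grading shift $r \mapsto r-2$ is where the argument could most easily go wrong.
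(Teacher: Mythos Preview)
Your overall strategy---showing that $\Phi_p(\widetilde L,c)\,v_{p,r}\ne 0$ and equals (a multiple of) $v_{p,r-2}$, and then iterating---is exactly the paper's approach. However, the key computation at the heart of part~(1) is misidentified, and the argument as written would not go through.

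You locate the nonvanishing in the $\widetilde{L(0)}=L(0)+Q$ term, claiming that ``$Q$ acting on $S_p(-c)\,v_{p,r}$ produces a nonzero contribution'' and invoking a formula of the type $Q v_{p,r}=S_p(c)\,v_{p,r-2}$. But for $p\ge 1$ one has $e^c_k\,v_{p,r}=0$ for all $k\ge 1-p$; in particular $Q=e^c_0$ annihilates $v_{p,r}$, and since $[e^c_0,c(-j)]=0$ it annihilates $S_p(-c)\,v_{p,r}$ as well. The formula you quote is the one valid for the \emph{opposite} sign of the index (equation~(\ref{Q-nedef}) for $v_{-p,r}$). So the $\widetilde{L(0)}$ term contributes nothing. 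The genuine source of the nonvanishing is the $\widetilde{L(-p)}$ term: since $e^c_{-i}\,v_{p,r}=0$ for $1\le i\le p-1$ but $e^c_{-p}\,v_{p,r}=v_{p,r-2}$, one gets
\[
\Phi_p(\widetilde L,c)\,v_{p,r}=\Phi_p(L,c)\,v_{p,r}+e^c_{-p}\,S_0(-c)\,v_{p,r}=0+v_{p,r-2},
\]
using the undeformed relation $\Phi_p(L,c)\,v_{p,r}=0$ from Proposition~\ref{singular-formula}. Iterating gives $v_{p,r-2n}\in\widetilde{W_{p,r}}$ for all $n\ge 0$, which is precisely what forces the module to be the full Verma module.

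There is also a secondary confusion: you write that ``$\widetilde{W_{p,r}}$ and $W_{p,r}$ have the same underlying space''. They do not. Both sit inside the common ambient space $\Pi(p,r)=\widetilde{\Pi(p,r)}$, but $W_{p,r}$ is the cyclic $\HVir$-submodule for the undeformed action (irreducible, hence strictly smaller than a Verma module), while $\widetilde{W_{p,r}}$ is the cyclic submodule for the deformed action and, as the computation above shows, contains all the $v_{p,r-2n}$. Consequently your graded-dimension comparison is inverted: a cyclic highest-weight module is always a \emph{quotient} of the Verma module, so the task is to rule out relations, not to bound the dimension from above. Once you know each singular-vector expression $\Phi_p(\widetilde L,c)\,v_{p,r-2n}=v_{p,r-2(n+1)}\ne 0$, no relation in the Verma module can hold, and parts~(1) and~(2) follow together.
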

 \begin{proof}
It is clear that $ v_{p,r-2n}=  e^{\frac{p-1}{2} d^2 + \frac{1-r}{2} c + nc} $ is a singular vector for any $n \ge 0$. We only need to prove that 
\bea  v_{p,r-2n} \in \widetilde{ W_{p,r}}. \label{claim-chain}\eea
 Assume that  (\ref{claim-chain}) holds for $n \in {\Z}_{\ge 0}$. Since $e^c_ k v_{p,r-2n}  = 0$  for every $k \ge 1-p$ we have 
$$\widetilde{L(k)} = L(k) \quad \mbox{on} \ \  {\C}[c(-1), c(-2), \dots ] . v_{p,r-2n} .  $$
But since  $e^c_ {-p}  v_{p,r-2n}  =  v_{p,r-2(n+1) } $  we have
$$ \widetilde{L(-p)}  v_{p,r-2n}  = L(-p)   v_{p,r-2n}   +  v_{p,r-2(n+1) } . $$  
By using the expression for singular vector in  $ V^{\HVir} (h_{p,r}, (1-p)c_{L,I} )$ from Proposition \ref{singular-formula} we get for $h=h_{p,r-2n }$ 
\bea
 && \ \Phi_{p} (\widetilde L, c) \cdot  v_{p,r-2n}   \nonumber \\
=&&    \sum_{i = 1}^{p}   \left(  \widetilde  L(-i) S_{p-i} (- c) \right)   +   S_p (-c)   \left( h  +   \frac{c_L- 2}{24} (p-1)  \right )    \nonumber \\   &&  -  \frac{c_L- 26}{24} \left(  \sum_{i = 1}  ^{p} (i-1) c(-i )  S_{p-i} (-c)  ) \right)    v_{p,r-2n}  \nonumber \\
=&& \Phi_{p} (L, c) \cdot v_{p,r-2n}   +   v_{p,r-2(n+1)}  \nonumber \\
= &&   v_{p,r-2(n+1) } . \nonumber 
\eea
(Above we used  the fact that $  v_{p,r-2n}  $ generates the irreducible $\HVir$--module  $ L^{\HVir} (h_{p,r}, (1-p)c_{L,I} )$ for the undeformed action, so  $\Phi_{p} (L, c)\cdot v_{p,r-2n}  = 0$.)

Thus  we get that $v_{p,r-2(n+1)} = e^{\frac{p-1}{2} d^2 + \frac{1-r}{2} c +(n+1)c}$ belongs to $\widetilde{ W_{p,r}}$. The claim now follows by induction.
 \end{proof}
Finally, we obtain a deformed version of Theorem \ref{filtracija-1}.
\begin{theorem} \label{filtracija-2} 
Let $  Z  ^{(m )}   =\Ker_{ \widetilde {\Pi(p,r)} } Q ^{m+1} $. Then we have 
\begin{itemize}
\item[(1)] $\widetilde{ \Pi(p,r) }\cong \bigcup_{m \ge 0} Z  ^{(m)} ,  \quad \Pi(0)  ^{(n)} \cdot Z  ^{(m )}  \subset { Z  }  ^{(n+m)} $.
\item[(2)]  For every $m \in {\Z}_{\ge 0} $,  $ Z  ^{(m )} $ is a logarithmic  $\overline{\Pi(0)}$--module  of rank $m+1$ with respect to $\widetilde{L(0)}$.
\item[(3)] $ Z  ^{(m)} /  Z  ^{(m-1)}$ is a weight $\overline{\Pi(0)}$--module  which is as  $\HVir$--module isomorphic to 
 \bea   \bigcup_{n \in {\Z} } \widetilde {W_{p,r-2n}} . \label{dec-4} \eea
\end{itemize}
\end{theorem}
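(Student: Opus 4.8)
The plan is to deduce everything from the undeformed Theorem~\ref{filtracija-1}, exploiting the fact that $Q=u_0$ is literally the same operator on the common underlying space of $\Pi(p,r)$ and $\widetilde{\Pi(p,r)}$, so that $Z^{(m)}=\Ker_{\widetilde{\Pi(p,r)}}Q^{m+1}$ coincides as a subspace with $\Pi(p,r)^{(m)}=\Ker_{\Pi(p,r)}Q^{m+1}$. The one computational input I would record first is that $Q$ commutes with the deformed action: since $\widetilde{I(n)}=I(n)$ and $\widetilde{L(n)}=L(n)+u_n$ by (\ref{deformed-action}), while $[Q,I(n)]=[Q,L(n)]=0$ and $[Q,u_n]=[e^c_0,e^c_n]=0$ by Lemma~\ref{sub-sing}, we obtain $[Q,\widetilde{I(n)}]=[Q,\widetilde{L(n)}]=0$. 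Because $u_0$ commutes with every $u_n$, it commutes with $\Delta(u,z)$, and the Borcherds commutator formula $[u_0,Y_M(b,z)]=Y_M(u_0 b,z)$ then yields the clean identity $[Q,\widetilde{Y}(a,z)]=\widetilde{Y}(Qa,z)$ for all $a\in\Pi(0)$.

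Part~(1) is then formal. The exhaustion $\widetilde{\Pi(p,r)}=\bigcup_m Z^{(m)}$ is immediate from $Z^{(m)}=\Pi(p,r)^{(m)}$ and Theorem~\ref{filtracija-1}(1). For the inclusion $\Pi(0)^{(n)}\cdot Z^{(m)}\subset Z^{(n+m)}$, take $a\in\Pi(0)^{(n)}=\Ker Q^{n+1}$ and $v\in Z^{(m)}=\Ker Q^{m+1}$; expanding $Q^{n+m+1}\widetilde{Y}(a,z)v$ with the identity above and the binomial theorem, each resulting term carries a factor $Q^{k}a$ with $k\geq n+1$ or a factor $Q^{k'}v$ with $k'\geq m+1$, and hence vanishes, so the components of $\widetilde{Y}(a,z)v$ lie in $Z^{(n+m)}$. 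For part~(2), $Z^{(m)}$ is a $\overline{\Pi(0)}$-submodule by the case $n=0$ of this inclusion. By Theorem~\ref{deformation} the nilpotent part of $\widetilde{L(0)}$ is exactly $Q$; since $Q^{m+1}=0$ on $Z^{(m)}$ while $Q^{m}\neq 0$ there—the inclusion $Z^{(m-1)}\subset Z^{(m)}$ being strict because the subquotients in Theorem~\ref{filtracija-1}(3) are nonzero—we conclude $(\widetilde{L(0)}-\widetilde{L_{ss}(0)})^{m+1}=0\neq(\widetilde{L(0)}-\widetilde{L_{ss}(0)})^{m}$, i.e.\ $Z^{(m)}$ has rank $m+1$.

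For part~(3) I would first settle the base case $m=0$, where $Z^{(-1)}=0$ and $Z^{(0)}=\Ker Q$. Each $v_{p,r-2n}$ is a deformed singular vector, being annihilated by $\widetilde{L(k)}$ and $\widetilde{I(k)}$ for $k>0$ (since $e^c_k v_{p,r-2n}=0$ for $k\geq 1-p$), and by Theorem~\ref{verma-realization} it generates a deformed Verma submodule $\widetilde{W_{p,r-2n}}\cong V^{\HVir}(h_{p,r-2n},(1-p)c_{L,I})$; these are nested along the singular-vector chain, so $\bigcup_{n}\widetilde{W_{p,r-2n}}$ is a deformed submodule of $Z^{(0)}$. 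Comparing $\widetilde{L(0)}$-graded dimensions—the union has each $W_{p,r-2k}$, $k\in\Z$, as a composition factor exactly once, matching the undeformed decomposition $Z^{(0)}\cong\bigoplus_n W_{p,r-2n}$ of Theorem~\ref{filtracija-1}(3)—forces $Z^{(0)}=\bigcup_n\widetilde{W_{p,r-2n}}$. For general $m$, the operator $Q^m$ commutes with the deformed action and induces an injection $Z^{(m)}/Z^{(m-1)}\hookrightarrow Z^{(0)}$ of $\HVir$-modules with kernel $Z^{(m-1)}$; by (\ref{Q-sub-sing}) its image contains $Q^m v^{(m)}_{p,r-2\ell}=v_{p,r-2(\ell+m)}$ for every $\ell\in\Z$, i.e.\ all the generators of $Z^{(0)}$, so the map is onto and $Z^{(m)}/Z^{(m-1)}\cong Z^{(0)}\cong\bigcup_n\widetilde{W_{p,r-2n}}$. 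Finally $Q(Z^{(m)})\subset Z^{(m-1)}$, so $Q$ acts as $0$ on the quotient, whence $\widetilde{L(0)}=\widetilde{L_{ss}(0)}$ is semisimple there and $Z^{(m)}/Z^{(m-1)}$ is a weight module.

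The main obstacle is the base case of part~(3): controlling the deformed module structure on $\Ker Q$ and proving that the chain of nested Verma modules exhausts it. Everything else reduces, through the identity $[Q,\widetilde{Y}(a,z)]=\widetilde{Y}(Qa,z)$ and the identification $Z^{(m)}=\Pi(p,r)^{(m)}$, to bookkeeping layered on top of the undeformed Theorem~\ref{filtracija-1} together with the Verma realization of Theorem~\ref{verma-realization}.
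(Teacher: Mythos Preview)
Your argument is correct. For parts (1) and (2) it matches the paper's (very brief) proof, though you are more explicit about why $\Pi(0)^{(n)}\cdot Z^{(m)}\subset Z^{(n+m)}$ via the identity $[Q,\widetilde{Y}(a,z)]=\widetilde{Y}(Qa,z)$; the paper simply declares these ``clear'' and verifies rank $m+1$ the same way you do.

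For part (3) you take a genuinely different route. The paper works directly in each layer: it uses the explicit cosingular vectors $v_{p,r-2\ell}^{(m)}$, computes (formula (\ref{formula-3})) that $\widetilde{L(-j)}v_{p,r-2\ell}^{(m)}\equiv L(-j)v_{p,r-2\ell}^{(m)}\bmod Z^{(m-1)}$ for $j<p$ and picks up the shift at $j=p$, and then repeats the $\Phi_p(\widetilde L,c)$ computation from Theorem~\ref{verma-realization} to show that $v_{p,r-2\ell}^{(m)}+Z^{(m-1)}$ generates a Verma module inside the quotient. You instead settle the base case $Z^{(0)}=\bigcup_n\widetilde{W_{p,r-2n}}$ by a character comparison (valid because $\widetilde{L(0)}=L(0)$ on $\Ker Q$) and then transport the result to every layer via the $\mathcal H$--equivariant isomorphism $Q^m:Z^{(m)}/Z^{(m-1)}\xrightarrow{\ \sim\ }Z^{(0)}$. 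Your approach is cleaner and avoids redoing the singular-vector computation at each level; the paper's approach, on the other hand, has the advantage of pinpointing explicit highest weight representatives $v_{p,r-2\ell}^{(m)}+Z^{(m-1)}$ in every subquotient, which is what is depicted in Figure~\ref{def-diagram}.
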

\begin{proof}

Assertion (1) is clear. Using relation (\ref{Q-sub-sing}) in the proof of Lemma \ref{sub-sing} we see that $v_{p,r-2\ell}^{(m)} \in Z^{(m)} \setminus Z^{(m-1)}$. 
Since
$\widetilde{L(0)}  - \widetilde{L_{ss}(0)} = Q$,  we have that   $  Z  ^{(m )} $ is a logarithmic module of $\widetilde{L(0)}$--nilpotent rank $m+1$ so (2) holds.
Assertion (3) results from following facts:

\begin{itemize}

\item [(a)] As an  $\HVir$--module $\widetilde{ \Pi(p,r) }$ is generated by set of vectors
 $$\{v_{p,r-2\ell  } \ \vert \ \ell  \in {\Z} \} \bigcup \{ v_{p,r-2\ell } ^{(m)}, \ \vert m, \ell  \in {\Z}, m \ge 1\}. $$
 
 \item [(b)]$ Z  ^{(m)} /  Z  ^{(m-1)}$ is a weight $\HVir$--module (i.e., non-logarithmic) generated by vectors $\{ v_{p,r-2 j }^{(m)}  + Z  ^{(m-1)} \vert \ \ell  \in {\Z} \}$.
 
 \item  [(c)] $v_{p,r-2\ell}^{(m)} + Z^{(m-1)}$ generates the Verma module  $\widetilde {W_{p,r-2\ell }}$.
 
\end{itemize}
Since $Q$ and $e^c_{-j}$ commute and by using (\ref{Qm}) we get
\bea
\nonumber Q^m e^c_{-j} v_{p,r-2\ell}^{(m)} = e^c_{-j} v_{p,r-2(\ell+m)} = S_{j-p}(c) v_{p,r-2(\ell+m)}
\eea
so $e^c_{-j} v_{p,r-2\ell}^{(m)} \in Z^{(m-1)}$ for $j<p$. Therefore 
\bea
\label{formula-3} &&
\widetilde{L(-j)}v_{p,r-2\ell}^{(m)} = 
\begin{cases}
L(-j)v_{p,r-2\ell}^{(m)} \bmod Z^{(m-1)}, &j<p,\\
\\
L(-p)v_{p,r-2\ell}^{(m)} + v_{p,r-2(\ell+1)}^{(m)}, &j=p.
\end{cases}
\eea
The proof of claims  (a) and (b)   easily follows from Theorem \ref{filtracija-1}  and (\ref{formula-3}). Let us prove claim (c).

We have  proved in  (\ref{formula-3})  that  $v_{p,r-2\ell}^{(m)} + Z^{(m-1)}$ is a highest weight vector with highest weight $(h_{p,r-2\ell}, (1-p) c_{L,I})$. Now, repeating the same arguments as in the proof of  Theorem \ref{verma-realization} we get
\bea
 && \Phi_{ p} (\widetilde L, c)\cdot  v_{p,r-2\ell}^{(m)}    \nonumber \\
=&&  \Phi_{ p} ( L, c) \cdot  v_{p,r-2\ell}^{(m)}   +  v_{p,r-2(\ell+1)}^{(m)} \bmod Z^{(m-1) } \nonumber \\
= &&   v_{p,r-2(\ell+1)}^{(m)} \bmod Z^{(m-1) }.   \nonumber 
\eea
This implies that $v_{p,r-2\ell}^{(m)} + Z^{(m-1)}$ generates the Verma module  $\widetilde    {W_{p,r-2\ell} }$ which contains all Verma modules  $\widetilde  {W_{p,r-2(\ell + j)} }$, $ j \in {\Z}_{\ge 1}. $  This completes the proof.
(See also Figure \ref{def-diagram} in Appendix \ref{app} where one can follow steps in the proof.)
\end{proof}

\subsection{Case $h_I=c_{L,I}$}
Note that $\widetilde{\Pi(0, r)}$ is an ${\HVir}$--module on which $I(0)$ acts as multiplication by $c_{L,I}$.

In particular,
$\widetilde {W_{0,r} } $ is a ${\Zp}$--graded logarithmic $\HVir$--module whose lowest component is $$\widetilde {W_{0,r} }(0) := \mbox{span}_{\C} \{ v_{0,r-2\ell} \ \vert \ \ell \in \Zp \} . $$
Moreover, since 
$$\widetilde{L(0)} -  \frac{c_L-2}{24}  = Q, \quad  Q ^n v_{0,r} = v_{0,r-2n} $$ we conclude that $\widetilde {W_{0,r} } $  is a $\Zp$--graded logarithmic module of infinite rank. See Figure \ref{diag-0} in Appendix \ref{app}.

\subsection{Case $h_I=(1+p)c_{L,I}$}
We saw that $\Pi(-p,r)$ contains a descending chain of submodules $\Pi(-p,r)^{(m)}$ isomorphic to $\Pi(-p,r)$ (Theorem \ref{nedef}). 

\begin{theorem} \label{p-neg} Let $p\in\N$. Then $\widetilde{\Pi(-p,r)}$ is a logarithmic  $\overline{\Pi(0)}$--module such that
$$ \dim {\C}[\widetilde{L(0)} ] v = \infty \quad \mbox{for every} \  v \in \widetilde{\Pi(-p,r)}. $$
Quotient $\widetilde{\Pi(-p,r)}/\widetilde{\Pi(-p,r)}^{(1)}$ is a weight module such that $$\widetilde{L(n)}v_{-p,r} = S_{p-n}(c)v_{-p,r-2} \ \    (1\leq n\leq p) \quad  \mbox{and} \quad \widetilde{L(n)}v_{-p,r}  = 0 \ \ (n > p). $$
\end{theorem}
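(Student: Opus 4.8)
The plan is to build everything on the identity $\widetilde{L(0)} = \widetilde{L_{ss}(0)} + Q$ supplied by Theorem \ref{deformation} (so $\widetilde{L_{ss}(0)} = L(0)$ is the undeformed weight operator and the ``nilpotent'' part is the screening operator $Q = u_0$), combined with the injectivity of $Q$ on $\Pi(-p,r)$ proved in Theorem \ref{nedef}. That $\widetilde{\Pi(-p,r)}$ is an $\overline{\Pi(0)}$--module is immediate from Theorem \ref{deformation}. I would first check that $Q$ commutes with the whole deformed action: $[Q,\widetilde{I(n)}]=[Q,I(n)]=0$, while $[Q,\widetilde{L(n)}]=[u_0,L(n)]+[u_0,u_n]$ vanishes because $Q$ is a screening operator and $[u_0,u_n]=(u_0u)_n=0$ (here $\langle c,c\rangle=0$ forces $e^c_0e^c=0$). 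Hence $\widetilde{L_{ss}(0)}$ and $Q$ commute, $(\widetilde{L(0)}-\widetilde{L_{ss}(0)})^m=Q^m$, and injectivity of $Q$ gives $Q^m\neq0$ for every $m$; this is precisely the assertion that $\widetilde{\Pi(-p,r)}$ is logarithmic of infinite rank.

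For $\dim\C[\widetilde{L(0)}]v=\infty$ I would argue in two reductions. Since $\widetilde{L(0)}$ preserves each $\widetilde{L_{ss}(0)}$--eigenspace $M_\lambda$ (both $L(0)$ and $Q$ do), projecting a nonzero $v$ to a nonzero homogeneous component reduces the claim to $v\in M_\lambda$. For such $v$ one has $\widetilde{L(0)}=\lambda+Q$, and a unipotent change of basis (the coefficient of $Q^k$ in $\widetilde{L(0)}^kv$ is $1$) shows $\C[\widetilde{L(0)}]v=\mathrm{span}\{Q^jv:j\ge0\}$, so the claim becomes linear independence of the $Q^jv$. Here I would invoke the $\HVir$--decomposition $\Pi(-p,r)=\bigoplus_{\ell\in\Z}W_{-p,r+2\ell}$ from the proof of Theorem \ref{nedef} as a \emph{charge} grading on which $Q$ is homogeneous of degree $-1$ (from $Q(W_{-p,r+2\ell})\subset W_{-p,r+2(\ell-1)}$, consistent with (\ref{Q-nedef})). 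A single vector $v$ has finite charge support, hence a well-defined top charge $\ell_{\max}$; by injectivity the charge--$(\ell_{\max}-j)$ component of $Q^jv$ is $Q^jv_{\ell_{\max}}\neq0$, so the $Q^jv$ have strictly decreasing top charges and a leading--term argument yields their independence. \textbf{This leading--term step is the crux and the part I expect to demand the most care}, since it is where injectivity of $Q$ and the charge grading must be combined.

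Finally, for the quotient I would note that $u=e^c\in\overline{\Pi(0)}=\Ker Q$ (as $Qu=e^c_0e^c=0$), so $Q=u_0$ is an $\overline{\Pi(0)}$--module endomorphism and $\widetilde{\Pi(-p,r)}^{(1)}=\Img Q$ is a submodule. On the quotient $Q$ acts as $0$, whence $\widetilde{L(0)}=\widetilde{L_{ss}(0)}$ is semisimple and the quotient is a weight module. To read off the action on $v_{-p,r}$ I would use $\widetilde{L(n)}=L(n)+e^c_n$: for $n\ge1$ the term $L(n)v_{-p,r}$ vanishes because $v_{-p,r}$ is the highest weight vector of the Verma module $W_{-p,r}\cong V^{\HVir}(h_{-p,r},(1+p)c_{L,I})$ (Proposition \ref{kratka-verzija}(1)), while the lattice formula $Y(e^c,z)e^\gamma=z^{\langle c,\gamma\rangle}\exp\!\big(\sum_{k\ge1}\tfrac{c(-k)}{k}z^k\big)e^{c+\gamma}$ with $\gamma=\tfrac{-p-1}{2}d^2+\tfrac{1-r}{2}c$ and $\langle c,\gamma\rangle=-p-1$ expands, through the Schur generating function (\ref{schur-def}), as $\sum_{j\ge0}S_j(c)\,v_{-p,r-2}\,z^{j-p-1}$. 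Extracting the coefficient of $z^{-n-1}$ gives $e^c_nv_{-p,r}=S_{p-n}(c)v_{-p,r-2}$, which equals $S_{p-n}(c)v_{-p,r-2}$ for $1\le n\le p$ and $0$ for $n>p$ (since $S_{<0}=0$), exactly the stated formulas; the $n=0$ case reproduces (\ref{Q-nedef}), confirming the normalization.
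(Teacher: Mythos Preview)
Your argument is correct and rests on the same two ingredients the paper uses---the identity $\widetilde{L(0)}=L(0)+Q$ from Theorem~\ref{deformation} and the injectivity of $Q$ on $\Pi(-p,r)$ from Theorem~\ref{nedef}---but the organization differs. The paper opens by proving that $\widetilde{\Pi(-p,r)}$ is generated as an $\HVir$--module by the family $\{v_{-p,r-2\ell}\}$ (via an induction on the number of $L$--factors in a monomial), then records $Q^n v_{-p,r}=(S_p(c))^n v_{-p,r-2n}\ne 0$ and invokes commutation of $Q$ with $\HVir$ to conclude the infinite--rank claim; the derivation of the $\widetilde{L(n)}v_{-p,r}$ formulas is left implicit. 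You instead bypass the generation step and give a direct, self--contained leading--charge argument for the linear independence of $\{Q^j v\}$ for an arbitrary $v$, together with an explicit lattice computation of $e^c_n v_{-p,r}$. Your route is arguably cleaner for the statement as written; the paper's route has the side benefit of exhibiting an explicit $\HVir$--generating set for $\widetilde{\Pi(-p,r)}$, which is useful structural information even though it is not part of the theorem's assertions.
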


\begin{proof}
Let  $S= \{ v_{-p,r-2\ell} \ \vert \ \ell \in \Z \}$. Let $\langle S \rangle $ be the $\HVir$--submodule generated by the set $S$. We shall  first prove that    $\widetilde{\Pi(-p,r)}  = \langle S \rangle $.  Since, as a vector space $\widetilde{\Pi(-p,r)} \cong \Pi(-p,r) \cong \bigoplus_{\ell\in\Z} W_{-p,r-2\ell}$,   it suffices to show that  $W_{-p,r-2\ell } \subset \langle S\rangle$ for each $\ell$.

 Take an arbitrary basis vector
$$ u = c(-p_1) \cdots c(-p_s) L(-q_1) \cdots L(-q_m)   v_{-p, r-2\ell} $$
of  $W_{-p,r-2\ell}$,  where  $\ell \in {\Z}  $, $p_1, \dots , p_s, q_1, \dots, q_m\ge 1$.
Then  by applying formula for  $\widetilde{L(n)}$  and  relation  $[e^c_{m},L(n)]=me^c_{m+n}$  we get
\bea &&     c(-p_1) \cdots c(-p_s)    \widetilde{L(-q_1)}  \cdots  \widetilde{L(-q_m )}  v_{-p, r-2\ell}   \nonumber \\
= &&  c(-p_1) \cdots c(-p_s) L(-q_1) \cdots L(-q_m)    v_{-p, r-2\ell}   + w \nonumber \eea
where  $w$ is a linear combination of vectors 
$$  c(-t_1) \cdots c(-t _{s'} ) L(-u_1) \cdots L (-u_{m'} )v_{-p, r-2\ell ' }  $$
such that $\ell '  \in {\Z} $, 
$ m' < m $ . The assertion now follows by induction on $m$.

Furthermore, we have
$$(\widetilde{L(0)}-L(0))^n v_{-p,r} = Q^n v_{-p,r} = (S_{p}(c))^n v_{-p,r-2n}.$$ Since $Q$ commutes with action of $\HVir$, we proved the first claim.

Taking a quotient by $\widetilde{\Pi(-p,r)}^{(1)} = \Img Q$ results in a weight module (i.e.,\ non logarithmic module) on which $\widetilde{L(0)} \equiv L(0)$.
\end{proof}
See  Figure \ref{diag-1} in Appendix \ref{app}.

\begin{remark}
As far as we know, modules $\widetilde{\Pi(-p,r)}/\widetilde{\Pi(-p,r)}^{(1)}$, and their cyclic submodules generated by images of $v_{-p,r-2\ell}$ are weight $\HVir$--modules which haven't been analysed in the literature.
\end{remark}

\section{Realization of self-dual modules via Whittaker \texorpdfstring{$\Pi(0)$}{PI(0)}--modules}\label{whittaker}
In Section \ref{verma} we slightly refined the free field realization from \cite{AR1}, but these results still don't give a realization of all irreducible self-dual modules. In order to construct all self-dual modules we shall apply the deformation from Section \ref{verma} on the Whittaker $\Pi(0)$--module $\Pi_{\lambda}$ which was constructed in \cite[Section 11]{ALZ} and used for a realization of Whittaker $A_1 ^{(1)}$--modules at the critical level. 
As a by-product we shall see that self-dual modules for $\HVir$ have non-trivial self-extensions which are logarithmic modules.
\label{self-dual}

\subsection{Whittaker $\Pi(0)$--module $\Pi_{\lambda}$ }

We shall recall the construction of a Whittaker $\Pi(0)$--module $\Pi_{\lambda}$  from \cite[Section 11]{ALZ}.
Let $u = e^c$, $u ^{-1} = e ^{-c}$.
\begin{theorem} \label{ired-pi}  \cite{ALZ} Assume that $\lambda \ne 0$.
There is a $\Pi(0)$--module $\Pi_{\lambda}$ generated by the cyclic vector $w_{\lambda}$ such that $c(0) = - \mbox{Id}$ on $\Pi_{\lambda}$ and 
$$ u_0  w_{\lambda} = \lambda w_{\lambda},  \quad
u^{-1} _0  w_{\lambda} = \frac{1}{\lambda}  w_{\lambda}, \quad \ u_n w_{\lambda} =u^{-1} _n w_{\lambda} = 0  (n \ge 1). $$
As a vector space
$$\Pi_{\lambda} \cong {\C} [d(-n), c(-n-1) \  \vert n \ge 0 ] = {\C}[d(0)] \otimes M (1).$$

$\Pi_{\lambda}$ is ${\Zp}$--graded
 $$\Pi_{\lambda} = \bigoplus_{ n \in {\Z}_{\ge 0} } \Pi_{\lambda} (n) $$
 and lowest component is isomorphic to ${\C}[d(0)]$.
\end{theorem}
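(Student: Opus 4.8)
The plan is to realize $\Pi_\lambda$ explicitly as a Fock-type space, to define the action of the generating fields of $\Pi(0)$ by suitably modified lattice vertex operators, and then to verify the module axioms and read off the stated properties. Concretely, I would take the underlying vector space to be $U = \mathbb{C}[d(0)]\otimes M(1)$, set $w_\lambda = 1\otimes\mathbf{1}$, and let the Heisenberg modes act in the standard Fock manner: $c(-n), d(-n)$ $(n\geq 1)$ by multiplication, $c(n), d(n)$ $(n\geq 1)$ as the corresponding annihilation operators dictated by $[c(m),d(n)]=2m\delta_{m+n,0}$ and $[c(m),c(n)]=[d(m),d(n)]=0$, together with $c(0)=-\mathrm{Id}$ and $d(0)$ acting as multiplication by the polynomial variable. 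This simply says that $U$ is the $\widehat{\mathfrak h}$--Fock module in which $c(0)$ is fixed to $-1$ while $d(0)$ is kept as a free (non-diagonalized) coordinate; the asymmetry is permitted precisely because $\langle c,c\rangle = 0$ and $[c(0),d(0)]=0$.

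The heart of the construction is the definition of the charged fields $Y(e^{\pm c},z)$. Since $\langle c,c\rangle=\langle c,-c\rangle=0$, the only nontrivial pairing is $\langle d,c\rangle=2$, which through $[d(0),e^{\pm c}_j]=\pm 2\, e^{\pm c}_j$ forces $e^{\pm c}$ to translate the coordinate $d(0)$. I would therefore set
$$Y(e^{\pm c},z) = \lambda^{\pm 1}\, z^{\pm c(0)}\exp\!\Big(\pm\sum_{n\geq 1}\tfrac{c(-n)}{n}z^{n}\Big)\exp\!\Big(\mp\sum_{n\geq 1}\tfrac{c(n)}{n}z^{-n}\Big)\,T_{\mp 2},$$
where $z^{\pm c(0)}=z^{\mp 1}$ on $U$ and $T_{\mp 2}$ is the shift $d(0)\mapsto d(0)\mp 2$. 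The scalars $\lambda^{\pm 1}$ replace the group-algebra shift operators $e^{\pm c}$ of the ordinary lattice construction, and this substitution is exactly what produces Whittaker (rather than weight) behaviour. I would then verify that these operators, together with the Heisenberg fields, satisfy the locality and associativity relations defining a $\Pi(0)$--module; because all the relevant inner products either vanish or equal $2$, the computation reduces to the usual lattice-VOA one, the essential checks being the normalization that makes $e^c_{(-1)}e^{-c}$ reproduce the vacuum and the mutual commutativity $[Y(e^c,z),Y(e^{-c},w)]=0$.

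Granting the module structure, the Whittaker relations fall out of the explicit $z$--expansions on $w_\lambda$: using $c(n)w_\lambda=0$ for $n\geq 1$ and $T_{\mp 2}w_\lambda=w_\lambda$ one gets $Y(e^{c},z)w_\lambda = \lambda z^{-1}\exp(\sum_{n\geq 1}\tfrac{c(-n)}{n}z^{n})w_\lambda$ and $Y(e^{-c},z)w_\lambda = \lambda^{-1}z\exp(-\sum_{n\geq 1}\tfrac{c(-n)}{n}z^{n})w_\lambda$, so that the lowest modes act by $\lambda^{\pm 1}$ while all strictly positive (weight-lowering) modes annihilate $w_\lambda$, matching the stated relations under the conformal-weight indexing $\mathrm{wt}(e^{\pm c})=\pm 1$. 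The vector-space identification $\Pi_\lambda\cong\mathbb{C}[d(0)]\otimes M(1)$ and the grading with $\Pi_\lambda(0)=\mathbb{C}[d(0)]$ are then immediate from the construction, the grading being the Heisenberg degree with $d(0)$ placed in degree $0$; cyclicity of $w_\lambda$ follows since $d(0)w_\lambda$ generates $\mathbb{C}[d(0)]$ and the negative Heisenberg modes generate the $M(1)$ factor.

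The main obstacle is the middle step: proving that the explicitly defined fields genuinely assemble into a $\Pi(0)$--module, i.e. that the modified vertex operators satisfy the Jacobi identity. The delicate points are the interaction of the $d(0)$--shift operators $T_{\mp 2}$ with the cocycle and the correct constant-term normalization in the $e^{c}$--$e^{-c}$ OPE; once these are pinned down the verification is routine but bookkeeping-heavy, and I would streamline it by invoking the general machinery for modules over (half-)lattice vertex algebras rather than checking the Jacobi identity by hand.
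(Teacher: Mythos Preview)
The paper does not give its own proof of this theorem: it is recalled verbatim from \cite{ALZ} and stated without argument, so there is no in-paper proof to compare against. Your task, then, is really to reconstruct the construction from \cite{ALZ}, and what you outline is essentially that construction: take the Heisenberg Fock space with $c(0)$ specialised to $-1$ but $d(0)$ left as a free polynomial variable, and define $Y(e^{\pm c},z)$ by replacing the group-algebra shift $e^{\pm c}$ with the scalar $\lambda^{\pm 1}$ times the translation operator on $\C[d(0)]$. This is correct, and your identification of the Jacobi/associativity check as the only genuine technical content is accurate; since $\langle c,c\rangle=0$ the cocycle is trivial and the verification is the standard half-lattice one, so invoking the general machinery (e.g.\ the results of \cite{BDT} on $\Pi(0)$--modules, or Li's local-systems criterion) is the right shortcut.

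One point to tidy up: with the paper's own indexing convention $Y(e^{-c},z)=\sum_i e^{-c}_i z^{-i-1}$, your formula gives $u^{-1}_0 w_\lambda = 0$ and $u^{-1}_{-2} w_\lambda = \lambda^{-1} w_\lambda$, not $u^{-1}_0 w_\lambda = \lambda^{-1} w_\lambda$ as literally stated. You flag this (``conformal-weight indexing''), and indeed the discrepancy is a notational carry-over from \cite{ALZ} rather than a defect in your construction: the content of the statement is that the weight-preserving modes of $e^{\pm c}$ act on $w_\lambda$ by $\lambda^{\pm 1}$ and all weight-lowering modes annihilate it, which is exactly what your formulas give. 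When writing this up you should make that translation explicit rather than leaving it as a parenthetical remark.
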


Recall also (cf.\ \cite{ALZ})  that the lowest component $\Pi_{\lambda} (0) $ is an irreducible Whittaker module for the associative algebra $\mathcal{A}$ defined by generators
$$ d(0) , e^{n c}  \quad (n \in {\Z})$$
and relations
$$ [ d(0) , e^{n c} ] = 2 n  e^{n c} , \ e^{n c} e^{m  c} = e^{( n + m) c}  \quad (n,m \in \Z). $$

\subsection{Realization of self-dual modules}

Now we can apply Theorem \ref{deformation} on the Whittaker $\Pi(0)$--module $\Pi_{\lambda}$. We get $\HVir$--module $\widetilde \Pi_{\lambda}$ , which is as a vector space isomorphic to $\Pi_{\lambda}$ and the (deformed) action of $\HVir$ is as follows:
\bea  \widetilde{I(0)}  \equiv - c_{L,I} c(0) \equiv  c_{L,I} \mbox{Id} \quad \mbox{on} \ \ \widetilde  \Pi_{\lambda},  \eea
and on the lowest component  $\widetilde \Pi_{\lambda} (0)$ we have
\bea \widetilde{L(0)}   &\equiv & \frac{1}{2} d(0) ( c(0) +1)   -  \frac{c_L -2}{24}  c(0) +  u_0 \quad  \mbox{on} \  \  \widetilde \Pi_{\lambda} (0) \nonumber \\
 & \equiv &   \frac{c_L -2}{24}  \mbox{Id}  +  u_0 \quad  \mbox{on} \  \  \Pi_{\lambda} (0). \label{for-L0} \eea
 This implies:
\bea
&& \label{for-hw} \widetilde{I(0)}  w_{\lambda}  = c_{L,I}  w_{\lambda} , \quad \widetilde{L(0)} w_{\lambda}  = \left(\frac{c_L-2}{24}  + \lambda \right) w_{\lambda}.\eea

Define also the following (logarithmic)  cyclic module:
 $$ \widetilde \Pi_{\lambda} ^{(n)} = U(\HVir)\cdot d(0) ^ n w_{\lambda}. $$

\begin{lemma} \label{cyclic-n}
 We have:
$$ \widetilde \Pi_{\lambda}^{ (n+1)} \supset  \widetilde \Pi_{\lambda} ^{(n)} , \quad \widetilde \Pi_{\lambda} = \bigcup_{n\in {\Zp} } \widetilde  \Pi_{\lambda} ^{ (n)}.  $$
\end{lemma}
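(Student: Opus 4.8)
The plan is to prove the two claims of Lemma~\ref{cyclic-n} separately. The inclusion $\widetilde\Pi_\lambda^{(n)} \subset \widetilde\Pi_\lambda^{(n+1)}$ is the easier part, and the exhaustion $\widetilde\Pi_\lambda = \bigcup_n \widetilde\Pi_\lambda^{(n)}$ is the heart of the matter.

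For the inclusion, I would show that $d(0)^n w_\lambda$ lies in $U(\HVir)\cdot d(0)^{n+1} w_\lambda$. The natural tool is the deformed action $\widetilde{L(0)} = L(0) + u_0$ from Theorem~\ref{deformation}, combined with the Whittaker property $u_0 w_\lambda = \lambda w_\lambda$ with $\lambda \neq 0$. I would compute how $\widetilde{L(0)}$ (or $\widetilde{I(0)}$, or a suitable element of $U(\HVir)$) acts on $d(0)^{n+1} w_\lambda$. Since $[d(0), u_0] = [d(0), e^c_0]$ produces a factor proportional to $e^c_0 = u_0$ (recall the relation $[d(0), e^{nc}] = 2n e^{nc}$ in the algebra $\mathcal A$), applying $\widetilde{L(0)}$ to $d(0)^{n+1} w_\lambda$ should yield $\lambda\, d(0)^{n+1} w_\lambda$ plus a lower-order term in $d(0)$, namely a nonzero multiple of $d(0)^{n} w_\lambda$. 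Solving this linear relation for $d(0)^n w_\lambda$ (using $\lambda \neq 0$ to invert) exhibits $d(0)^n w_\lambda \in \widetilde\Pi_\lambda^{(n+1)}$, which gives the inclusion.

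For the exhaustion, the strategy is to show that every basis vector of $\Pi_\lambda$ lies in some $\widetilde\Pi_\lambda^{(n)}$. By Theorem~\ref{ired-pi}, as a vector space $\Pi_\lambda \cong \C[d(-m), c(-m-1) \mid m \ge 0]$, so a spanning set consists of monomials $d(-m_1)\cdots d(-m_k)\, c(-n_1-1)\cdots c(-n_j-1)\, d(0)^s w_\lambda$. I would argue that all creation operators $d(-m)$ ($m \ge 1$), $c(-m)$ ($m \ge 1$), and the remaining $d(0)$ powers are obtained by applying elements of $U(\HVir)$ to the generators $d(0)^s w_\lambda$. Since $\widetilde{L(n)} = L(n) + u_n$ and $\widetilde{I(n)} = I(n) = -c_{L,I} c(n)$, the operators $c(-m)$ are directly available from $\widetilde{I(-m)}$, and the $d(-m)$ can be extracted from $\widetilde{L(-m)}$ (whose formula in terms of $c, d$ appears via $\omega = \tfrac12 c(-1)d(-1) + \cdots$), up to correction terms lying in lower filtration pieces. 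Proceeding by induction on the grading degree and on the number of $d(0)$ factors, one concludes that every graded component $\Pi_\lambda(n)$ is reached inside $\bigcup_n \widetilde\Pi_\lambda^{(n)}$.

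The main obstacle I anticipate is bookkeeping in the exhaustion step: one must carefully verify that applying $\widetilde{L(-m)}$ produces the desired leading term $d(-m)\cdot(\text{something already captured})$ with the lower-order correction terms controllable by the induction hypothesis, since the deformation $u_n$ and the cross-terms in $\omega$ mix the $c$ and $d$ modes. The cleanest route is probably to first establish that $U(\HVir)\cdot w_\lambda$ together with the powers $d(0)^s w_\lambda$ already generates the Heisenberg field modes $c(-m)$ (immediate from $\widetilde{I}$) and the $d(-m)$ modes (from $\widetilde{L}$, modulo terms with fewer creation operators), and then invoke the vector-space decomposition of Theorem~\ref{ired-pi} to conclude exhaustion by a degree induction. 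The inclusion step, by contrast, should be short once the commutator $[d(0), u_0]$ and the Whittaker eigenvalue relations are in hand.
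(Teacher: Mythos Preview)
Your approach matches the paper's in both parts. One small gap in your inclusion argument: applying $\widetilde{L(0)}$ to $d(0)^{n+1}w_\lambda$ does not produce just ``$\lambda\, d(0)^{n+1}w_\lambda$ plus a nonzero multiple of $d(0)^n w_\lambda$''. Since $u_0 d(0)^k w_\lambda = \lambda (d(0)-2)^k w_\lambda$, you actually get \emph{all} lower powers $d(0)^{n},d(0)^{n-1},\dots,w_\lambda$ with nonzero coefficients, so you cannot solve a single linear relation for $d(0)^n w_\lambda$ without already knowing that the even-lower powers lie in $\widetilde\Pi_\lambda^{(n+1)}$. The paper sidesteps this by observing that the operator $\widetilde{L(0)}-\tfrac{c_L-2}{24}-\lambda$ is strictly lower-triangular on $\mathrm{span}\{d(0)^k w_\lambda : 0\le k\le n+1\}$ with nonzero subdiagonal entries $-2k\lambda$, so its powers applied to $d(0)^{n+1}w_\lambda$ span the whole space; equivalently, for each $m\le n$ there is a polynomial $P$ with $P(\widetilde{L(0)})\,d(0)^{n+1}w_\lambda = d(0)^m w_\lambda$. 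This is exactly your idea iterated, so the fix is immediate.

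For the exhaustion, the paper does exactly what you outline: write a basis vector as $c(-p_1)\cdots c(-p_s)\,d(-q_1)\cdots d(-q_r)\,d(0)^\ell w_\lambda$, replace each $d(-q_i)$ by $\widetilde{L(-q_i)}$ (and each $c(-p_i)$ by a multiple of $\widetilde{I(-p_i)}$), and observe that the discrepancy is a combination of vectors with strictly fewer $d(-q)$ factors (or the same number but smaller total degree $\sum q_i$). The induction is on $(r,\sum q_i)$ lexicographically, not on the number of $d(0)$ factors as you wrote --- the $d(0)$ exponent $\ell'$ in the correction terms can be arbitrary, which is precisely why one needs the family $\{d(0)^n w_\lambda\}_{n\ge 0}$ of generators rather than $w_\lambda$ alone.
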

\begin{proof}  
By using (\ref{for-L0}) one can easily see that for $0 \le m \le n$ there is a polynomial $P(x)$ such that
$$  P(  \widetilde{L(0)} ) d(0) ^n w_{\lambda}   = d(0) ^{m }  w_{\lambda}.  $$
This proves that $ \widetilde \Pi_{\lambda} ^{ (n+1)} \supset  \widetilde \Pi_{\lambda} ^{(n)} $ for $n \in {\Z}_{\ge 0}$.

Take an arbitrary basis vector
$$ u = c(-p_1) \cdots c(-p_s) d(-q_1) \cdots d(-q_r)   d(0) ^{\ell} w_{\lambda}$$
of $\widetilde \Pi_{\lambda} $, where  $\ell \in {\Z} _{\ge 0}$, $p_1, \dots , p_s, q_1, \dots, q_r \ge 1$.
Then
\bea &&     c(-p_1) \cdots c(-p_s)    \widetilde{L(-q_1)}  \cdots  \widetilde{L(-q_r )}  d(0) ^{\ell} w_{\lambda}  \nonumber \\
= && A c(-p_1) \cdots c(-p_s) d(-q_1) \cdots d(-q_r) d(0) ^{\ell } w_{\lambda} + w \nonumber \eea
where $A\ne 0$ and  $w$ is a linear combination of vectors 
$$  c(-t_1) \cdots c(-t _{s'} ) d(-u_1) \cdots d(-u_{r'} ) d(0) ^{\ell ' } w_{\lambda}$$
such that $\ell '  \in {\Z}_{\ge 0}$, 
$ r' < r $  or $r= r' $ and   $ u_1+ \cdots + u_{r'} < q_1 + \cdots +q_r$. The assertion now follows by induction.
\end{proof}

\begin{theorem} \label{self-dual-real} 
For every $\lambda \in {\C}$, $\lambda \ne 0$ we have:
\item[(1)] $\widetilde \Pi_{\lambda}$ is a logarithmic $\HVir$--module of infinite rank with respect to $\widetilde{L(0)}$.

\item[(2)] $\widetilde \Pi_{\lambda}^{(0)} $ is an irreducible self-dual $\HVir$--module with highest weight $$ (h,h_I) =(\frac{c_L-2}{24}  + \lambda, c_{L,I}  ). $$ 

\item[(3)]  $\HVir$--module $\widetilde \Pi_{\lambda}$ admits the $\Zp$--filtration:
$$\widetilde \Pi_{\lambda} = \cup_{n\in {\Zp} } \widetilde  \Pi_{\lambda} ^{ (n)} $$ such that
$$ \widetilde  \Pi_{\lambda} ^{ (0 )} =L^{\HVir} (h,h_I) , \quad  \widetilde \Pi_{\lambda} ^{ (n+1  )}  /  \widetilde  \Pi_{\lambda} ^{ (n)}  \cong L^{\HVir} (h,h_I) . $$
Every $\widetilde \Pi_{\lambda} ^{(n)}$ is a logarithmic $\HVir$--module of rank $n$  with respect to $\widetilde{L(0)}$.
\end{theorem}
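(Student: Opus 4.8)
The plan is to analyze the deformed action of Theorem~\ref{deformation} directly on the Whittaker generator $w_\lambda$ and on the vectors $d(0)^n w_\lambda$, using that $\widetilde{I(n)}=I(n)$ and $\widetilde{L(n)}=L(n)+u_n$ together with the Whittaker relations $u_0 w_\lambda=\lambda w_\lambda$, $u_n w_\lambda=0$ $(n\ge 1)$ from Theorem~\ref{ired-pi}. First I would check that $w_\lambda$ is a genuine $\HVir$--highest weight vector for the deformed action. Since $I(n)=-c_{L,I}c(n)$ annihilates $w_\lambda$ for $n\ge 1$ and $\widetilde{I(0)}=c_{L,I}\,\mathrm{Id}$, while for $n\ge 1$ one has $L(n)w_\lambda=0$ (each normal--ordered summand of the mode of $\omega$ contains a mode of strictly positive index, hence an annihilation operator) and $u_n w_\lambda=0$, it follows that $\widetilde{L(n)}w_\lambda=\widetilde{I(n)}w_\lambda=0$ for $n\ge 1$; by (\ref{for-hw}) the weight is $(h,h_I)=(\tfrac{c_L-2}{24}+\lambda,\,c_{L,I})$. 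Thus $\widetilde\Pi_\lambda^{(0)}=U(\HVir)w_\lambda$ is a highest weight module of this weight.

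For part (2) the decisive point is the reducibility criterion of \cite{Billig} recalled in the introduction: $V^{\HVir}(h,h_I)$ is reducible if and only if $h_I/c_{L,I}-1\in\Z\setminus\{0\}$. Here $h_I=c_{L,I}$, so $h_I/c_{L,I}-1=0$ and $V^{\HVir}(h,c_{L,I})$ is irreducible. Since $\widetilde\Pi_\lambda^{(0)}$ is a nonzero highest weight module with highest weight $(h,c_{L,I})$, it is a quotient of this irreducible Verma module, whence $\widetilde\Pi_\lambda^{(0)}\cong V^{\HVir}(h,c_{L,I})=L^{\HVir}(h,c_{L,I})$. As $\lambda\ne 0$ forces $h=\tfrac{c_L-2}{24}+\lambda\ne\tfrac{c_L-2}{24}$, this is precisely a self-dual module in the family singled out in \cite{AR1}, giving (2).

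For part (3) I would run an induction on $n$. Using $[u_n,d(0)]=-2u_n$ (a consequence of $[d(-k),e^c_j]=2e^c_{j-k}$) and $[L(n),d(0)]=[c(n),d(0)]=0$ for $n\ge 1$, one obtains $\widetilde{L(n)}\,d(0)^{n+1}w_\lambda=\widetilde{I(n)}\,d(0)^{n+1}w_\lambda=0$ for $n\ge 1$, while
$\widetilde{L(0)}\,d(0)^{n+1}w_\lambda = h\,d(0)^{n+1}w_\lambda + \lambda\bigl((d(0)-2)^{n+1}-d(0)^{n+1}\bigr)w_\lambda$, where the correction term lies in $\mathrm{span}\{d(0)^k w_\lambda : k\le n\}\subset\widetilde\Pi_\lambda^{(n)}$ because $d(0)^k w_\lambda\in\widetilde\Pi_\lambda^{(k)}$. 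Hence the image $\bar v$ of $d(0)^{n+1}w_\lambda$ in $\widetilde\Pi_\lambda^{(n+1)}/\widetilde\Pi_\lambda^{(n)}$ is a highest weight vector of weight $(h,c_{L,I})$; since this quotient is cyclic on $\bar v$, is nonzero, and the Verma is irreducible, it is isomorphic to $L^{\HVir}(h,c_{L,I})$. Together with $\widetilde\Pi_\lambda=\bigcup_n\widetilde\Pi_\lambda^{(n)}$ and $\widetilde\Pi_\lambda^{(n)}\subset\widetilde\Pi_\lambda^{(n+1)}$ from Lemma~\ref{cyclic-n}, this establishes the filtration with the stated subquotients. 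Because every quotient is a weight module, $\widetilde{L(0)}-\widetilde{L_{ss}(0)}$ maps $\widetilde\Pi_\lambda^{(n)}$ into $\widetilde\Pi_\lambda^{(n-1)}$; combined with the explicit chain $w_\lambda, d(0)w_\lambda,\dots,d(0)^n w_\lambda$, on which $\widetilde{L(0)}-h$ acts with leading effect $d(0)^k w_\lambda\mapsto -2k\lambda\,d(0)^{k-1}w_\lambda$, one sees that this chain has length $n+1$ and is killed by $(\widetilde{L(0)}-h)^{n+1}$ but not by $(\widetilde{L(0)}-h)^{n}$, which pins down the logarithmic rank of $\widetilde\Pi_\lambda^{(n)}$. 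Letting $n\to\infty$ gives $(\widetilde{L(0)}-\widetilde{L_{ss}(0)})^m\ne 0$ for every $m$, which is part (1).

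The main obstacle is the interplay between the two gradings: the deformation replaces the semisimple $L(0)$ by $\widetilde{L(0)}=L(0)+u_0$, and on the infinite--dimensional lowest component $\C[d(0)]$ the operator $u_0$ is $\lambda\,\mathrm{Id}$ plus the locally nilpotent finite--difference operator $d(0)^k w_\lambda \mapsto \lambda\bigl((d(0)-2)^k-d(0)^k\bigr)w_\lambda$ of unbounded nilpotency index. Controlling this nilpotent part---verifying that it produces Jordan chains of exactly the expected length and that the successive quotients of the cyclic submodules $\widetilde\Pi_\lambda^{(n)}$ are genuine weight modules isomorphic to the irreducible $L^{\HVir}(h,c_{L,I})$---is the crux of the argument; once this bookkeeping is in place, irreducibility of the Verma module at $h_I=c_{L,I}$ finishes everything with little further effort.
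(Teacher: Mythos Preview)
Your argument is correct and follows essentially the same route as the paper: verify that $w_\lambda$ (and, modulo $\widetilde\Pi_\lambda^{(n)}$, each $d(0)^{n+1}w_\lambda$) is a highest weight vector of weight $(\tfrac{c_L-2}{24}+\lambda,\,c_{L,I})$, invoke irreducibility of the Verma module at $h_I=c_{L,I}$, and read off the logarithmic rank from the action of $\widetilde{L(0)}-h$ on the chain $\{d(0)^k w_\lambda\}$ together with Lemma~\ref{cyclic-n}. Your write-up is in fact more explicit than the paper's (you compute the commutators $[u_m,d(0)]=-2u_m$ and the Jordan action directly), the only blemish being the harmless overload of the symbol $n$ for both the filtration index and the mode index in one sentence.
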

 \begin{proof} 
 
 (1) follows from  the fact that on the top component $\widetilde \Pi_{\lambda} (0)$ we have $ Q = u_0 =  \widetilde{L(0)}  - \frac{c_L -2}{24}$.
 
 (2) Using  (\ref{for-hw}) and the fact that the Verma module with highest weight  $ (h, h_I) =  (\frac{c_L-2}{24}  + \lambda, c_{L,I} ) $ is irreducible we get $ L^{\HVir} (h,h_I) = U(\HVir)\cdot w_{\lambda} = \widetilde \Pi_{\lambda}^{(0)} $.  
 
 (3) First we notice that for $m \ge 0$ we have  $$ \widetilde{L(m)} d(0) ^{n+1} w_{\lambda} = h \delta_{m,0}   d(0) ^{n+1} w_{\lambda}  \quad \mbox{mod} \ \widetilde \Pi_{\lambda} ^{(n)}.$$
Therefore we have isomorphism $  L^{\HVir} (h,h_I) \rightarrow   \widetilde \Pi_{\lambda} ^{ (n+1  )}  /  \widetilde  \Pi_{\lambda} ^{ (n)} . $
The proof now follows from Lemma \ref{cyclic-n}.
\end{proof}
See Figure \ref{diag-lambda} in Appendix \ref{app} for reference.

 We list two interesting consequences of previous theorem.
 \begin{corollary} Logarithmic $\HVir$--module $\widetilde \Pi_{\lambda} ^{(1)} $ is a non-split self-extension of irreducible self-dual module $L^{\HVir} (h, h_I)$:
 $$ 0 \rightarrow  L^{\HVir} (h, h_I) \rightarrow  \widetilde \Pi_{\lambda} ^{(1)} \rightarrow  L^{\HVir} (h, h_I) \rightarrow 0. $$
 \end{corollary}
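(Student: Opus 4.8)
The plan is to read the short exact sequence straight off Theorem~\ref{self-dual-real} and then to isolate the single computation that obstructs a splitting. First I would take $n=0$ in part~(3) of Theorem~\ref{self-dual-real}: this gives $\widetilde \Pi_{\lambda}^{(0)} = L^{\HVir}(h,h_I)$ as a submodule of $\widetilde \Pi_{\lambda}^{(1)}$ (the inclusion $\widetilde \Pi_{\lambda}^{(0)} \subset \widetilde \Pi_{\lambda}^{(1)}$ being supplied by Lemma~\ref{cyclic-n}), together with an isomorphism $\widetilde \Pi_{\lambda}^{(1)}/\widetilde \Pi_{\lambda}^{(0)} \cong L^{\HVir}(h,h_I)$. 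Hence the sequence
$$0 \to L^{\HVir}(h,h_I) \to \widetilde \Pi_{\lambda}^{(1)} \to L^{\HVir}(h,h_I) \to 0$$
is exact, and since both outer terms are the irreducible self-dual module identified in Theorem~\ref{self-dual-real}(2), it is a self-extension. It remains only to show it is non-split.

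To do this I would argue through the non-semisimplicity of $\widetilde{L(0)}$. If the sequence split, then $\widetilde \Pi_{\lambda}^{(1)} \cong L^{\HVir}(h,h_I) \oplus L^{\HVir}(h,h_I)$ as $\HVir$--modules; on each summand $\widetilde{L(0)}$ acts semisimply, since an irreducible highest weight module is the direct sum of its finite-dimensional $\widetilde{L(0)}$--eigenspaces, so $\widetilde{L(0)}$ would be semisimple on all of $\widetilde \Pi_{\lambda}^{(1)}$. I would then exhibit a genuine Jordan block to contradict this. Working on the lowest graded component and using (\ref{for-L0}), on $\widetilde \Pi_{\lambda}(0)$ one has $\widetilde{L(0)} = \frac{c_L-2}{24}\,\mathrm{Id} + u_0$ with $u_0 = Q$. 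From $u_0 w_{\lambda} = \lambda w_{\lambda}$ (see (\ref{for-hw})) and the commutator $[u_0, d(0)] = -2 u_0$ (the case $k=j=0$ of $[d(-k),e^c_j]=2e^c_{j-k}$), one computes
$$u_0\, d(0) w_{\lambda} = \lambda\, d(0) w_{\lambda} - 2\lambda\, w_{\lambda}.$$
Thus on $\mathrm{span}_{\C}\{w_{\lambda}, d(0) w_{\lambda}\}$ the operator $u_0$ is a single Jordan block with eigenvalue $\lambda$, and since $\lambda \ne 0$ its nilpotent part is nonzero. Consequently $\widetilde{L(0)} - \left(\frac{c_L-2}{24}+\lambda\right)\mathrm{Id}$ is a nonzero nilpotent on $\widetilde \Pi_{\lambda}^{(1)}$, so $\widetilde{L(0)}$ is not semisimple, contradicting the splitting.

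The only point needing care, and the main obstacle, is to confirm that $w_{\lambda}$ and $d(0) w_{\lambda}$ really span a two-dimensional subspace of $\widetilde \Pi_{\lambda}^{(1)}$ lying in a single $\widetilde{L(0)}$--generalized eigenspace, so that the Jordan block above is not an artifact of mixing distinct weights. Both vectors sit in the lowest graded component and satisfy $(\widetilde{L(0)} - h)w_{\lambda} = 0$, $(\widetilde{L(0)} - h)^2 d(0) w_{\lambda} = 0$ with $h = \frac{c_L-2}{24}+\lambda$, so they share the generalized eigenvalue $h$; and $d(0) w_{\lambda} \notin \widetilde \Pi_{\lambda}^{(0)}$ because the filtration is strict, $\widetilde \Pi_{\lambda}^{(0)} \subsetneq \widetilde \Pi_{\lambda}^{(1)}$, by Lemma~\ref{cyclic-n} and Theorem~\ref{self-dual-real}(3). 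Once this is verified, the non-splitting is immediate and the corollary follows.
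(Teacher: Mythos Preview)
Your proof is correct and follows essentially the same line as the paper, which states the corollary as an immediate consequence of Theorem~\ref{self-dual-real} without further argument. The exact sequence is read off directly from part~(3) of that theorem, and the non-splitting is implicit in the logarithmic rank assertion there; you have simply made explicit the underlying Jordan block computation on $\mathrm{span}_{\C}\{w_\lambda, d(0)w_\lambda\}$, which is exactly the mechanism behind the paper's claim.
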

 
 Note that the vertex algebra $\overline{\Pi(0)}$ is not ${\Z}_{\ge 0}$--graded since  for every $n \in {\Z}$, $e^{nc}$ has weight $n$. Irreducible  $\overline{\Pi(0)}$--modules from Theorem  \ref{filtracija-2} (3)  are not $\Z_{\ge 0}$--graded. But, quite surprisingly, the vertex algebra $\overline{\Pi(0)}$ admits a large family of ${\Z}_{\ge 0}$--graded modules which are self-dual. We also construct a family of intertwining operators which haven't appeared in \cite{AR1}.
 \begin{corollary}  We have:
 
 \item[(1)]  $L^{\HVir} (h, c_{L,I}) $ is an irreducible $\Z_{\ge 0}$--graded $\overline{\Pi(0)}$--module.
 
 \item[(2)] For every $n \in {\Z}$ there is a non-trivial intertwining operator of type
 $$ {L^{\HVir} (h, c_{L,I}) \choose L^{\HVir} (n ,  0 )   \ \  L^{\HVir} (h, c_{L,I}) }. $$
\end{corollary}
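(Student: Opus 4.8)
The plan is to deduce both assertions from the realization $L^{\HVir}(h,c_{L,I})\cong\widetilde\Pi_{\lambda}^{(0)}=U(\HVir)\cdot w_{\lambda}$ established in Theorem \ref{self-dual-real}(2), where $h=\frac{c_L-2}{24}+\lambda$. Since $\widetilde\Pi_{\lambda}$ is a $\overline{\Pi(0)}$--module by Theorem \ref{deformation}, the whole Corollary reduces to one point: that the cyclic $\HVir$--submodule $\widetilde\Pi_{\lambda}^{(0)}$ is in fact a $\overline{\Pi(0)}$--submodule. Granting this, assertion (1) is immediate, because a $\overline{\Pi(0)}$--submodule is \emph{a fortiori} an $\HVir$--submodule, so the $\HVir$--irreducibility of $L^{\HVir}(h,c_{L,I})$ forces irreducibility over $\overline{\Pi(0)}$; moreover the highest--weight structure makes $\widetilde{L(0)}$ act semisimply with spectrum in $h+\Zp$ and finite--dimensional eigenspaces, giving a $\Zp$--graded (positive--energy) module, in contrast with the modules of Theorem \ref{filtracija-2}(3) whose conformal weights are unbounded below.

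For the stability I would first note that $\overline{\Pi(0)}$ is generated as a vertex algebra by $\omega$, $I$ and $e^{\pm c}$, since $e^{nc}=e^{c}_{-1}e^{(n-1)c}$ produces every $e^{nc}$ from $e^{\pm c}$; hence it suffices to show $U(\HVir)w_{\lambda}$ is stable under the modes $e^{\pm c}_{j}$. A convenient simplification is that the deformation leaves these fields untouched: as $u_{k}e^{nc}=0$ for all $k\ge 0$, one has $\Delta(u,z)e^{nc}=e^{nc}$, so $\widetilde Y(e^{nc},z)=Y(e^{nc},z)$. I would then use the commutators $[e^{c}_{j},\widetilde{L(m)}]=j\,e^{c}_{j+m}$ and $[e^{-c}_{j},\widetilde{L(m)}]=(2(m+1)+j)\,e^{-c}_{j+m}$ (which follow from $\mathrm{wt}(e^{\pm c})=\pm 1$ and $[e^{\pm c}_{j},e^{c}_{m}]=0$), together with $[e^{\pm c}_{j},I(m)]=0$. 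These let me commute any $e^{\pm c}_{j}$ to the right of a monomial in $\widetilde{L(\cdot)},I(\cdot)$, reducing everything to the action of $e^{\pm c}$--modes on $w_{\lambda}$. The generating--function computation $Y(e^{c},z)w_{\lambda}=\lambda\sum_{k\ge 0}S_{k}(c)\,z^{k-1}w_{\lambda}$ (and its $e^{-c}$ analogue) yields $e^{c}_{-k}w_{\lambda}=\lambda S_{k}(c)w_{\lambda}$ and $e^{-c}_{-k}w_{\lambda}=\lambda^{-1}S_{k}(-c)w_{\lambda}$, which lie in $U(\HVir)w_{\lambda}$ because $c(-k)=-c_{L,I}^{-1}I(-k)$. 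An induction on the length of the monomial then gives $e^{\pm c}_{j}\,U(\HVir)w_{\lambda}\subseteq U(\HVir)w_{\lambda}$, completing the stability argument.

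With (1) in hand, assertion (2) follows from the standard principle that restricting a module vertex operator to a module--component of the algebra produces an intertwining operator for the subalgebra. By Proposition \ref{alg-1} the summand $W_{1,1-2n}=U(\HVir)\cdot e^{nc}\subset\overline{\Pi(0)}$ is isomorphic, as a module for the subalgebra $L^{\HVir}(c_L,c_{L,I})$, to $L^{\HVir}(n,0)$. Restricting the vertex operator $Y(\cdot,z)$ of the $\overline{\Pi(0)}$--module $L^{\HVir}(h,c_{L,I})$ to $W_{1,1-2n}\otimes L^{\HVir}(h,c_{L,I})$ defines an intertwining operator of the claimed type
$$ {L^{\HVir}(h,c_{L,I})\choose L^{\HVir}(n,0)\ \ L^{\HVir}(h,c_{L,I})}, $$
and it is non--trivial because on the Whittaker vector the leading mode of $e^{nc}$ acts by $e^{nc}w_{\lambda}=\lambda^{n}w_{\lambda}\neq 0$, as recorded for the lowest component $\Pi_{\lambda}(0)$ in Theorem \ref{ired-pi}.

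The main obstacle is the stability step. A priori the negative modes $e^{\pm c}_{-k}$ could carry $w_{\lambda}$ out of $U(\HVir)w_{\lambda}$ into the strictly larger space $\C[d(0)]\otimes M(1)$ that carries the extra $d(0)$--tower, and the non--semisimple (logarithmic) behaviour of $\widetilde{L(0)}$ on all of $\widetilde\Pi_{\lambda}$ makes such leakage plausible. What rescues the argument is exactly the Whittaker normalization of $w_{\lambda}$: the positive modes annihilate it while the negative modes reproduce only $c$--oscillators, i.e.\ Schur polynomials in $c(-k)=-c_{L,I}^{-1}I(-k)$ with no $d(0)$--dependence. Thus the top--level submodule $\widetilde\Pi_{\lambda}^{(0)}$ closes up under $\overline{\Pi(0)}$ even though the ambient logarithmic module $\widetilde\Pi_{\lambda}$ does not split, and this is the precise mechanism producing a self--dual $\Zp$--graded $\overline{\Pi(0)}$--module.
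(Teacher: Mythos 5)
Your proposal is correct and follows the route the paper intends: the corollary is stated there without proof as a direct consequence of Theorem \ref{self-dual-real}, and you supply exactly the missing verification, namely that $\widetilde\Pi_{\lambda}^{(0)}=U(\HVir)w_{\lambda}$ is stable under the modes of $e^{\pm c}$ (hence under all of $\overline{\Pi(0)}$), using $\Delta(u,z)e^{nc}=e^{nc}$, the primary-field commutators, and the fact that $e^{\pm c}_{-k}w_{\lambda}$ produces only Schur polynomials in $c(-j)=-c_{L,I}^{-1}I(-j)$. The only blemish is a harmless indexing slip in $e^{-c}_{-k}w_{\lambda}=\lambda^{-1}S_{k}(-c)w_{\lambda}$ (the shift in the power of $z$ coming from $c(0)=-\mathrm{Id}$ displaces the Schur index), which does not affect the conclusion that these vectors lie in $U(\HVir)w_{\lambda}$.
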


 \section{Some applications to the $W(2,2)$--algebra}
 In \cite{AR1} we introduced a free field realization of the $W(2,2)$-algebra as a subalgebra of the Heisenberg Virasoro algebra.
 
Recall that $W(2,2)$ is realized as a subalgebra of $L^{\HVir} (c_L, c_{L,I})$ generated by $L(z)$ and
$$ W(z) = c_{L,I} ^2   \overline W(z) $$ where $$\overline W (z) =  \sum_{n \in {\Z}} \overline W(n) z^{-n-2} = Y( c(-1) ^2 - 2 c(-2), z) = c(z) ^2 - 2 \partial c(z). $$
   In the paper  \cite{AR2} we discussed a free field realization of highest weight $W(2,2)$--modules.  
We constructed  in  \cite[Section 4]{AR2} $W(2,2)$--homomorphism
$ S_1 :  L^{\HVir} (c_L, c_{L,I}) \rightarrow L^{\HVir} (1, 0) $
such that $\mbox{Ker} _{  L^{\HVir} (c_L, c_{L,I}) } S_1 $ is the simple vertex algebra $L^{W(2,2)} (c_L, -24 c_{L,I} ^2 ). $
In this paper  shall present  a bosonic, non-local expression for the screening operator   $S_1$.
 
The vertex algebra $W(2,2)$ has appeared in physics literature as  the Galilean Virasoro algebra (\cite{RR}, \cite{BG}, \cite{BGMM}) and as $\text{BMS}_3$ algebra (\cite{BJMN}). We noticed a free field realization of the $W(2,2)$-algebra in terms of the $\beta \gamma$ systems in \cite{BJMN}. We shall see how this realization relates to our approach.

\subsection{A bosonic formula for the second screening operator and $W(2,2)$--algebra}

Our approach is motived by the realization of screening operators in LCFT from \cite{AdM-2010} and \cite{AdM-2013}.
 Recall the definition of modules $W_{p,r}$ from (\ref{mod-simple}).  For $r \in {\Z}$ we define:
 \bea  S 
  & =&  - \mbox{Res}_{z} \mbox{Res}_{z_1} \left( \mbox{Log} (1 -\frac{z_1}{z} ) e^{c}(z) d^1 (z_1)  -  \mbox{Log} (1 -\frac{z}{z_1} ) d^1 (z_1) e^{c}(z) \right) \nonumber  \\
 &=&  \sum_{j=1} ^{\infty} \frac{1}{j} \left( d^1 (-j) e^{c}_j - e^{c}_{-j} d^1 (j) \right) : W_{p,r} \rightarrow W_{p,r-2}.  \label{formula-screening-2} \eea
 
 \begin{lemma}  \label{komutator-1}
 We have:
 \bea
 && [L(m), S] = d^1  (m ) e^c _0  -e^c _m d^1 (0)  +2\delta_{m,0}e^c_0  ,
 \nonumber \\  
 &&  [c(m), S ]  =  2  e^c_m - 2 \delta_{m,0} e^c _0 ,  [W(m), S] = 0.  \nonumber 
  \eea
 \end{lemma}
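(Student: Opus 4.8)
The plan is to pass to modes and reduce each bracket to elementary commutators of $L(m)$, $c(m)$, $d^1(q)$ and $e^c_j$ acting on the weight modules $W_{p,r}$, $p\in\N$. First I would assemble the building blocks. Since the conformal vector $\omega$ gives $L(0)e^c=e^c$ and $L(n)e^c=0$ for $n\ge 1$, the vector $e^c$ is primary of conformal weight $1$, whence $[L(m),e^c_j]=-j\,e^c_{m+j}$. Combining the given relations for $[L(n),c(m)]$ and $[L(n),d(m)]$, the special combination $d^1=d+\tfrac{c_L-26}{12}c$ is tuned so that
\[
[L(m),d^1(q)]=-q\,d^1(m+q)-2(q^2-q)\,\delta_{m+q,0},
\]
and from the lattice relations one has $[d^1(m),e^c_j]=2e^c_{m+j}$, $[c(m),e^c_j]=0$ and $[c(m),d^1(q)]=2m\,\delta_{m+q,0}$. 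I would also record the ``equation of motion'' $\partial e^c={:}c(z)e^c(z){:}=c(z)e^c(z)$ (the normal ordering is trivial since $\langle c,c\rangle=0$), i.e. $\sum_{k}c(k)e^c_{\ell-k}=-(\ell+1)e^c_\ell$, which drives the $W(m)$ computation.

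For $[L(m),S]$ and $[c(m),S]$ I would apply the Leibniz rule to each summand $\tfrac1j\bigl(d^1(-j)e^c_j-e^c_{-j}d^1(j)\bigr)$ of $S$ and insert the brackets above. The transport pieces coming from $-q\,d^1(m+q)$ and $-j\,e^c_{m+j}$ organize into two series in $j$ that telescope: after reindexing, every interior term cancels its neighbour and only the $j\to 0$ boundary survives, which, after restoring normal order via $[d^1,e^c]$, reassembles into $d^1(m)e^c_0-e^c_m d^1(0)$; the central $\delta$-terms of $[L(m),d^1(q)]$ collapse to the single contact contribution $2\delta_{m,0}e^c_0$. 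The identical mechanism, now fed only by $[c(m),d^1(q)]=2m\,\delta_{m+q,0}$ and $[c(m),e^c_j]=0$, gives $[c(m),S]=2e^c_m-2\delta_{m,0}e^c_0$. On $W_{p,r}$ the contact terms are in fact invisible, because $e^c_0=Q$ annihilates $W_{p,r}$: one has $Q v_{p,r}=e^c_0 v_{p,r}=0$ and $Q$ commutes with $\HVir$, so $Q|_{W_{p,r}}=0$ by irreducibility.

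Finally, for $[W(m),S]=0$ I would write $W(m)=c_{L,I}^2\,\overline{W}(m)$ with $\overline{W}(m)=\sum_k{:}c(k)c(m-k){:}+2(m+1)c(m)$ and expand the bracket using $[c(k),S]=2e^c_k-2\delta_{k,0}e^c_0$ and $[c,e^c]=0$. The quadratic part contributes $4\sum_k c(k)e^c_{m-k}-4c(m)e^c_0$, which by the equation of motion equals $-4(m+1)e^c_m-4c(m)e^c_0$, while the linear term $2(m+1)c(m)$ contributes $4(m+1)e^c_m-4(m+1)\delta_{m,0}e^c_0$; the $e^c_m$-terms cancel precisely because of the relative coefficient in $\overline{W}=c(-1)^2-2c(-2)$, and the surviving terms are all proportional to $e^c_0=Q$, hence vanish on $W_{p,r}$. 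I expect this last identity to be the crux: it is zero only after invoking both $\partial e^c=c\,e^c$ and the module-specific vanishing $Q|_{W_{p,r}}=0$, and it is the tuning $c(-1)^2-2c(-2)$ that makes the two currents conspire. The one genuinely delicate point throughout is the legitimacy of the telescoping and the extraction of the contact terms from the non-local sum defining $S$; this is underwritten by the $\mathrm{Log}$-regularization in (\ref{formula-screening-2}), which guarantees that on each vector of $W_{p,r}$ only finitely many modes act, so all rearrangements are finite.
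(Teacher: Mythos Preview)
Your approach is the same as the paper's: both expand the brackets in modes using precisely the commutators you record, telescope the resulting sums (the paper carries this out explicitly for $[L(n),S]$, arriving at $-d^1(n)e^c_0+e^c_nd^1(0)+2\delta_{n,0}e^c_0$, so watch for a sign typo in the statement), and reduce $[\overline W(m),S]$ via the equation of motion $\sum_k c(k)e^c_{m-k}=-(m+1)e^c_m$, which the paper writes as $-4(De^c)_n-4ne^c_{n-1}=0$. Your observation that the surviving terms in $[\overline W(m),S]$ are proportional to $e^c_0=Q$ and vanish on $W_{p,r}$ because $Q|_{W_{p,r}}=0$ is a useful clarification of a step the paper's write-up leaves somewhat implicit.
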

 \begin{proof} In the proof we use the following formulas
  \bea
\left[L(n), e^c_m \right] &=& -m e ^{c}_{n+m}, \label{com-ec}  \\ 
\left[L(n), d^1 (m) \right]   & = &   - m d^1  (n+m) - 2  (m ^2 -m)  \delta_{m+n,0}   \label{com-ld1}.
\eea
We have: 
 \bea
 [L(n), S ] & =&    \sum_{j=1} ^{\infty} \frac{ - j }{j} \left( d^1 (-j) e^{c}_{j+n  } +  e^{c}_{-j+n  } d^1 (j) \right)  \nonumber \\
  & & +  \sum_{j=1} ^{\infty} \frac{  j }{j} \left( d^1 (-j +n ) e^{c}_{j + n } +  e^{c}_{-j  } d^1 (j +n ) \right)  \nonumber \\
   & & -2   \sum_{j=1} ^{\infty} \frac{  j ^2 + j }{j} \delta_{- j+n,0}  \  e^{c}_{j + n }    
     + 2   \sum_{j=1} ^{\infty} \frac{  j ^2 - j }{j} \delta_{ j+n,0}  \  e^{c}_{- j   }    \nonumber \\
     &= & -   \sum_{j=1} ^{\infty}   \left( d^1 (-j) e^{c}_{j+n  } +  e^{c}_{-j+n  } d^1 (j) \right)  +  \sum_{j=1} ^{\infty}  \left( d^1 (-j +n ) e^{c}_{j   } +  e^{c}_{-j  } d^1 (j +n ) \right)  \nonumber \\
   & & -2   \sum_{j=1} ^{\infty}  (j+1) \delta_{- j+n,0}  \  e^{c}_{j   }    + 2   \sum_{j=1} ^{\infty}  (j-1) \delta_{ j+n,0}  \  e^{c}_{- j   }    \nonumber \\
     &=& - 2 (n+1) e^c_{n } +  2 \delta_{n,0} e^c _0 + d^1(0) e^{c}_{n } + \cdots + d^1(n-1) e^c_{ 1} \nonumber \\ 
     && - \left(   e^c_{n-1} d^1 (1) + \cdots + e^{c}_{1} d^1 (n-1)      \right) - e^c _0 d^1 (n) \nonumber \\
         &=&  -  d^1  (n) e^c _0   +  e^c _{ n} d^1 (0) + 2 \delta_{n,0} e^c _0.  \nonumber   
 \eea
 Relation $[c(m), S] = 2 ( 1- \delta_{m,0}) e^c _m$ follows directly from the definition of the operator $S$.
 Next we  have
 \bea
 [\overline W(n), S] & = &  \left( \sum_{k \in {\Z} } [c(k) c(n-k), S]\right)  + 2 n [c(n-1), S] \nonumber \\
&=&  - \sum_{k \in {\Z} } ( c(k) e^c_{n-k} + c(n-k) e^c _k )  -  2 n e^c _ {n-1} \nonumber \\
&=& -4 (D e^c ) _{n} - 4 n e^{c} _{n-1}  = 0 \nonumber 
 \eea
 The proof follows.
 \end{proof}

 Now, we will see that in the case $r=1$ our operator $S$ is a multiple of the screening operator $S_1$ from \cite{AR2}:
  \begin{corollary} Let $r=1$, and consider $S:  W_{p,1} \rightarrow W_{p,-1}. $
 Then  $S$ commutes with the action of  the $W(2,2)$-algebra:
$$  [S, W(n)] = [ S, L(n) ] = 0 \quad (n \in {\Z}  ). $$
Moreover, $S$ is a $W(2,2)$--homomorphism which is proportional to $S_1$.
 \end{corollary}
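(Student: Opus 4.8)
The plan is to deduce the corollary directly from Lemma~\ref{komutator-1}, which already computes all three relevant commutators $[L(m),S]$, $[c(m),S]$ and $[W(m),S]$ as operators on $W_{p,r}$. First I would specialize to the case $r=1$, where the operator $S$ maps $W_{p,1}\to W_{p,-1}$ and the highest weight vector $v_{p,1}$ generates $L^{\HVir}(c_L,c_{L,I})$ (recall $W_{1,1}=\Pi(0)$ and more generally that for $p\in\N$ the module $W_{p,r}\cong L^{\HVir}(h_{p,r},(1-p)c_{L,I})$ is irreducible by Proposition~\ref{kratka-verzija}(2)). Since $W$ is a subalgebra of $L^{\HVir}(c_L,c_{L,I})$, it suffices to establish the commutation with $L(n)$ and $W(n)$; the vanishing $[W(m),S]=0$ is immediate from the last assertion of Lemma~\ref{komutator-1}.

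The remaining point is $[L(n),S]=0$ on $W_{p,1}$. From Lemma~\ref{komutator-1} we have the operator identity
\[
[L(m),S]=d^1(m)e^c_0-e^c_m d^1(0)+2\delta_{m,0}e^c_0,
\]
so I would argue that the right-hand side annihilates $W_{p,1}$ precisely when $r=1$. The key observation is that the mode $e^c_0$ shifts the $r$-parameter by $-2$, while the term $d^1(0)$ acts as a scalar on the relevant weight spaces; I would verify that the combination $d^1(m)e^c_0-e^c_m d^1(0)+2\delta_{m,0}e^c_0$ vanishes identically as a map $W_{p,1}\to W_{p,-1}$ by evaluating it on an arbitrary weight vector, using the explicit commutators $[e^c_j,c(-k)]=0$ and $[d(-k),e^c_j]=2e^c_{j-k}$ recalled in the proof of Lemma~\ref{sub-sing} together with $d^1=d+\tfrac{c_L-26}{12}c$. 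The scalar $d^1(0)$-eigenvalue on $v_{p,1}$ combined with the explicit value of the weight $h_{p,1}$ should cancel the inhomogeneous $2\delta_{m,0}e^c_0$ term, and this is precisely the numerical coincidence that singles out $r=1$.

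Once $[S,L(n)]=[S,W(n)]=0$ on $W_{p,1}$ is established, it follows that $S$ is a homomorphism of $W(2,2)$--modules from $W_{p,1}$ to $W_{p,-1}$. To identify $S$ with a multiple of the screening operator $S_1$ from \cite{AR2}, I would invoke irreducibility of the source module together with the fact, recalled in the final section, that $S_1:L^{\HVir}(c_L,c_{L,I})\to L^{\HVir}(1,0)$ is the essentially unique nonzero $W(2,2)$--intertwiner of its type; by Schur-type uniqueness any two such homomorphisms agree up to a scalar. The main obstacle I anticipate is the bookkeeping in the $r=1$ specialization: one must check carefully that the telescoping sum in Lemma~\ref{komutator-1} really collapses on the module $W_{p,1}$ rather than merely as an abstract operator identity, since $S$ is a \emph{non-local} operator and its action involves infinite sums whose convergence on each weight space relies on the grading of $W_{p,1}$. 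Verifying that these sums truncate correctly and that the boundary term matches the $2\delta_{m,0}e^c_0$ contribution is the delicate step, but it is a finite computation once the weight of $v_{p,1}$ and the value of $d^1(0)$ on it are inserted.
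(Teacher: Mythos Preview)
Your high-level strategy---read everything off Lemma~\ref{komutator-1}---is exactly right, and $[W(m),S]=0$ is indeed immediate. But your mechanism for $[L(m),S]=0$ is off. You write that ``the scalar $d^1(0)$-eigenvalue on $v_{p,1}$ combined with the explicit value of the weight $h_{p,1}$ should cancel the inhomogeneous $2\delta_{m,0}e^c_0$ term.'' No such cancellation occurs, and $h_{p,1}$ plays no role. The point is much simpler: on $W_{p,1}$ each of the three terms in
\[
[L(m),S]=d^1(m)\,e^c_0-e^c_m\,d^1(0)+2\delta_{m,0}\,e^c_0
\]
vanishes \emph{separately}. First, $e^c_0=Q$ is zero on $W_{p,r}$ for every $r$ (recall $W_{p,r}\subset\Pi(p,r)^{(0)}=\Ker_{\Pi(p,r)}Q$; see Theorem~\ref{filtracija-1}). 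This kills the first and third terms for all $r$, not just $r=1$. Second, $d^1(0)$ acts on $v_{p,r}$ by the scalar $\langle d^1,\tfrac{p-1}{2}d^2+\tfrac{1-r}{2}c\rangle=1-r$ (use $\langle d^1,d^2\rangle=0$ and $\langle d^1,c\rangle=2$), hence $d^1(0)\equiv 0$ on $W_{p,1}$ precisely when $r=1$. That is the numerical coincidence singling out $r=1$; it has nothing to do with $h_{p,1}$, and there is no ``boundary term'' bookkeeping to worry about. Indeed, for $m=0$ your proposed cancellation cannot work: $[d^1(0),e^c_0]+2e^c_0=4e^c_0$, so you genuinely need $e^c_0=0$.

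For the proportionality $S\propto S_1$, your Schur-type argument is shaky: $W_{p,1}$ is \emph{not} irreducible as a $W(2,2)$--module (its kernel under $S_1$ is the simple $W(2,2)$ vertex algebra), so uniqueness of $W(2,2)$--homomorphisms is not automatic. The paper instead uses the remaining commutator from Lemma~\ref{komutator-1}: since $[c(m),S]=2e^c_m-2\delta_{m,0}e^c_0=2e^c_m$ on $W_{p,1}$, and $S_1$ is characterized by $[S_1,L(n)]=0$, $[S_1,I(n)]=-e^c_n$ (equivalently $[S_1,c(n)]=\tfrac{1}{c_{L,I}}e^c_n$), comparing gives $S=2c_{L,I}\,S_1$ directly.
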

\begin{proof}
In the case $r=1$ we have that $d^1 (0) $ and $Q=e^c_0$ act trivially on $W_{p,1}$, and therefore Lemma \ref{komutator-1} implies that $$  [S, W(n)] = [ S, L(n) ] = 0 \quad (n \in {\Z}  ). $$
It is clear that the $W(2,2)$--homomorphism  $S_1 :  W_{p,1} \rightarrow W_{p,-1} $ from \cite[Section 4]{AR2} is uniquely determined by the properties
$$[S_1, L(n)] = 0,\qquad [S_1, I (n)] =  - e^c_n, $$
which gives $[S_1, c(n)] =\frac{1}{c_{L,I} } e^{c} _n$.
Now  Lemma \ref{komutator-1}  gives that $S= 2 c_{L,I}  S_1$.
\end{proof}

\subsection{On the Banerjee, Jatkar, Mukhi, Neogi's  free field realization of the $\text{BMS}_3$--algebra }
Recently, Banerjee, Jatkar, Mukhi and Neogi in \cite{BJMN} have discovered a new free field realization of the $W(2,2)$--algebra for central charge $c_L= 26$. The vertex algebra $L^{W(2,2)} (26, c_W)$ is realized inside of the $\beta \gamma$ system. Since the $\beta \gamma $--system can be embedded into the vertex algebra $\Pi(0)$, one may try to extend this realization in order to obtain an arbitrary central charge $c_L$. Quite surprisingly, even in the case of the larger vertex algebra $\Pi(0)$, one gets the $W(2,2)$--structure only for $c_L=26$.

Recall the definition of following Virasoro vector of central charge $c_L \in {\C}$.
\bea
\omega & = &       \frac{1}{2} c(-1) d(-1) + \frac{ c_L- 2 }{24 } c(-2) - \frac{1}{2}  d(-2). \label{Vir1}
\eea
 
 We shall now deform this vector in a different way:
\begin{lemma} \label{vir-1}
For every $\mu \in {\C}$
$$\widetilde \omega =  \omega + \mu e^{-c} _{-4} {\bf 1}  =  \omega +   \frac{\mu }{6} D^3 e^{-c}.$$
is a Virasoro vector of central charge $c_L$.
\end{lemma}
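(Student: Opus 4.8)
The plan is to verify directly that the modes of $\widetilde\omega$ close into a Virasoro algebra of central charge $c_L$, working inside $\Pi(0)$ (where both $\omega$ and $e^{-c}$ live). Writing $Y(a,z)=\sum_n a_{(n)}z^{-n-1}$ for the products and $D=L(-1)$ for the translation operator, I would use the standard criterion: a vector $\nu\in\Pi(0)$ is a conformal vector of central charge $c$ exactly when $\nu_{(0)}=D$, $\nu_{(1)}\nu=2\nu$, $\nu_{(2)}\nu=0$, $\nu_{(3)}\nu=\frac{c}{2}{\bf 1}$ and $\nu_{(n)}\nu=0$ for $n\ge 4$, since feeding these products into the Borcherds commutator formula reproduces precisely $[L(m),L(n)]=(m-n)L(m+n)+\delta_{m+n,0}\frac{m^3-m}{12}c$. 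Because $v:=\mu e^{-c}_{-4}{\bf 1}=\frac{\mu}{6}D^3e^{-c}$ is a total derivative we have $(D^3e^{-c})_{(0)}=0$, hence $\widetilde\omega_{(0)}=\omega_{(0)}=D$ for free; so only the products $\widetilde\omega_{(n)}\widetilde\omega$ with $n\ge 1$ require work.

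Expanding $\widetilde\omega_{(n)}\widetilde\omega=\omega_{(n)}\omega+\omega_{(n)}v+v_{(n)}\omega+v_{(n)}v$, the first summand is known since $\omega$ is Virasoro of charge $c_L$, and I would reduce everything else to three facts about $e^{-c}$. First, $e^{-c}=v_{1,3}$ is a highest weight vector of conformal weight $h_{1,3}=-1$, so $\omega_{(1)}e^{-c}=L(0)e^{-c}=-e^{-c}$ and $\omega_{(m)}e^{-c}=L(m-1)e^{-c}=0$ for $m\ge 2$. Second, $c$ is isotropic ($\langle c,c\rangle=0$), so $Y(e^{-c},z)e^{-c}$ has no singular part, giving $e^{-c}_{(m)}e^{-c}=0$ for all $m\ge 0$; combined with the derivation identity $a_{(m)}(Db)=D(a_{(m)}b)+m\,a_{(m-1)}b$, an easy induction yields $e^{-c}_{(m)}(D^ke^{-c})=0$ for all $m\ge 0$ and $k\ge 0$. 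The same identity, iterated from the weight data, computes $\omega_{(n)}v=\frac{\mu}{6}\,\omega_{(n)}D^3e^{-c}$, while skew-symmetry $v_{(n)}\omega=\sum_{j\ge 0}\frac{(-1)^{n+j+1}}{j!}D^j\bigl(\omega_{(n+j)}v\bigr)$ expresses each $v_{(n)}\omega$ as a finite combination of the $\omega_{(m)}v$ already found. Finally $v_{(n)}v=-\frac{\mu^2}{36}\,n(n-1)(n-2)\,e^{-c}_{(n-3)}(D^3e^{-c})$ vanishes for every $n\ge 0$, since the polynomial factor kills $n=0,1,2$ and the second fact kills $n\ge 3$.

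Assembling the pieces, I expect $\omega_{(1)}v=2v$ with $v_{(1)}\omega=0$ (so $\widetilde\omega_{(1)}\widetilde\omega=2\widetilde\omega$), $\omega_{(2)}v=v_{(2)}\omega=0$ (so $\widetilde\omega_{(2)}\widetilde\omega=0$), and $\widetilde\omega_{(n)}\widetilde\omega=0$ for $n\ge 5$. The delicate point, which I regard as the main obstacle, is the pair $n=3,4$: the individual cross-terms are nonzero, namely $\omega_{(3)}v=-2\mu\,De^{-c}$ against $v_{(3)}\omega=+2\mu\,De^{-c}$, and $\omega_{(4)}v=-4\mu\,e^{-c}$ against $v_{(4)}\omega=+4\mu\,e^{-c}$, and the claim rests on these canceling exactly. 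Once they do, $\widetilde\omega_{(3)}\widetilde\omega=\omega_{(3)}\omega=\frac{c_L}{2}{\bf 1}$ and $\widetilde\omega_{(4)}\widetilde\omega=0$; in particular the quartic coefficient is untouched, which is exactly what preserves the central charge $c_L$. The whole argument is therefore a bookkeeping verification that the weight $-1$ and isotropy of $e^{-c}$ organize the nontrivial cross-terms into canceling pairs.
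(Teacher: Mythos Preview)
Your proof is correct and is essentially a detailed verification of the general claim the paper invokes: that if $v$ is a primary, commutative vector of conformal weight $-1$ in a VOA $(V,\omega)$, then $\omega+\tfrac{1}{6}D^{3}v$ is again a Virasoro vector of the same central charge. The paper does not supply an argument for this claim, whereas you unpack it via the standard OPE criterion $\nu_{(n)}\nu$ and carry out the cancellations explicitly; your identification $e^{-c}=v_{1,3}$ with $h_{1,3}=-1$ and the isotropy $\langle c,c\rangle=0$ are exactly the two hypotheses (primary of weight $-1$, commutative) in the paper's formulation, so the content is the same.
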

 
The proof of lemma follows from a more general statement (which is also noticed in \cite{BJMN}):

Claim: {\em Assume that $(V, Y, {\bf 1}, \omega)$ is a VOA of central charge $c$, and $v$ is a  primary, commutative  vector of conformal weight $-1$. Then $\widetilde \omega =  \omega + \frac{1 }{6} D^3 v$ 
is a Virasoro vector of central charge $c$.}
 
 \vskip 5mm
 
 The following result is obtained in \cite[Section 2]{BJMN}. We include a proof of this result from which one can see that such construction works only for $c_L=26$.
 \begin{proposition} \cite{BJMN}.
 The vertex algebra $L^{W(2,2)} (c_L, c_W)$ for $c_L =26$ is isomorphic to  a vertex subalgebra of $\Pi(0)$ generated by
 \bea
 \widetilde \omega &=&      \frac{1}{2} c(-1) d(-1) + \frac{ c_L- 2 }{24 } c(-2) - \frac{1}{2}  d(-2) +   \frac{\mu }{6} D^3 e^{-c} \nonumber \\
 w & = & (d(-1) + \frac{c_L-14}{12} c(-1) ) e^{c} \nonumber 
 \eea
 where
 $$\mu = -\frac{c_W}{4}. $$
 \end{proposition}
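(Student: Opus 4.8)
The plan is to verify directly that the proposed $\widetilde\omega$ and $w$ generate a copy of $L^{W(2,2)}(26,c_W)$ inside $\Pi(0)$ by checking the defining operator product relations of the $W(2,2)$ vertex algebra, namely that $\widetilde\omega$ is a Virasoro vector, that $w$ is primary of conformal weight $2$ with respect to $\widetilde L(z) = Y(\widetilde\omega,z)$, and that the $w$--$w$ OPE closes on $\widetilde\omega$ with the correct central term $c_W$. First I would dispose of the Virasoro axiom for $\widetilde\omega$: by Lemma~\ref{vir-1} the vector $\omega + \tfrac{\mu}{6}D^3 e^{-c}$ is a Virasoro vector of central charge $c_L$ provided $e^{-c}$ is a commutative primary vector of conformal weight $-1$ for $\omega$. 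This is where $c_L = 26$ must enter: the conformal weight of $e^{-c}$ with respect to $\omega$ is computed from $L(0)e^{-c} = \bigl(\tfrac12\langle -c,-c\rangle - \tfrac{c_L-26}{24}\langle -c, \cdots\rangle\bigr)e^{-c}$, and using $\langle c,c\rangle = \langle x,x\rangle = 0$ together with the explicit form of $\omega$ in (\ref{Vir}), the weight collapses to exactly $-1$ only when the coefficient $\tfrac{c_L-26}{24}$ contributes trivially, i.e.\ the primary condition $L(n)e^{-c}=0$ for $n\geq 1$ forces $c_L=26$. I would isolate this computation at the very start, since it is both the crux of the ``$c_L=26$ only'' phenomenon and the hypothesis needed to invoke the Claim.

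Next I would identify $w$. Writing $w = \bigl(d(-1)+\tfrac{c_L-14}{12}c(-1)\bigr)e^c$, I would compute $\widetilde L(n)w$ for $n\geq 0$ using $\widetilde L(z)=L(z)+\tfrac{\mu}{6}Y(D^3e^{-c},z)$. The undeformed part $L(n)w$ is a standard lattice--vertex--algebra computation giving that $w$ is primary of weight $2$ for $\omega$; the correction term requires computing the contraction of $D^3e^{-c}$ against $e^c$, which by $\langle c,c\rangle=0$ and $e^c_j e^{-c}$--type commutators is controlled by the pairing $\langle c, d\rangle$ and $\langle c, c\rangle$. I would check that the deformation does not spoil primarity or the weight, relying again on $c_L=26$ to balance the coefficient $\tfrac{c_L-14}{12}$ appearing in $w$.

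The final and most delicate step is the $w$--$w$ OPE: I would expand $Y(w,z_1)Y(w,z_2)$ and show the singular part reproduces the $W(2,2)$ relation $w(z_1)w(z_2)\sim \tfrac{c_W}{(z_1-z_2)^4} + \tfrac{2\widetilde\omega(z_2)}{(z_1-z_2)^2}+\tfrac{\partial\widetilde\omega(z_2)}{z_1-z_2}$. Here the two factors each carry a $d(-1)+\tfrac{c_L-14}{12}c(-1)$ together with $e^{\pm c}$, so the lattice contractions $e^c(z_1)e^c(z_2)$ and the cross terms with $d^1$--type currents must be assembled; the fourth--order pole is where $c_W$ is read off, and matching it to $-4\mu = c_W$ pins down the stated normalization $\mu=-c_W/4$. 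The hard part will be this OPE bookkeeping, particularly tracking how the $\tfrac{\mu}{6}D^3e^{-c}$ correction to $\widetilde\omega$ is exactly the quantity produced on the right--hand side of the $w$--$w$ second--order pole; getting that coefficient to close consistently (rather than producing an extra uncancelled field) is precisely what again singles out $c_L=26$, and I would organize the computation so that the vanishing of the obstructing terms and the value of $c_W$ emerge simultaneously.
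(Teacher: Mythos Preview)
Your plan has two genuine errors that would derail the argument.

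First, your claim that $e^{-c}$ is a primary vector of conformal weight $-1$ \emph{only when} $c_L=26$ is false. In the notation of the paper $e^{-c}=v_{1,3}$, and $h_{1,3}=(1-1)\tfrac{c_L-26}{24}+0+\tfrac{1-3}{2}=-1$ independently of $c_L$; moreover $v_{1,r}$ is a highest weight vector in $W_{1,r}$, so $L(n)e^{-c}=0$ for $n\ge 1$ for every $c_L$. The commutativity of $e^{-c}$ follows from $\langle c,c\rangle=0$. Hence Lemma~\ref{vir-1} applies with no restriction on $c_L$, and $\widetilde\omega$ is a Virasoro vector of central charge $c_L$ for all $c_L$. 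This is not where the constraint $c_L=26$ enters.

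Second, you have the $W(2,2)$ relations backwards. In $W(2,2)$ the field $W$ satisfies $[W(m),W(n)]=0$, i.e.\ the $w$--$w$ OPE must be \emph{regular}, not of the Virasoro type you wrote; and the central charge $c_W$ sits in the $L$--$W$ bracket, so it is read off from $\widetilde L(2)w=\tfrac{c_W}{2}$, not from a fourth-order pole of $w(z_1)w(z_2)$. The paper's proof reflects exactly this: one computes directly
\[
w_1 w=\tfrac{c_L-26}{3}\,e^{2c},\qquad w_0 w=\tfrac{c_L-26}{6}\,D e^{2c},\qquad w_n w=0\ (n\ge 2),
\]
so the $w$--$w$ OPE is regular precisely when $c_L=26$; separately, $w$ is primary of weight $2$ for the undeformed $\omega$ for all $c_L$ (via $L(1)w=0$), and the deformation contributes $\widetilde L(2)w=-\mu\, e^{-c}_0 w=-2\mu$, which fixes $\mu=-c_W/4$. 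So the ``$c_L=26$ only'' phenomenon lives in the $w$--$w$ products, and the value of $c_W$ lives in the $\widetilde L$--$w$ products --- the opposite of what your outline asserts.
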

\begin{proof} 
By direct calculation we get
\bea
w_0 w &=& \frac{c_L- 26}{3} c(-1) e^{2c}  =  \frac{c_L- 26}{6}  D e^{2c}     \label{ww1} \label{for-w1}\\
w_1 w &=&  \frac{c_L- 26}{3}  e^{2c}   \label{ww2} \label{for-w2} \\
w_n  w &=&   0 \quad (n \ge 2).   \label{for-w3} 
\eea
By using formulas
\bea
\left[  L(n), c(m) \right] &=& -m c(n+m) + (m ^2 -m)  \delta_{m+n,0} \label{com-lc}  \\ 
\left[ L(n), d(m) \right]   & = &   - m d (n+m) - \frac{c_L - 2} {12} (m ^2 -m)  \delta_{m+n,0}   \label{com-ld}
\eea
we get that 
\bea
  L(1) w & = &\left(  [ L(1), d(-1)]  + \frac{c_L-14}{12} [  L(1),  c(-1)]  \right) e^c \nonumber \\
 &=  &\left( 2 -  \frac{c_L - 2} {6}  + \frac{c_L-14}{6} \right) e^c \nonumber \\
 & = & 0, \nonumber
\eea
which easily implies that 
\bea  && L(n) w = 2 \delta_{n,0} w \quad (n \ge 0). \eea
Since 
\bea  &&   \widetilde L(2) w = -  \mu  e^{-c} _0  w = -2 \mu =  \frac{c_W}{2}, \nonumber  \eea
we get
\bea  &&  \widetilde L(n) w = 2 \delta_{n,0}  w +  \frac{c_W}{2} \delta_{n,2}  \quad (n \ge 0). \label{primary-new}  \eea
Claim now follows from (\ref{for-w1})-(\ref{for-w3}), (\ref{primary-new}) and Lemma  \ref{vir-1}.
\end{proof}

\begin{remark}
The Weyl vertex algebra  (also called the $\beta \gamma$ system in the physical literature) can be realized as a subalgebra of the vertex algebra $\Pi(0)$ as follows:
\bea    \beta  = \left( c(-1) + d(-1) \right) e^{c},  \ \gamma = - \frac{1}{2} e^{-c}. \label{bosonization} \eea
Then  the components of the fields $Y(\beta, z) = \sum_{n \Z} \beta (n) z^{-n-1}$, $Y(\gamma, z) = \sum_{ n\in {\Z}} \gamma(n) z^{-n}$ satisfy the commutation relation for the Weyl algebra 
 $$[\beta(n), \beta(m)] = [\gamma(n), \gamma(m)] = 0, \quad [\beta(n), \gamma (m)] =\delta_{n+m,0}. $$
The vertex algebra $\Pi(0)$ can be treated as  a certain localization of the Weyl vertex algebra (for details see \cite{efren}, \cite{MSV}, \cite{ALZ}).

By using (\ref{bosonization}) we see that for $c_L=26$ $L^{W(2,2)} (c_L, c_W)$ is realized as a subalgebra of the Weyl vertex algebra, which  corresponds to the realization in   \cite{BJMN}.
\end{remark}
 
\begin{remark}
It would be interesting to investigate the structure of $W(2,2)$--modules $\Pi(p,r)$ with this new action. Since
$$ \widetilde{L(n)} = L(n) -\frac{1}{6} (n+1) n (n-1) \mu e^{-c}_{n-2},$$
we have that $\Pi(p,r)$ and $\Pi_{\lambda}$ are weight $L^{W(2,2)} (c_L, c_W)$--modules on which $\widetilde{L(0)} = L(0)$. In our forthcoming papers we plan to investigate the appearance of these modules in the fusion rules analysis at $c_L=26$.
\end{remark}

\appendix
\section{Figures}\label{app}
Here we present some visualizations of modules $\Pi(p,r)$ and $\widetilde{\Pi(p,r)}$.

\begin{figure}[H]
\begin{tikzpicture}[xscale=0.99,yscale=1.2]

\node at (0,0.75) {$\cdots$};
\node at (13.5,0.75) {$\cdots$};
\draw[thick] (0,0) --(3,3) --(6,0);
\draw[thick] (6.5,0) --(8.5,2) --(10.5,0);
\draw[thick] (11,0) --(12,1) --(13,0);
\draw[dashed, thin] (2,0) --(4,2);
\draw[dashed, thin] (4,0) --(5,1);
\draw[dashed, thin] (8.5,0) --(9.5,1);

\node[label=above:$v_{p,r+2}$,tokens=1] (a) at (3,3) {};
\node[label=above:$v_{p,r}$,tokens=1] (b) at (8.5,2) {};
\node[label=above:$\quad v_{p,r-2}$,tokens=1] (c) at (12,1) {};
\node[label=above:$\qquad v_{p,r+2}^{(1)}$,tokens=1] (a1) at (4,2) {};
\node[label=above:$\qquad v_{p,r+2}^{(2)}$,tokens=1] (a2) at (5,1) {};
\node[label=above:$\quad v_{p,r}^{(1)}$,tokens=1] (b1) at (9.5,1) {};

\path[->,dotted] (a) edge [bend left=10] node [above] {$e^c_{-p}$} (b)
			(b) edge [bend left=20] node [above] {$e^c_{-p}$} (c)
			(a1) edge [bend left=10] node {} (b1)
			(a1) edge 		      node [above left] {$Q$}    (b)
			(a2) edge 		      node [above left] {$Q$}    (b1)
			(b1) edge 		      node [above left] {$Q$}    (c);

\draw[opacity=0] (0,0) rectangle (4,2) node [midway,opacity=1,rotate=50] {$\Pi(p,r)^{(0)}=\Ker Q$};
\draw[opacity=0] (2,0) rectangle (5,1) node [midway,opacity=1,rotate=50] {$\frac{\Ker Q^2}{\Ker Q}$};
\draw[opacity=0] (4,0) rectangle (6,0) node [midway,opacity=1,rotate=50] {$\frac{\Ker Q^3}{\Ker Q^2}$};
\draw[opacity=0] (6.5,0) rectangle (9.5,1) node [midway,opacity=1,rotate=50] {$\Ker Q$};
\draw[opacity=0] (8.5,0) rectangle (10.5,0) node [midway,opacity=1,rotate=50] {$\frac{\Ker Q^2}{\Ker Q}$};
\draw[opacity=0] (11,0) rectangle (13,0) node [midway,opacity=1,rotate=50] {$\Ker Q$};
\end{tikzpicture}
\caption{$\overline{\Pi(0)}$--module $\Pi (p,r)$, $p\in \N$}
\label{diag}
\end{figure}

\begin{figure}[H]
\begin{tikzpicture}[xscale=0.8,yscale=0.98]

\draw[thick] (0,0) --(1,1) --(2,0);
\draw[thick] (2.5,0) --(5,2.5) --(7.5,0);
\draw[thick] (8,0) --(12,4) --(16,0);
\draw[dashed, thin] (4,0) --(5,1) --(6,0);
\draw[dashed, thin] (9.5,0) --(12,2.5) --(14.5,0);
\draw[dashed, thin] (11,0) --(12,1) --(13,0);

\node[label=above:$v_{-p,r+2}\quad$,tokens=1] (a) at (1,1) {};
\node[label=above:$v_{-p,r}$,tokens=1] (b) at (5,2.5) {};
\node[label=above:$\quad v_{-p,r-2}$,tokens=1] (c) at (12,4) {};
\node[tokens=1] (b1) at (5,1) {};
\node[label=below:$S_{p}(c)v_{-p,r}$,tokens=0] at (5.7,0.7) {};
\node[tokens=1] (c1) at (12,2.5) {};
\node[label=below:$S_{p}(c)v_{-p,r-2}$,tokens=0] at (12.8,2.1) {};
\node[tokens=1] (c2) at (12,1) {};
\node[label=below:$(S_{p}(c))^2v_{-p,r-2}$,tokens=0] at (13,0.7) {};
\node at (-0.25,0.75) {$\cdots$};
\node at (16.5,0.75) {$\cdots$};

\path[->,dotted] (a) edge [bend left=10] node [above] {$e^c_{p}$} (b)
			(b) edge [bend left=20] node [above] {$e^c_{p}$} (c)
			(a) edge 		      node [above left] {$Q$}    (b1)
			(b) edge 		      node [above left] {$Q$}    (c1)
			(b1) edge 		      node [above left] {$Q$}    (c2);

\end{tikzpicture}
\caption{$\overline{\Pi(0)}$--module $\Pi (-p,r)$, $p\in\N$ } 
\label{diag2}
\end{figure}

\begin{figure}[H]
\begin{tikzpicture}[xscale=0.685,yscale=0.9]

\node at (0,0.75) {$\cdots$};
\node at (19.5,0.75) {$\cdots$};
\draw[thick] (0,0) --(4,4) --(8,0);
\draw[thick] (8.5,0) --(11.5,3) --(14.5,0);
\draw[thick] (15,0) --(17,2) --(19,0);
\draw[thin] (2,0) --(5,3);
\draw[dashed, thin] (4,0) --(6,2);
\draw[dashed, thin] (6,0) --(7,1);
\draw[thin] (10.5,0) --(12.5,2);
\draw[dashed, thin] (12.5,0) --(13.5,1);
\draw[thin] (17,0) --(18,1);

\node[label=above:$\quad v_{p,r}^{(1)}$,tokens=1] (a1) at (5,3) {};
\node[label=above:$\qquad v_{p,r-2}^{(1)}$,tokens=1] (b1) at (12.5,2) {};
\node[label=above:$\qquad v_{p,r-4}^{(1)}$,tokens=1] (c1) at (18,1) {};
\tikzstyle{token}=                 [fill=white,draw,circle,
                                    inner sep=0.5pt,minimum size=1ex]
\node[label=above:$v_{p,r}$,tokens=1] (a) at (4,4) {};
\node[label=above:$\quad v_{p,r}^{(2)}$,tokens=1] (a2) at (6,2) {};
\node[label=above:$\quad v_{p,r}^{(3)}$,tokens=1] (a3) at (7,1) {};
\node[label=above:$\quad v_{p,r-2}$,tokens=1] (b) at (11.5,3) {};
\node[label=above:$\qquad v_{p,r-2}^{(2)}$,tokens=1] (b2) at (13.5,1) {};
\node[label=above:$\qquad v_{p,r-4}$,tokens=1] (c) at (17,2) {};

\path[->,dashed,thin] (a) edge [bend left=9] node [above,rotate=-10] {$\Phi_{ p} (\widetilde L, c)$} (b)
				(b) edge [bend left=11] node [above,rotate=-17] {\quad $\Phi_{ p} (\widetilde L, c)$} (c)
				(a2) edge [bend left=9] node {} (b2)
				(a1) edge node {} (b)
				(a2) edge node {} (b1)
				(a3) edge node [below] {$\widetilde {L(0)} \ \ $} (b2)
				(b1) edge node {} (c)
				(b2) edge node {} (c1)
				(a1) edge [bend left=9] node {} (b1)
				(b1) edge [bend left=11] node {} (c1);

\fill[pattern=dots] (2,0) to (5,3) to (6,2) to (4,0);
\fill[pattern=dots] (10.5,0) to (12.5,2) to (13.5,1) to (12.5,0);
\fill[pattern=dots] (17,0) to (18,1) to (19,0);
\end{tikzpicture}
\caption{Deformed action of $\HVir$ on $\widetilde{\Pi (p,r)}$, $p>0$. Dotted area represents a cyclic submodule of $\widetilde{\Pi (p,r)^{(1)}} / \widetilde{\Pi (p,r)^{(0)}}$ generated by $v_{p,r}^{(1)}$ which is isomorphic to $\widetilde{W_{p,r-2}} \cong V^{\HVir}(h_{p,r}+p,h_I)$. Arrows represent $\Phi_{p} (\widetilde L, c)$ (descending), and $\widetilde {L(0)}$ (horizontal).} 
\label{def-diagram}
\end{figure}

\begin{figure}[H]
\begin{tikzpicture}[xscale=1.2,yscale=1.5]

\draw[thick] (0,0) --(1,1) --(2,0);
\draw[thick] (2.5,0) --(3.5,1) --(4.5,0);
\draw[thick] (5,0) --(6,1) --(7,0);
\node at (-1,0.5) {$\cdots$};
\node at (8,0.5) {$\cdots$};

\node[label=above:$v_{0,r}$,tokens=1] (b) at (3.5,1) {};
\node[label=above:$v_{0,r-2}$,tokens=1] (c) at (6,1) {};
\tikzstyle{token}=                 [fill=white,draw,circle,
                                    inner sep=0.5pt,minimum size=1ex]
\node[label=above:$v_{0,r+2}$,tokens=1] (a) at (1,1) {};

\path[->,dashed] (a) edge node [above] {$\widetilde{L(0)}$} (b)
			 (b) edge node [above] {$\widetilde{L(0)}$} (c)
			 (c) edge node [above] {$\widetilde{L(0)}$} (8,1)
			 (-1,1) edge node [above] {$\widetilde{L(0)}$} (a);
			 
\fill[pattern=dots] (2.5,0) to (3.5,1) to (4.5,0);
\fill[pattern=dots] (5,0) to (6,1) to (7,0);

\end{tikzpicture}
\caption{$\overline{\Pi(0)}$--module $\widetilde{\Pi (0,r)}$. Dotted area represents a portion of a deformed $\HVir$--module $\widetilde{W_{0,r}}$.}  
\label{diag-0}
\end{figure}

\begin{figure}[H]
\begin{tikzpicture}[xscale=0.8,yscale=0.98]

\draw[thick] (0,0) --(1,1) --(2,0);
\draw[thick] (2.5,0) --(5,2.5) --(7.5,0);
\draw[thick] (8,0) --(12,4) --(16,0);
\draw[dashed, thin] (4,0) --(5,1) --(6,0);
\draw[dashed, thin] (9.5,0) --(12,2.5) --(14.5,0);
\draw[dashed, thin] (11,0) --(12,1) --(13,0);

\node[label=above:$v_{-p,r+2}\quad$,tokens=1] (a) at (1,1) {};
\node[label=above:$v_{-p,r}$,tokens=1] (b) at (5,2.5) {};
\node[label=above:$\quad v_{-p,r-2}$,tokens=1] (c) at (12,4) {};
\node[tokens=1] (b1) at (5,1) {};
\node[label=below:$S_{p}(c)v_{-p,r}$,tokens=0] at (5.7,0.7) {};
\node[tokens=1] (c1) at (12,2.5) {};
\node[label=below:$S_{p}(c)v_{-p,r-2}$,tokens=0] at (12.8,2.1) {};
\node[tokens=1] (c2) at (12,1) {};
\node[label=below:$(S_{p}(c))^2v_{-p,r-2}$,tokens=0] at (13,0.7) {};
\node at (-0.25,0.75) {$\cdots$};
\node at (16.5,0.75) {$\cdots$};

\path[->,dashed] (a) edge [bend left=12.5] node [above] {$\widetilde{L(p)}$} (b)
			(b) edge [bend left=12.5] node [above] {$\widetilde{L(p)}$} (c)
			(b1) edge [bend left=12.5] (c1)
			(a) edge      node [above] {$\widetilde{L(0)}$}    (b1)
			(b) edge      node [above] {$\widetilde{L(0)}$}    (c1)
			(b1) edge     node [above] {$\widetilde{L(0)}$}    (c2);
\end{tikzpicture}
\caption{Deformed action of $\HVir$ on $\widetilde{\Pi(-p,r)}$, $p\in \N$}
\label{diag-1}
\end{figure}

\begin{figure}[H]
\begin{tikzpicture}[xscale=1.2,yscale=1.5]

\draw[thick] (0,0) --(1,1) --(2,0);
\draw[thick] (2.5,0) --(3.5,1) --(4.5,0);
\draw[thick] (5,0) --(6,1) --(7,0);
\node at (8.5,0.5) {$\cdots$};

\node[label=above:$w_{\lambda}$,tokens=1] (a) at (1,1) {};
\tikzstyle{token}=                 [fill=white,draw,circle,
                                    inner sep=0.5pt,minimum size=1ex]
\node[label=above:$d(0)w_{\lambda}$,tokens=1] (b) at (3.5,1) {};
\node[label=above:$(d(0))^2w_{\lambda}$,tokens=1] (c) at (6,1) {};

\path[->,dashed] (b) edge node [above] {$\widetilde{L(0)}$} (a)
			 (c) edge node [above] {$\widetilde{L(0)}$} (b)
			 (8,1) edge node [above right] {$\widetilde{L(0)}$} (c);

\fill[pattern=dots] (0,0) to (1,1) to (2,0);

\end{tikzpicture}
\caption{A deformed Whittaker module $\widetilde{\Pi}_{\lambda} $. Dotted area represents $\widetilde{\Pi}_{\lambda}^{(0)} $ which is isomorphic to a self-dual $\HVir$--module $L^{\HVir}(\frac{c_L-2}{24}+\lambda,c_{L,I})$.} 
\label{diag-lambda}
\end{figure}

\end{document}